\newcommand{\zetavnbhd}{v} 
\newcommand{\zetawnbhdone}{w} 
\newtheorem{thmx}{Theorem}
\title{Modular operads and the nerve theorem}
\author[P. Hackney]{Philip Hackney}
\address{Department of Mathematics\\ University of Louisiana at Lafayette\\ Lafayette, LA 70504-3568 USA}
\email{philip@phck.net} 
\urladdr{http://phck.net}
\thanks{The first author acknowledges the support of Australian Research Council Discovery Project grant DP160101519.}
\author[M. Robertson]{Marcy Robertson}
\address{School of Mathematics and Statistics \\ The University of Melbourne \\ Melbourne, Victoria, Australia}\email{marcy.robertson@unimelb.edu.au}
\author[D. Yau]{Donald Yau}
\address{Department of Mathematics\\
The Ohio State University at Newark\\
Newark, OH \\ USA}
\email{dyau@math.ohio-state.edu}
\date{April 26, 2020}
\begin{document}

\begin{abstract}
We describe a category of undirected graphs which comes equipped with a faithful functor into the category of (colored) modular operads.
The associated singular functor from modular operads to presheaves is fully faithful, and its essential image can be classified by a Segal condition.
This theorem can be used to recover a related statement, due to Andr\'e Joyal and Joachim Kock, concerning a larger category of undirected graphs whose functor to modular operads is not just faithful but also full.
\end{abstract}

\maketitle

The inclusion of the simplex category $\Delta$ into $\cat$,  the category of small categories, induces a fully faithful functor from $\cat$ into the category $\Set^{\Delta^{\oprm}}$ of presheaves, via the assignment $\mathbf{C} \mapsto \mathrm{Fun}(-, \mathbf{C})$.
It is classical that the essential image of this functor consists of those presheaves $X$ which satisfy a \emph{Segal condition}; that is, for every $n\geq 2$ the set $X_n$ can be described as an iterated pullback
\[
	X_n \cong \underbrace{X_1 \times_{X_0} X_1 \times_{X_0} \dots \times_{X_0} X_1}_n.
\]
To goal of this paper is to extend this story to the setting of modular operads.

A modular operad \cite{MR1601666} is an algebraic structure consisting of a sequence of $\Sigma_n$-sets $P(n)$, indexed on nonnegative integers $n$, together with 
\begin{itemize}
	\item `composition operations' $P(n) \times P(m) \to P(n+m-2)$, one for each pair of integers $(i,j) \in [1,n] \times [1,m]$ and
	\item `contraction operations' $P(n) \to P(n-2)$, one for each pair of integers $(i,j)$ with $0\leq i < j \leq n$.
\end{itemize}
This paper, along with its companion \cite{modular_paper_one}, center around a new category of graphs that permit a Segalic approach to the study of modular operads.
This category is a refined version (see Remark~\ref{refinement remark}) of a category of graphs studied by Andr\'e Joyal and Joachim Kock in \cite{JOYAL2011105}.

Our modular graphical category, called $\graphicalcat$, was developed in the companion paper \cite{modular_paper_one}.
The objects of this category are undirected, connected graphs with loose ends, while morphisms are given by `blowing up' vertices of the source into subgraphs of the target in a way that reflects iterated operations in a modular operad.
Regarding a graph as a \emph{colored} modular operad generated by its vertices, one should have a (faithful) functor $\graphicalcat \to \csm$ into the category of colored modular operads.
Our main theorem, which reappears later as Theorem~\ref{nerve_theorem}, is that (colored) modular operads can be characterized as certain objects in the category $\pregraphcat = \Set^{\graphicalcat^{\oprm}}$ of $\graphicalcat$-presheaves.

\begin{thmx}\label{nerve theorem one}
The functor $\graphicalcat \to \csm$ induces a fully-faithful functor $N : \csm \to \pregraphcat$.
The essential image of $N$ consists precisely of those presheaves which satisfy a strict Segal condition.
\end{thmx}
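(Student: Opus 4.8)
The plan is to realize $N$ as the nerve of a dense functor and then read off the Segal characterization from a colimit decomposition of the graphs. Writing $j \colon \graphicalcat \to \csm$ for the functor of the statement, I would set $N(P) = \csm(j(-),P)$; left Kan extension of $j$ along the Yoneda embedding then produces a left adjoint $\tau \dashv N$, the realization functor. Fully faithfulness of $N$ is equivalent to the counit $\tau N \Rightarrow \operatorname{id}_{\csm}$ being invertible, i.e.\ to the density of $j$: every colored modular operad $P$ should be the canonical colimit, indexed by the slice $\graphicalcat/P$, of the graphs mapping into it.

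Density rests on one structural input, which I expect to be the technical heart of the argument: for each graph $G$, the modular operad $jG$ is the colimit in $\csm$ of its \emph{elementary} subgraphs --- the stars $\star_v$, one per vertex $v$, glued along the edges they share. Granting this ``graphical decomposition,'' density reduces to the claim that every $P$ is the colimit of its own stars and edges, and this is essentially a restatement of the definition of a colored modular operad: edges detect the palette of colors, stars detect the $\Sigma_n$-equivariant operations, and the composition and contraction operations are exactly the structure maps recorded by the morphisms between stars and edges. Since stars and edges are themselves graphs, this reconstruction of $P$ as a colimit of graphs over it gives both fullness and faithfulness of $N$ at once.

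With the decomposition $jG \cong \operatorname{colim}(\text{stars and edges of } G)$ in hand, half of the image classification is formal: the representable $\csm(-,P)$ sends this colimit to a limit, so
\[
	N(P)(G) \;\cong\; \lim_{(\star \to G)} \, N(P)(\star),
\]
which is precisely the strict Segal condition, placing the essential image inside the Segal presheaves. Conversely, for a presheaf $X$ satisfying the strict Segal condition I would show the unit $X \to N\tau X$ is an isomorphism. The realization $\tau X$ is the modular operad with palette $X(\text{edge})$ and operations $X(\star_n)$, its structure maps induced by the morphisms of stars in $\graphicalcat$; the Segal condition guarantees both that this data assembles into a genuine modular operad and that $N\tau X$ evaluated on any $G$ returns the Segal limit, which is $X(G)$ by hypothesis. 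Together with fully faithfulness this pins the essential image down to exactly the Segal presheaves.

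The hard part will be the graphical decomposition underpinning density. One must verify that $jG$ is the colimit of its stars \emph{in $\csm$}, and this requires understanding how the contraction operations --- the feature distinguishing modular operads from cyclic operads --- interact with the gluing of stars along edges, and controlling the graph automorphisms produced by self-gluings, i.e.\ loops and positive genus. Managing these self-gluings compatibly with the symmetric group actions, in the colored setting where edge-colorings must be tracked throughout, is where I expect the real work to lie.
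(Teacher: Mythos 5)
Your framing is sound in two respects: full faithfulness of $N$ is indeed equivalent to density of $J:\graphicalcat\to\csm$, and your ``graphical decomposition'' is true --- the statement that $\modopgenned{G}$ is the colimit in $\csm$ of its stars and edges is, by Yoneda, exactly Lemma~\ref{nerve layers as equalizer}, which follows almost immediately from the freeness of $\modopgenned{G}$ and the pullback description of operations (Remark~\ref{remark nary operations pullback}). The genuine gap is the step you call ``essentially a restatement of the definition'': the claim that every $P$ is the colimit of its stars and edges, with ``the composition and contraction operations \dots\ recorded by the morphisms between stars and edges.'' This is false. In $\graphicalcat$ the only morphisms among elementary graphs are isomorphisms of stars, the arc inclusions $\exceptionaledge\to\medstar_S$, and the unit-type maps $\medstar_2\to\exceptionaledge$ and $\medstar_2\to\medstar_S$; composition and contraction are encoded by \emph{active} maps $\medstar_{\eth(G)}\to G$ whose codomains (two-vertex graphs, graphs with loops) are not elementary. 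Consequently the colimit of the elementary slice over $P$ is, up to the symmetry and unit identifications, the free modular operad on the underlying collection of $P$: the formal composite of two generators and the single generator named by their actual composite in $P$ are distinct elements of that colimit, and no morphism of the elementary diagram identifies them. The canonical map from this colimit to $P$ is surjective but not injective, so density cannot be reduced to the elementary slice --- the non-elementary graphs are precisely what carry the relations. The same failure infects your description of the realization: the structure maps of $\tau X$ cannot be ``induced by the morphisms of stars in $\graphicalcat$,'' because there are no such morphisms to induce them from; one must use the maps $X_G\to X_{\medstar_S}$ coming from active maps for arbitrary $G$, mediated by the Segal isomorphism $X_G\cong\hom(\segalcore[G],X)$.

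There is a second, related obstruction: the standard argument that density passes from a subcategory to a larger one containing it requires the larger subcategory to be \emph{full}, and $J:\graphicalcat\to\csm$ is not full (Example~\ref{example about graphicalcat not being full}); this is exactly why the paper notes that the Weber/Berger--Moerdijk--Weber nerve machinery is unavailable here, and even the full subcategory of $\csm$ spanned by the elementary objects fails to be dense, since a binary composition cannot be probed by maps out of a single star. The paper therefore proceeds quite differently from your plan: full faithfulness is proved directly (Proposition~\ref{nerve_fully_faithful}), using the pullback description of $P(S,\xi)$ together with naturality against active maps $\medstar_S\to G$ for \emph{all} graphs $G$; and for the essential image, given a Segal presheaf one constructs the associated modular operad by hand, defining the algebra structure map from the active maps (Definition~\ref{def of gamma restriction}) and then verifying the monad algebra axioms (Proposition~\ref{prop modular operad associated to X}). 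That verification of associativity is the real technical heart of the theorem. In other words, the difficulty sits exactly in the steps your proposal labels as formal, while the step you single out as hard --- controlling self-gluings and automorphisms in the decomposition of $\modopgenned{G}$ --- is the easy part.
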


Part of the content of this theorem is the description of the functor from graphs to modular operads.
In this paper, the color set of a modular operad is actually an \emph{involutive} color set, where color matching for composition and contraction operations are governed by the involution (similar to the situation for cyclic multicategories in \cite{MR3189430}).
For example, given a graph $G$, the associated modular operad has color set of cardinality $2|E(G)|$, with one color for each possible orientation on each edge.
If $e$ is an edge of $G$ joining two vertices $v$ and $w$, the generating operations $v$ and $w$ will be tagged with opposing orientations of the edge $e$, and so can be formally composed.

Modular operads as we define them here were first introduced (for the ground category $\Set$) in \cite{JOYAL2011105}, where they are called `compact symmetric multicategories.'
These were further studied in the thesis of Sophie Raynor \cite{Raynor:Thesis} and in \cite{Raynor:DLCSM}.
Most geometric examples of colored modular operads in the literature have been in the setting where the involution on color sets is trivial, as in \cite{Giansiracusa:MSMO}, \cite{HarrelsonVoronovZuniga:OCMSRAS}, and \cite{KaufmannWard:FC}.
A notable exception is \cite{Petersen:OSAGC}, which had a class of examples which were colored by involutive groupoids, rather than involutive sets.
On the other hand, Drummond-Cole and the first author studied colored cyclic operads with involutive set of colors in \cite{DrummondColeHackney:DKHCO}.
Working with involutive color sets had distinct homotopical advantages in that work, which were already clear in \cite[2.11]{doi:10.1112/blms.12232}.
But it had a further advantage: colored cyclic operads (in the sense of \cite{hry_cyclic}), colored operads, and colored dioperads can all be considered as special types of colored cyclic operads when allowing for involutive color sets.
Likewise, our more general notion of modular operad that we consider in this paper allows one to regard wheeled properads as a special case.

The category $\graphicalcat$ from the above is a subcategory of $\csm$, but it is not a full subcategory.
Instead, it is generated by morphisms that are local in nature, involving two or fewer vertices.
In the companion paper (see also Remark~\ref{refinement remark}), this restriction is used to show that $\graphicalcat$ admits a generalized Reedy structure \cite{bm_reedy} (allowing us to use the Reedy model structure on categories of diagrams), which may not be true for the full subcategory $\jkgraphcat$ of $\csm$ spanned by the graphs.
The second theorem of our paper (appearing later as Theorem~\ref{jk nerve theorem}), is the following.

\begin{thmx}[Joyal--Kock 2011]\label{nerve theorem two}
The full subcategory inclusion $\jkgraphcat \to \csm$ induces a fully-faithful functor $N_{JK} : \csm \to \prejkgraphcat$.
The essential image of $N_{JK}$ consists precisely of those presheaves which satisfy a strict Segal condition.
\end{thmx}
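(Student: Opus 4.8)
The plan is to deduce this statement from Theorem~\ref{nerve theorem one} by comparing the two presheaf categories along the subcategory inclusion. Since $\graphicalcat$ and $\jkgraphcat$ are both subcategories of $\csm$ spanning the same graphs, with $\graphicalcat$ obtained by discarding all but the locally-generated morphisms, there is a faithful comparison functor $j \colon \graphicalcat \to \jkgraphcat$ over $\csm$ which is the identity on objects. Restriction along $j$ yields $j^\ast \colon \prejkgraphcat \to \pregraphcat$, and because both nerves are computed as $G \mapsto \csm(G,-)$ one obtains a commuting triangle $j^\ast \circ N_{JK} = N$. I would then show that $N_{JK}$ corestricts to an equivalence onto the full subcategory $\mathrm{Seg}(\jkgraphcat) \subseteq \prejkgraphcat$ of presheaves satisfying the strict Segal condition; this packages together the two assertions of the theorem.

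The engine of the argument is the observation that the strict Segal condition, in either presheaf category, is phrased entirely in terms of a common subcategory $\graphicalcat_{\mathrm{el}}$ of elementary graphs --- the edges and the one-vertex stars together with the inclusions of edges into stars --- which sits inside both $\graphicalcat$ and $\jkgraphcat$ and which $j$ fixes. A presheaf is Segal precisely when its value on an arbitrary graph $G$ is the limit of its values over the elementary subgraphs of $G$; since $j$ is the identity on objects and on $\graphicalcat_{\mathrm{el}}$, the functor $j^\ast$ changes neither these values nor the indexing limit diagrams, so it preserves and reflects the Segal condition. First I would record that for any colored modular operad $P$ the presheaf $\csm(-,P)$ is Segal --- a map out of the free modular operad on $G$ is determined by, and freely assembled from, its restrictions to the vertices subject to matching along the edges --- so that $N_{JK}$ factors through $\mathrm{Seg}(\jkgraphcat)$ and $N$ through $\mathrm{Seg}(\graphicalcat)$. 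Next I would prove that $j^\ast$ restricts to an equivalence $\mathrm{Seg}(\jkgraphcat) \xrightarrow{\ \simeq\ } \mathrm{Seg}(\graphicalcat)$, both being equivalent by further restriction to the same category of elementary data on $\graphicalcat_{\mathrm{el}}$, compatibly with $j^\ast$.

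Granting these two points the theorem follows formally. By Theorem~\ref{nerve theorem one} the corestriction $N \colon \csm \to \mathrm{Seg}(\graphicalcat)$ is an equivalence; by the previous paragraph so is $j^\ast \colon \mathrm{Seg}(\jkgraphcat) \to \mathrm{Seg}(\graphicalcat)$; and the commuting triangle gives $j^\ast \circ N_{JK} = N$ after corestriction. Two-out-of-three for equivalences then forces $N_{JK} \colon \csm \to \mathrm{Seg}(\jkgraphcat)$ to be an equivalence, which says exactly that $N_{JK}$ is fully faithful with essential image the Segal presheaves. The main obstacle is the full and essentially surjective part of the equivalence $j^\ast|_{\mathrm{Seg}}$: one must show that every $\graphicalcat$-Segal presheaf extends uniquely to $\jkgraphcat$ and that natural transformations extend uniquely, i.e.\ that the extra, non-local morphisms of $\jkgraphcat$ act in a completely forced way on a Segal presheaf. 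This is precisely where one uses that the local morphisms generating $\graphicalcat$ already see every elementary decomposition, so that the action of an arbitrary $\jkgraphcat$-morphism is determined by its effect on elementary subgraphs; making this determination precise, and checking its consistency, is the technical heart of the proof.
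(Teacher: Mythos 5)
Your global strategy---factor $J$ as $I\circ\iota$, observe $\iota^*\circ N_{JK}=N$, check that $N_{JK}$ lands in Segal presheaves, prove that $\iota^*$ restricts to an equivalence between the full subcategories of Segal presheaves, and conclude by two-out-of-three---is sound in outline and close in spirit to the paper's reduction of Theorem~\ref{jk nerve theorem} to Theorem~\ref{nerve_theorem}; the preserve-and-reflect statement for the Segal condition is also correct (the paper gets it from the adjunction square \eqref{adjunctions diagram}). But the step you yourself flag as the technical heart---that $\iota^*$ restricted to Segal objects is an equivalence---is precisely where your proposal has a genuine gap, and the justification you offer for it is wrong. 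A category of Segal presheaves is \emph{not} ``equivalent by further restriction to the same category of elementary data'': the Segal condition determines the \emph{values} on all graphs from the values on elementary graphs, but not the \emph{functoriality}, which is exactly where the operadic composition and contraction live (compare $\Delta$: Segal simplicial sets are categories, while presheaves on $\{[0],[1]\}$ are mere graphs). Worse, the mechanism you suggest for forcing the action of a general $\jkgraphcat$-morphism---precompose with the elementary embeddings $\medstar_v\hookrightarrow G$ and invoke the $\graphicalcat$-structure---fails because the two categories do \emph{not} agree on morphisms \emph{out of} elementary graphs: $\jkgraphcat(\medstar_0,\exceptionaledge)$ is nonempty while $\graphicalcat(\medstar_0,\exceptionaledge)=\varnothing$, and a star can map in $\jkgraphcat$ onto an \'etale cover, as in Example~\ref{example about graphicalcat not being full}. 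So the composites $\medstar_v\to G\to G'$ are $\jkgraphcat$-morphisms whose action on a would-be extension is not determined by the restriction to $\graphicalcat$; specifying that action consistently amounts to rebuilding the modular operad from the Segal presheaf, i.e., to redoing the hard direction of Theorem~\ref{nerve_theorem} rather than citing it.

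The paper closes this gap with two ingredients absent from your proposal. First, the hom-set coincidence that is actually true goes the other way: morphisms \emph{into} elementary graphs agree, $\jkgraphcat(H,K)=\graphicalcat(H,K)$ for $K$ elementary (Lemma~\ref{lemma elem graphs}), and this is exactly what a \emph{right} Kan extension along $\iota$ can see. Second, full faithfulness of $I:\jkgraphcat\to\csm$ (from \cite[\S 6]{JOYAL2011105}; this is what ``full subcategory'' buys) combines with full faithfulness of $N$ and the formal \cite[Proposition 1.1]{MR2366560} to give both that $N_{JK}$ is fully faithful and that $N_{JK}\cong\iota_*N$ (Lemma~\ref{fully faithful and right kan extension}), so no two-out-of-three argument is needed for full faithfulness. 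Essential surjectivity is then proved by taking a Segal $X\in\prejkgraphcat$, applying Theorem~\ref{nerve_theorem} to $\iota^*X$ to obtain $P$ with $\iota^*X\cong N(P)$, and showing that the unit $X\to\iota_*\iota^*X\cong N_{JK}(P)$ is an isomorphism: it is so on elementary graphs by Lemma~\ref{lemma elem graphs} together with a triangle identity, and Segality of both sides propagates this to all graphs (Lemma~\ref{jk segal implies nerve}). If you want to keep your two-out-of-three scheme, you must either prove fullness and essential surjectivity of $\iota^*$ on Segal objects directly---which runs straight into the obstruction above---or route the extension through $\iota_*$ as the paper does; as written, your proposal assumes its conclusion at the crucial step.
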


This theorem was announced in \cite{JOYAL2011105}, and in Section~\ref{section: JK Nerve} we show how this follows from Theorem~\ref{nerve theorem one}.
This is the first publicly available proof of Theorem~\ref{nerve theorem two}. 
Our proof does not use the techniques proposed by Joyal and Kock.

\subsection*{Related work}
The topic of nerve theorems has a rich literature (that we cannot hope to cover adequately), including a general machine \cite{bmw,weber} that one can use to prove nerve theorems.
This was used by Weber \cite{weber} to prove a nerve theorem for operads involving the dendroidal category $\Omega$ from \cite{mw} (see also the later account \cite[Theorem 2.5.4]{kocktrees}).
This is also the approach towards Theorem~\ref{nerve theorem two} that was indicated in \cite{JOYAL2011105}.
In her thesis \cite{Raynor:Thesis}, Sophie Raynor proved two variations of Theorem~\ref{nerve theorem two} along these lines: one dealt with non-unital modular operads, while the unital version used an alternative category of graphs (see also \cite{Raynor:DLCSM}).
In contrast, Theorem~\ref{nerve theorem one} does not fit into the framework of \cite{bmw}, as $\graphicalcat$ is not a \emph{full} subcategory of $\csm$.
Instead, the situation is more akin to the approach to the dendroidal nerve theorem found in the work of Cisinski, Moerdijk, and Weiss (see, for instance, \cite[Corollary 2.6]{cm-ds}).

\subsection*{Further directions}
In \cite{modular_paper_one} we explained the notion of (inner and outer) coface maps of $\graphicalcat$.
Given a coface map $\delta$ with codomain $G$, one can define the horn $\Lambda^\delta[G]$ which is a subobject of the representable object $\rgc[G]$.
A strict inner Kan presheaf $X$ is a presheaf such that every diagram with $\delta$ an inner coface map
\[ \begin{tikzcd}[column sep=small, row sep=small]
\Lambda^\delta[G] \rar \dar & X \\
\rgc[G] \arrow[ur, dotted, "\exists !" swap]
\end{tikzcd} \]
admits a unique filler.
One could ask if the presheaves of Theorem~\ref{nerve theorem one} coincide with the strict inner Kan presheaves, in analogy with the situation for categories, operads \cite{mw2}, properads \cite{hrybook}, and so on. 
See also Remark~\ref{remark about generators and relations definition}.

\subsection*{Outline}
We begin the paper by recalling, in Section~\ref{section paper2 background}, essential information about $\graphicalcat$ from the companion paper \cite{modular_paper_one}.
Section~\ref{section csm} deals with modular operads, and is split into two subsections, the first of which gives a monadic definition of modular operad valid in any closed symmetric monoidal category $\groundcat$.
As we are working with modular operads with involutive color sets, this definition is technically new, but we regard this section as background.
The heart of the paper begins in \S\ref{sections: graphical csm}, where we construct, for each graph $G$, a modular operad $\modopgenned{G}$.
This is part of a functor $J : \graphicalcat \to \csm$, and in Section~\ref{section: nerve theorem} we use this functor to prove Theorem~\ref{nerve theorem one}.
The most delicate part is found in \S\ref{section mod op segal presheaf}, where we associate a modular operad to any Segal $\graphicalcat$-presheaf.
The final section indicates how to recover Theorem~\ref{nerve theorem two} from Theorem~\ref{nerve theorem one}.

\subsection*{Acknowledgments} 
This paper owes a lot to discussions several years ago with both Andr\'e Joyal and Joachim Kock.
We are also grateful to Sophie Raynor for explaining her thesis work to us.
Finally, we'd like to thank various members of the Centre of Australian Category Theory for their questions and suggestions as this project developed.

\subsection*{Notation}
Let $\mathbf{C}$ be a category.
If $x$ and $y$ are objects of $\mathbf{C}$, we will write either $\hom(x,y)$ or $\mathbf{C}(x,y)$ for the set of morphisms from $x$ to $y$.
We will write $\widehat{\mathbf{C}}$ for the category of $\mathbf{C}$-presheaves, that is, contravariant functors from $\mathbf{C}$ to the category $\Set$ of sets.

\section{Background on the graphical category \texorpdfstring{$\graphicalcat$}{U}}\label{section paper2 background}

All material from this section appears in some form in the companion paper \cite{modular_paper_one}, where proofs and further details may be found.
Here we've only included the essential topics needed to understand what follows.

\begin{figure}[htb]
\includegraphics[scale=.5]{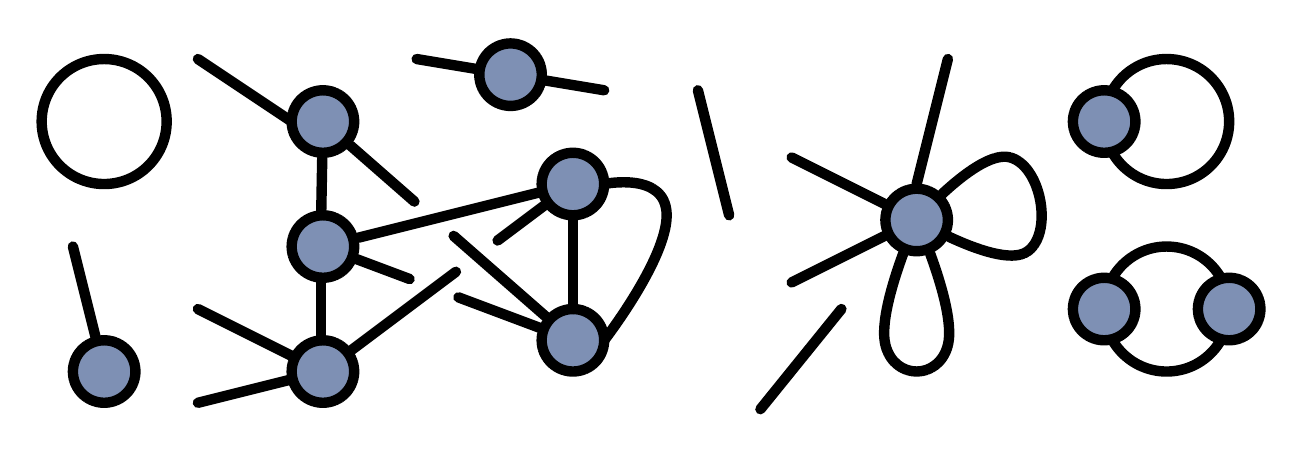}
\caption{A typical graph with loose ends}\label{figure graph picture}
\end{figure}

Graphs in this paper are undirected and are allowed to have `loose ends'; that is, it is not necessary for both ends (or either end) of an edge to touch a vertex.
See Figure~\ref{figure graph picture} for a picture of one such graph.
A model to keep in mind (compare \cite[\S 2]{MR1113284}) is that a graph can be taken to be a pair $(X,V)$ where $X$ is a space, $V$ is a finite set of points of $X$, and $X\setminus V$ is a one-manifold (without boundary) having only a finite set of connected components.
Components of $X\setminus V$ are the edges of the graph, and elements of $V$ are the vertices.
Thus we may have loops divorced from any vertex (those components of $X\setminus V$ homeomorphic to $S^1$), edges loose at one end (those with one missing limit point in $X$), and free floating edges (components of $X$ homeomorphic to $(0,1)$ which contain no vertices).

Such pictures lead to the following definition.
The involutive set $A$ is the set of \emph{arcs}, which are edges together with an orientation, and the involution $i$ swaps orientation.
The partially-defined function $t: A \nrightarrow V$ takes an arc to the vertex it points towards.

\begin{definition}\label{definition graphs}
A \emph{graph} $G$ consists of 
\begin{enumerate}[label=(\alph*), ref=\alph*]
\item 
a diagram of finite sets\footnote{To ensure that we have a \emph{set} of graphs, insist that all of the sets $A, D, V$ are taken to be subsets of some fixed infinite set.} 
\[
\begin{tikzcd}
	A \arrow[loop left, "i"] & \lar[swap]{s} D \rar{t} & V,
\end{tikzcd}
\]
and
\item
a subset $\eth(G) \subseteq A \setminus sD$ called the \emph{boundary} of $G$\label{boundary set}
\end{enumerate}
so that
\begin{enumerate}[label=(\Alph*), ref=\Alph*]
\item $i$ is a fixedpoint-free involution,
\item $s$ is a monomorphism,
\item $isD \setminus sD \subseteq \eth(G)$, and
\item $\eth(G) \setminus isD$ is an $i$-closed subset of $A$.
\end{enumerate}
We will nearly always consider $D$ as a subset of $A$, and suppress the natural inclusion function $s : D \subseteq A$ from the notation.
A graph will be called \emph{safe} if $\eth(G) = A \setminus sD$, while if the containment from \eqref{boundary set} is strict then the graph will be called \emph{unsafe}.
\end{definition}

This definition is a modification of that in \cite{JOYAL2011105} in that it has a specified notion of boundary.
There are several other combinatorial definitions of graphs \cite{batanin-berger,MR1113284,yj15}
all of which are equivalent (Proposition 15.2, Proposition 15.6, and Proposition 15.8 of \cite{batanin-berger}) to this one.

\begin{example}[Exceptional edge and nodeless loop]\label{examples edge and loop}
If $Z$ is a set, write $2Z$\index{$2Z$} for the set
\[
	\{ z, z^\dagger \mid z\in Z\} \cong Z \amalg Z
\]
together with the evident involution.
We consider $Z$ as a subset of $2Z$, and write $Z^\dagger$\index{$Z^\dagger$} for its complement.
\begin{itemize}
\item 
The \emph{exceptional edge}, $\exceptionaledge$, is the safe graph with $A = 2\{*\} = \eth(\exceptionaledge)$ and $V=D=\varnothing$.
As this graph is so important, we give special names to its arcs and write $A = \eearcs$.
\item 
A variation is to take $A = 2\{*\}$, $V=D=\varnothing$, and have an empty boundary.
We call graphs isomorphic to this one \emph{nodeless loops}.
\end{itemize}
\end{example}

Recall that the neighborhood of a vertex $v\in V$ is defined to be $ \nbhd(v) = t^{-1}(v) \subseteq D$.
The \emph{valence} of a vertex $v$ is just the cardinality of the set $\nbhd(v)$.

Many other examples of graphs are given in the companion paper \cite{modular_paper_one}.
For instance, if $G$ is connected and every vertex is bivalent, then $G$ is either a linear graph or a cycle.

\begin{definition}[Stars]\label{definition star}
For $n\geq 0$, the $n$-star $\medstar_n$\index{$\medstar_n$} has $V$ a one-point set, $D = \{ 1, \dots, n\}$, and $A = 2D$ (where $2(-)$ is as in Example~\ref{examples edge and loop}).
The function $s : D \to A = 2D$ is just the subset inclusion.
More generally, if $S$ is any set we define $\medstar_{S}$ to be the (connected) graph with a single vertex so that $A = 2S$, $D=S^\dagger \subseteq 2S$, and $\eth(\medstar_{S}) = S \subseteq 2S$.
There are also variations of stars built from a fixed (connected) graph $G$.
\begin{itemize}
\item 
Let $\medstar_G$ be the one-vertex graph with $A = 2\eth(G)$ and $D= \eth(G)^\dagger$.
Notice that we must have $\eth(\medstar_G) = A \setminus D = \eth(G)$ and that the neighborhood of the unique vertex is $D = \eth(G)^\dagger$.
In other words, $\medstar_G = \medstar_{\eth(G)}$.
\item Suppose that $v$ is a vertex of $G$ and let $\nbhd(v)$ be its neighborhood in $G$.
We let $\medstar_v$ denote the graph with $V = \{ v \}$, $D = \nbhd(v)$, and $A = 2\nbhd(v)$.
The boundary of $\medstar_v$ is $\nbhd(v)^\dagger \subseteq 2\nbhd(v)$.
There is a canonical embedding
\[
	\iota_v : \medstar_v \to G
\]
coming from the natural inclusions $\{ v \} \subseteq V(G)$ and $\nbhd(v) \subseteq D(G)$.
\end{itemize}
\end{definition}

Let us now recall several varieties of morphisms from \cite{modular_paper_one} and \cite{JOYAL2011105}.

\begin{definition}[Natural transformations of graphs]\label{def nat trans}
Let $\mathscr{I}$ denote the category with three objects and three generating arrows, of shape $\begin{tikzcd}[column sep=small] \bullet \arrow[loop left] & \lar \bullet \rar & \bullet. \end{tikzcd}$
Part of the data of each graph $G$ is a functor from $\mathscr{I}$ into finite sets where the leftward arrow is sent to a monomorphism and the generating endomorphism is sent to a free involution.
\begin{itemize}
	\item A graph is \emph{connected} if this functor is connected as an object in $\finset^{\mathscr{I}}$ (that is, if it is nonempty and cannot be written as a nontrivial coproduct in this category).
	\item If $G$ and $G'$ are graphs, then a natural transformation $G\to G'$ is said to be \emph{\'etale} if
	\begin{enumerate}
	 	\item the right-hand square of
\begin{equation*}
\begin{tikzcd}
	A \arrow[loop left, "i"]\dar & \lar[swap]{s} D\dar \rar{t} & V\dar \\
	A' \arrow[loop left, "i'"] & \lar[swap]{s'} D' \rar{t'} & V'
\end{tikzcd}
\end{equation*}
is a pullback, and \label{etale first condition}
\item the set $A\setminus (\eth(G) \amalg D)$ maps into $A' \setminus (\eth(G') \amalg D')$. \label{etale second condition}
	 \end{enumerate} 
	\item If $G$ and $G'$ are connected graphs, then an \'etale map is called an \emph{embedding} if $V \to V'$ is a monomorphism.
	\item The set $\bigembeddings(G)$ consists of all embeddings with codomain $G$.
	The set $\embeddings(G)$ is the quotient of $\bigembeddings(G)$ by the relation that $f \sim h$ whenever there is an isomorphism $z$ with $f=hz$.
\end{itemize}
\end{definition}

Note that \eqref{etale second condition} is automatically satisfied when $G$ is safe.
The original definition of \'etale, from \cite{JOYAL2011105} only had condition \eqref{etale first condition} as all graphs were implicitly regarded as safe.

In order to state our definition of graphical map from \cite{modular_paper_one}, we need two supplementary definitions.
Both of these are initially functions on $\bigembeddings(G)$, but as we saw in the companion paper these descend to $\embeddings(G)$.

\begin{definition}[Invariants of embeddings]
Suppose that $G$ and $G'$ are two (potentially unsafe) graphs.
\begin{itemize}
	\item Given any \'etale map $f: G'\to G$, there is a corresponding element \[ \sum_{v\in V'} f(v) \in \mathbb{N}V \] in the free commutative monoid on $V$.
The \emph{vertex sum}, denoted $\varsigma : \embeddings(G) \to \mathbb{N}V$, is the function that takes $[f: G' \to G]$ to $\sum_{v\in V'} f(v)$.
As we restrict to embeddings, this function factors through the power set $\wp(V) \subseteq \mathbb{N}V$.
	\item The restriction of any embedding $f : G' \to G$ to the boundary $\eth(G')$ is a monomorphism.
	We write $\eth: \embeddings(G) \to \wp(A(G))$ for the function which takes $[f : G'\to G]$ to $f(\eth(G')) \subseteq A(G)$.
\end{itemize}
\end{definition}

\begin{definition}[Graphical category]\label{graphical category definition}
The \emph{graphical category} $\graphicalcat$ has objects the safe, connected graphs.
A morphism $\varphi: G \to G'$ (where $G$ and $G'$ are safe) consists of the following data:
	\begin{itemize}
		\item A map of involutive sets $\varphi_0 : A \to A'$ 
		\item A function $\varphi_1 : V \to \embeddings(G')$
	\end{itemize}
These data should satisfy two conditions.
\begin{enumerate}[label={({\roman*})},ref={\roman*}]
	\item The inequality $\sum_{v\in V} \varsigma(\varphi_1(v)) \leq \sum_{w\in V'} w$ holds in $\mathbb{N}V'$. \label{graphical map defn: no double vertex covering}
	\item For each $v$, we have a (necessarily unique) bijection making the diagram
	\[
		\begin{tikzcd}
			\nbhd(v) \rar{i} \dar[dashed, "\cong"]  & A \dar{\varphi_0} \\
			\eth(\varphi_1(v)) \rar[hook] & A'
		\end{tikzcd}
	\]
	commute, where the top map $i$ is the restriction of the involution on $A$.\label{graphical map defn: boundary compatibility}
	\item If the boundary of $G$ is empty, then there exists a $v$ so that $\varphi_1(v)$ is not an edge. \label{graphical map defn: collapse condition}
\end{enumerate}
The \emph{extended graphical category} $\egc$ is defined similarly, except the objects are allowed to be arbitrary connected graphs and condition \eqref{graphical map defn: collapse condition} for morphisms is replaced by 
\begin{enumerate}[label={({\roman*}')},ref={\roman*'},start=3]
	\item If the boundary of $G$ is empty and $\varphi_1(v)$ is an edge for every $v$, then $G'$ is a nodeless loop. \label{egc defn: collapse condition}
\end{enumerate}
\end{definition}

The composition in $\graphicalcat$ and $\egc$ are given by graph substitution.
Let us recall the idea; a precise definition in our setting appears in Definition~\ref{def segal core and graph sub}.
Suppose that we are given a graph $G$, a collection of graphs $H_v$ indexed by the vertices of $G$, and specified
bijections $i\nbhd(v) \cong \eth(H_v)$.
Then we can form a new graph $G\{H_v\}$ where we replace each vertex $v$ by the graph $H_v$, identifying the edges at the boundary of $H_v$ with the edges incident to the vertex $v$ in $G$.

\begin{definition}[Composition of graphical maps]\label{def graph map composition}
Suppose that $\varphi : G \to G'$ and $\psi : G' \to G''$ are graphical maps.
We will define the composite $\psi \circ \varphi$.
First, we have $(\psi \circ \varphi)_0 = \psi_0 \circ \varphi_0$.
To define $(\psi \circ \varphi)_1(v)$, where $v$ is a vertex of $G$, first let $\varphi_v : K_v \hookrightarrow G'$ be an embedding representing $\varphi_1(v)$.
For a vertex $w$ in $\varphi_v(V(K_v)) \subseteq V(G')$, we can find an embedding $\psi_w : H_w \hookrightarrow G''$ representing $\psi_1(w)$.
It turns out that the $\psi_w$ assemble into a single embedding\footnote{The fact that this is an embedding and not merely \'etale follows from \eqref{graphical map defn: no double vertex covering} of Definition~\ref{graphical category definition}.}
\begin{equation}\label{eq composite embedding}
K_v \{ H_w \} \hookrightarrow G''
\end{equation}
which factors each of the embeddings $\psi_w$.
The function $(\psi \circ \varphi)_1$ sends $v$ to the class of \eqref{eq composite embedding} in $\embeddings(G'')$.
\end{definition}

See \cite{modular_paper_one} for further details.

\begin{remark}
\label{refinement remark}
There is a related notion of morphism of connected graphs in \cite{JOYAL2011105}, but based on \'etale maps between connected safe graphs, rather than embeddings.
Joyal and Kock do not include the conditions \eqref{graphical map defn: no double vertex covering} and \eqref{graphical map defn: collapse condition} of Definition~\ref{graphical category definition} in their definition.
Further, condition \eqref{graphical map defn: boundary compatibility} is modified to reflect that \'etale maps need not be injective on boundaries.
This yields a category of connected safe graphs $\jkgraphcat$, and each graphical map in the sense of Definition~\ref{graphical category definition} is a morphism in $\jkgraphcat$.
The weak factorization system that is meant to exist on the category of Joyal and Kock becomes an orthogonal factorization system on our category,\footnote{Compare with \cite[2.4.14]{Kock:GHP} in the directed setting, which is much simpler as embeddings in that context are monomorphisms.} which is much easier to work with.
Moreover, our category admits a generalized Reedy structure in the sense of \cite{bm_reedy}, allowing us flexibility when considering model structures in the companion paper \cite{modular_paper_one}.
\end{remark}

Embeddings constitute the right class of an orthogonal factorization system on $\graphicalcat$ (resp. on $\egc$).
Morphisms in the left class are called \emph{active maps}.

\begin{definition}[Active maps]\label{def active and star active}
A morphism $\varphi: G \to G'$ is called \emph{active} if $\varphi_0: A \to A'$ induces a bijection $\eth(G) \to \eth(G')$.
\begin{itemize}
	\item If $G$ is a graph, there is a canonical active map $\medstar_G \to G$ (see Definition~\ref{definition star}) which sends the unique vertex of $\medstar_G$ to $[\id_G : G \to G]$ and on arcs gives the identity on $\eth(\medstar_G) = \eth(G)$.
	\item More generally, if $G$ is a graph, $S$ is a set, and $\xi : S \to \eth(G)$ is a function, then there is an associated active map $\medstar_S \to G$ whose map on arcs restricts to $\xi : S \to \eth(G) \subseteq A(G)$.
\end{itemize}
\end{definition}

Before Definition~\ref{def graph map composition}, we mentioned the idea of graph substitution. 
In Construction~\ref{construction assembly}, it will be helpful to have a concrete model on hand.
Further, the notion of the Segal core of a graph is essential throughout this paper.
As these concepts are closely related, we combine them into a single definition.
Recall that if $G$ is a graph then the representable presheaf $\rgc[G]$ is the contravariant functor from $\graphicalcat$ to $\Set$ with $\rgc[G]_H = \graphicalcat(H,G)$.

\begin{definition}[Graph substitution and Segal cores]\label{def segal core and graph sub}
Suppose that $G$ is a connected graph containing at least one vertex, and let $E_i$ be its set of internal edges.
For each internal edge $e\in E_i$, choose an ordering $e = [x_{e}^1, x_{e}^2]$ for the two-element equivalence class of arcs comprising $e$.
The underlying functor of $G$ (in the diagram category $\finset^{\mathscr{I}}$) may be regarded as a coequalizer
\begin{equation} \label{eq coequalizer}
\begin{tikzcd}
\coprod\limits_{e \in E_i} \exceptionaledge \rar[shift left, "\outeredge"] \rar[shift right, "\inneredge" swap] & \coprod\limits_{v\in V} \medstar_v \rar & G,
\end{tikzcd} \end{equation}
where the map on the right is $\coprod_v \iota_v$.
Explicitly, we have
\begin{itemize}
	\item $\outeredge$ is the coproduct of maps $\outeredge_e : \exceptionaledge \to \medstar_{tx_e^1}$ with $\outeredge_e(\edgemajor) = (x_e^1)^\dagger \in \eth(\medstar_{tx_e^1})$ and $\outeredge_e(\edgeminor) = x_e^1 \in D(\medstar_{tx_e^1})$;
	\item $\inneredge$ is the coproduct of maps $\inneredge_e :  \exceptionaledge \to \medstar_{tx_e^2}$ with $\inneredge_e(\edgeminor) = (x_e^2)^\dagger \in \eth(\medstar_{tx_e^2})$ and $\inneredge_e(\edgemajor) = x_e^2 \in D(\medstar_{tx_e^2})$.
\end{itemize}
\begin{itemize}[label=$\blacktriangleright$, leftmargin=*]
\item 
We first describe \emph{graph substitution}.
Suppose we are given graphs $H_v$ and isomorphisms $m_v$ from $i(\nbhd(v)) \subseteq A(G)$ to $\eth(H_v)$.
We then have induced maps $\tilde \outeredge$ and $\tilde \inneredge$, where
\begin{itemize}
	\item $\tilde \outeredge$ is the coproduct of maps $\tilde \outeredge_e : \exceptionaledge \to H_{tx_e^1}$ with $\tilde \outeredge_e(\edgemajor) = m_{tx_e^1}(ix_e^1) \in \eth(H_{tx_e^1})$, 
	\item $\tilde \inneredge$ is the coproduct of maps $\tilde \inneredge_e : \exceptionaledge \to H_{tx_e^2}$ with 
	 $\tilde \inneredge_e(\edgeminor) = m_{tx_e^2}(ix_e^2) \in \eth(H_{tx_e^2})$.
\end{itemize}
We can then form the coequalizer 
\[ \begin{tikzcd}
\coprod\limits_{e \in E_i} \exceptionaledge \rar[shift left, "\tilde \outeredge"] \rar[shift right, "\tilde \inneredge" swap] & \coprod\limits_{v\in V} H_v \rar["\pi"] & K.
\end{tikzcd} \]
There is an induced monomorphism (see \cite{modular_paper_one}) $\eth(G) \to A(K) \setminus D(K)$ and we declare $\eth(K)$ to be the image of this function.
We write $G\{H_v\}$ for this graph, called graph substitution of $H_v$ into $G$.
\item
We likewise can form corresponding coequalizer to \eqref{eq coequalizer} in $\pregraphcat$,
\[ \begin{tikzcd}
\coprod\limits_{e \in E_i} \rgc[\exceptionaledge] \rar[shift left, "\outeredge"] \rar[shift right, "\inneredge" swap] & \coprod\limits_{v\in V} \rgc[\medstar_v] \rar & \segalcore[G],
\end{tikzcd} \]
and we call the target the \emph{Segal core} of $G$.
It comes with a map $\segalcore[G] \to \rgc[G]$ induced by $\coprod \iota_v : \coprod_{v\in V} \rgc[\medstar_v] \to \rgc[G]$.
In the case when $G = \exceptionaledge$, we declare the map $\segalcore[G] \to \rgc[G]$ to be the identity map on $\rgc[G]$.
\end{itemize}
\end{definition}

We return to Segal core definitions in a different context in Notation~\ref{notation segal core subscript}.

\section{Modular operads}\label{section csm}

In this section, we define (colored) modular operads in a closed monoidal category (\S\ref{subsection CSM}) and fabricate a class of examples coming from graphs (\S\ref{sections: graphical csm}).
Our modular operads come equipped with an involution on color sets, and are an enriched version of the \emph{compact symmetric multicategories} introduced in \cite{JOYAL2011105}.
All of the examples in \S\ref{sections: graphical csm} in fact come equipped with \emph{free} involutions on the sets of colors.

\begin{remark}
At first glance it may appear that \S\ref{subsection CSM} depends on our particular choice of graph formalism (Definition~\ref{definition graphs}).
In fact, our constructions are relatively formalism agnostic, as long as we can get a handle on what the set of arcs (and the involution on that set) of a graph should be.
For example, if one chooses to use Yau--Johnson graphs as in \cite[\S 1.2]{yj15}, then the set of arcs $A$ may be identified with $\flag(G)\amalg\legs_{\mathrm{o}}(G)$.
The involution on $A$ is uniquely specified so that it
\begin{itemize}
	\item acts on this added $\legs_{\mathrm{o}}(G)$ component by including into $\flag(G)$,
	\item acts on $\flag(G) \setminus \legs(G)$ via $\iota_G$,
	\item acts on $\legs_{\mathrm{e}}(G) \subseteq \flag(G)$ by $\pi_G$.
\end{itemize}
We consider the subset $\legs_{\mathrm{o}}(G) \subseteq \flag(G)$ as being part of neighborhoods for some vertices, while the new summand $\legs_{\mathrm{o}}(G)$ constitutes part of the boundary $\eth(G)$.
Specifically, $\eth(G)$ is the sum of this added $\legs_{\mathrm{o}}(G)$ and $\legs_{\mathrm{e}}(G) \subseteq \flag(G)$.
\end{remark}

Throughout this section `graph' will mean `connected graph' unless otherwise indicated.
We emphasize that we are generally including nodeless loops as well, which is important in order to avoid the issue mentioned in Remark~\ref{remark about joyal-kock}.

\subsection{Monads governing modular operads}\label{subsection CSM}

Let us fix a cocomplete, closed, symmetric monoidal category $(\groundcat, \otimes, 1)$. 
In this subsection we give a monadic description of $\fC$-colored modular operads in $\groundcat$, where $\fC$ is an involutive set.
The monad in question is an adaptation of other existing monads for modular operads (\cite[\S 7]{Markl:OP}, \cite[2.17]{MR1601666}, \cite[\S10.1]{batanin-berger}) and generalized operadic structures (\cite[\S6]{batanin-berger}, \cite[10.2,10.3]{yj15}).
It is also closely related to the monad from \cite[\S 5]{JOYAL2011105}; see Remark~\ref{remark about joyal-kock}.
As such, the chief aim of the beginning of this subsection is to fix notation and provide enough background for the remainder of the paper.
In Definition~\ref{definition CSM} we explain how to define morphisms between modular operads with different color sets.

\begin{definition}
\label{preliminary definition bijections over C}
Let $\bijcat_{\fC}$ denote the groupoid with:
\begin{itemize}
	\item objects pairs $(S,\xi)$, where $S$ is a finite set and $\xi : S \to \fC$ is a function, and
	\item morphisms $(S,\xi) \to (S',\xi')$ are bijections $f  : S \to S'$ so that $\xi = \xi' \circ f$.
\end{itemize}
\end{definition}
In particular, $\bijcat_{\{*\}}$ is just the usual category of finite sets and bijections. 
Note that Definition~\ref{preliminary definition bijections over C} ignores the involution present on the set $\fC$.

\begin{remark}
\label{remark on skeletality}
We could instead restrict this definition to the finite sets $\{1, \dots, n\}$.
In this case, a coloring function $\xi$ is the same thing as an ordered list $c_1, \dots, c_n$ of elements of $\fC$.
Suppose $\sigma$ is an automorphism of $\{1,\dots, n\}$, considered as a morphism of $\bijcat_{\fC}$ from $\xi \to \xi \sigma^{-1}$.
Using the identification of $\xi$ with the list $c_1, \dots, c_n$ and likewise for $\xi\sigma^{-1}$, the morphism $\sigma$ goes from $c_1, \dots, c_n$ to $c_{\sigma^{-1}(1)}, \dots, c_{\sigma^{-1}(n)}$.
Thus we can identify $\Sigma_{\fC}$ from \cite[Definition 2.11]{HackneyRobertsonYau:SRLPDBC} with the full subcategory of $\bijcat_{\fC}$ whose objects have the form $(\{1,\dots, n\}, \xi)$ for some $n$.

The functor $\Sigma_{\fC} \to \bijcat_{\fC}$ is, in fact, an equivalence of categories.
Everything we're doing in this section could actually be done `skeletally', that is, by restricting our constructions to $\Sigma_{\fC}$.
This would require us to consider graphs with extra structure, namely orderings on each set $\nbhd(v)$ and on $\eth(G)$.
We've typically taken this approach in earlier work (for example, in \cite{hry_cyclic} which also deals with the undirected context), but will not do so here.
This choice allows us to track certain other papers (e.g., \cite{Doubek:MECOA,JOYAL2011105,Markl:MEONMO}) more closely.
\end{remark}

\begin{notation}
\label{notation inclusion}
If $Z$ is a subset of $\fC$, we will write $\incl : Z \hookrightarrow \fC$ for the inclusion.
\end{notation}

We now define certain graph groupoids.

\begin{definition}[Groupoids of colored graphs]
\label{definition graph groupoids}
Let $\fC$ be a set equipped with an involution $c\mapsto c^\dagger$.
\begin{itemize}
\item A $\fC$-colored graph is a graph $G$ together with an involutive map $\zeta: A \to \fC$.
\item Let $\conngraph_{\fC}$ be the groupoid whose objects are $\fC$-colored graphs and whose isomorphisms $(G,\zeta) \to (G',\zeta')$ are graph isomorphisms $z : G\to G'$ so that 
\[ \tcdtriangle{A}{\zeta}{z}{A'}{\zeta'}{\fC} \]
commutes.
\item There is a functor, which we call $\eth_{\fC}$,
\[ \begin{tikzcd}[row sep=tiny]
\conngraph_{\fC} \rar["\eth_{\fC}"] & \bijcat_{\fC} \\
(G,\zeta) \rar[mapsto] & (\eth(G), \zeta|_{\eth(G)}).
\end{tikzcd} \]
\item If $(S,\xi) \in \bijcat_{\fC}$, let $\conngraph_{\fC}(S,\xi)$ denote the category $(S,\xi) \downarrow \eth_{\fC}$.
\end{itemize}
\end{definition}

Let's unravel this last definition.
An object of $\conngraph_{\fC}(S,\xi)$ consists of a triple $(f,G,\zeta)$ where $(G,\zeta)$ is a $\fC$-colored graph and $f: S \to \eth(G)$ is a bijection so that $\zeta|_{\eth(G)} \circ f = \xi$.
An isomorphism $(f,G,\zeta) \to (f',G',\zeta')$ is a graph isomorphism $z: G \to G'$ so that the diagram
\[ \begin{tikzcd}[column sep=small]
& S \arrow[dl, "f" swap] \arrow[dr, "f'"] \\
\eth(G) \dar[hook] \arrow[rr] & & \eth(G') \dar[hook] \\
A \arrow[dr,"\zeta" swap] \arrow[rr,"z"] & & A' \arrow[dl,"\zeta'"] \\
& \fC
\end{tikzcd} \]
commutes.

\begin{remark}
The groupoid $\conngraph_{\{*\}}$ is equivalent to $\Iso(\graphicalcat) \amalg C_2$, where $C_2$ is the cyclic group of order 2 (considered as a one-object groupoid).
Indeed, the only connected graphs that do not already appear in $\graphicalcat$ are nodeless loops, each of which has a single nontrivial automorphism.
\end{remark}

Notice that if $\ell : (S,\xi) \to (S',\xi')$ is an isomorphism of $\bijcat_{\fC}$, then we have an induced functor in the reverse direction $\conngraph_{\fC}(S,\xi) \leftarrow \conngraph_{\fC}(S',\xi') : \ell^*$ taking $(f,G,\zeta)$ to $(f\circ \ell, G, \zeta)$.
This is of course an isomorphism, and we write $\ell_! : \conngraph_{\fC}(S,\xi) \to \conngraph_{\fC}(S',\xi')$ for the functor sending $(f,G,\zeta)$ to $(f \circ \ell^{-1},G,\zeta)$.
In other words, we are considering $\conngraph_{\fC}(-)$ as a functor from $\bijcat_{\fC}$ to the category of groupoids. 

Before approaching the next definition, we introduce some convenient shorthand which we use for the remainder of this subsection. 
Suppose that $(G,\zeta)$ is a $\fC$-colored graph and $X$ is an object of $\groundcat^{\bijcat_{\fC}}$.
We will write $X(v)$ for the object 
\[ X(v) = X(\dnbhd{v}, \zeta|_{\dnbhd{v}})\] 
in $\groundcat$, suppressing the colored graph $(G,\zeta)$ from the notation.
Likewise, for graph groupoids, we write 
\[
	\conngraph_{\fC}(v) = \conngraph_{\fC}(\dnbhd{v}, \zeta|_{\dnbhd{v}}).
\]

\begin{definition}[Decorations]\label{defn obj of decorations}
Suppose given an object $X\in \groundcat^{\bijcat_{\fC}}$.
\begin{enumerate}
\item Let $(G,\zeta)$ be a $\fC$-colored graph. Define the object 
\[ X[G,\zeta] = \bigotimes_{v\in V} X(v) = \bigotimes_{v\in V} X(\dnbhd{v}, \zeta|_{\dnbhd{v}}) \]
in $\groundcat$.
\item A \emph{decoration of $G$ by $X$} or an \emph{$X$-decoration of $G$} consists of an involutive function $\zeta : A(G) \to \fC$ and an element of $X[G,\zeta]$. 
\item The assignment $(X,(G,\zeta)) \mapsto X[G,\zeta]$ is 
the object part of a bifunctor
\[ \groundcat^{\bijcat_{\fC}} \times \conngraph_{\fC} \to \groundcat. \]
\end{enumerate}
\end{definition}

\begin{construction}[Colored graph substitution]\label{construction assembly}
Suppose that $(G,\zeta)$ is a $\fC$-colored graph.
We describe an associated functor
\[
	\prod_{v\in V} \conngraph_{\fC}(\zetavnbhd) \to \conngraph_{\fC}(\eth(G), \zeta|_{\eth(G)}).
\]
Let $(m_v, H_v, \zeta_v)$ denote an object of $\conngraph_{\fC}(\zetavnbhd) = \conngraph_{\fC}(\dnbhd{v}, \zeta|_{\dnbhd{v}})$, where $m_v : \dnbhd{v} \to \eth(H_v)$ is a bijection satisfying $\zeta|_{\dnbhd{v}} = \zeta_v \circ m_v$.
Then $\prod_v (m_v, H_v, \zeta_v)$ will map to an object of the form 
$(\eth(G) \xrightarrow{\cong} \eth(G\{H_v\}), G\{H_v\}, \tilde \zeta)$.
Here, the graph  substitution $G\{H_v\}$ is defined using the bijections $m_v$.
The coloring function $\tilde \zeta$ is induced from $\zeta$ and the $\zeta_v$.
Specifically, the underlying functor part of the graph substitution is described in Definition~\ref{def segal core and graph sub}.
Since colimits in functor category $\finset^{\mathscr{I}}$ are computed objectwise, we have a coequalizer diagram and an induced map 
\[ \begin{tikzcd}
\coprod\limits_{e \in E_i} A(\exceptionaledge) \arrow[dr, "\zeta"] \rar[shift left, "\tilde \outeredge"] \rar[shift right, "\tilde \inneredge" swap] & \coprod\limits_{v\in V} A(H_v) \rar["\pi"] \dar["\zeta_v"]& A(G\{H_v\}) \arrow[dl, dashed, "\tilde \zeta" swap] \\
& \fC
\end{tikzcd} \]
into $\fC$. The map $\eth(G) \to \eth(G\{H_v\})$ is the canonical identification of $\eth(G)$ with $\eth(G\{H_v\})$.
\end{construction}

Graph substitution induces an endofunctor $\freegraphmonad_{\fC} = \freegraphmonad \colon \groundcat^{\bijcat_{\fC}} \rightarrow \groundcat^{\bijcat_{\fC}}$ where 
\[ (\freegraphmonad X)(S,\xi) = \colim_{(f,G,\zeta) \in \conngraph_{\fC}(S,\xi)} X[G,\zeta].\]
Our next goal is to show that $\freegraphmonad$ can be given the structure of a monad (Proposition~\ref{proposition free graph monad}).
Let us first define $\mu :\freegraphmonad\freegraphmonad \Rightarrow \freegraphmonad$; 
it is sufficient to define, for $(f,G,\zeta) \in \conngraph_{\fC}(S,\xi)$ and $X\in \groundcat^{\bijcat_{\fC}}$ the composites
\[
	\freegraphmonad X[G,\zeta] \to \freegraphmonad\freegraphmonad X (S,\xi) \to \freegraphmonad X (S,\xi).
\]
We have the following equalities 
\begin{align*}
\freegraphmonad X[G,\zeta] =  \bigotimes_{v\in V} \freegraphmonad X(\zetavnbhd) 
&= \bigotimes_{v\in V} \colim_{\substack{(m_v, H_v, \zeta_v) \in \\ \conngraph_{\fC}(\zetavnbhd) }} X[H_v, \zeta_v] \\
&\cong \colim_{\prod_v \conngraph_{\fC}(\zetavnbhd)} \bigotimes_{v\in V} X[H_v, \zeta_v],
\end{align*}
where the isomorphism comes from the fact that $\groundcat$ is closed (so $\otimes$ commutes with colimits).
Further, we have
\[
\bigotimes_{v\in V(G)} X[H_v, \zeta_v] \cong \bigotimes_{w\in V(G\{H_v\})} X(\zetawnbhdone) = X[G\{H_v\}, \tilde \zeta]
\]
where $\tilde \zeta$ is the coloring for $G\{H_v\}$ appearing in Construction~\ref{construction assembly}.
Thus graph substitution provides the first map in the composite 
\begin{equation}\label{eq mu action}
\freegraphmonad X[G,\zeta] \to \colim_{\conngraph_{\fC}(\eth(G), \zeta|_{\eth(G)})} X[K,\zeta'] \xrightarrow{\cong} (\freegraphmonad X)(S,\xi),	
\end{equation}
while the second morphism comes from the functor $\conngraph_{\fC}(\eth(G), \zeta|_{\eth(G)}) \to \conngraph_{\fC}(S,\xi)$ induced by $f : (S,\xi) \to (\eth(G),\zeta|_{\eth(G)})$.

\begin{remark}
\label{remark mu when G lacks vertices}
The above degenerates into something much simpler for $(f,G,\zeta) \in \conngraph_{\fC}(S,\xi)$ when $G$ has no vertices.
In that case, both $\freegraphmonad X[G,\zeta]$ and $X[G,\zeta]$ are the tensor unit.
Further, what would usually be the structural map $X[G\{H_v\}, \tilde \zeta] \to (\freegraphmonad X)(\eth(G), \zeta|_{\eth(G)})$ just becomes a map from $X[G,\zeta]$ at the object $(\id, G, \zeta) \in \conngraph_{\fC}(\eth(G), \zeta|_{\eth(G)})$.
That is, \eqref{eq mu action} factors through this structural map:
\[ \begin{tikzcd} 
& X[G,\zeta] \dar\\
\freegraphmonad X[G,\zeta] \rar \arrow[ur,dashed] &  \colim\limits_{\conngraph_{\fC}(\eth(G), \zeta|_{\eth(G)})} X[K,\zeta'] \rar & (\freegraphmonad X)(S,\xi).
\end{tikzcd} \]
\end{remark}

We now turn to the unit $\eta : \id \Rightarrow \freegraphmonad$.
For this, the following definition is helpful.
\begin{definition}\label{def star coloring}
Let $(S,\xi)$ be an object of $\bijcat_{\fC}$. 
Recall that the graph $\medstar_S$ from Definition~\ref{definition star} has a single vertex, $A = 2S$, and $\eth(\medstar_S) = S$.
There is a unique involutive extension $\xi^\medstar : 2S \to \fC$ of $\xi : S \to \fC$, namely the one with $\xi^\medstar(s) = \xi(s)$ and $\xi^\medstar(s^\dagger) = (\xi(s))^\dagger$.
\end{definition}
If $(S,\xi) \in \bijcat_{\fC}$, then $X[\medstar_S, \xi^\medstar] = X(S,\xi)$.
The map
\[
	\eta_{(S,\xi)} : X(S,\xi) \to \freegraphmonad X (S,\xi)
\]
is defined to be the structural map 
\[ X[\medstar_S, \xi^\medstar] \to \colim_{(f,G,\zeta) \in \conngraph_{\fC}(S,\xi)} X[G,\zeta]\]
associated with the object $(\id_S, \medstar_S, \xi^\medstar) \in \conngraph_{\fC}(S,\xi)$.

\begin{proposition}
\label{proposition free graph monad}
The functor $\freegraphmonad = \freegraphmonad_{\fC} : \groundcat^{\bijcat_{\fC}} \to \groundcat^{\bijcat_{\fC}}$, coupled with the natural transformations $\mu : \freegraphmonad \freegraphmonad \Rightarrow \freegraphmonad$ and $\eta : \id \Rightarrow \freegraphmonad$, comprise a monad.
\end{proposition}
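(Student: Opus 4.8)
The plan is to reduce all three monad axioms to coherence properties of graph substitution---associativity and unitality---which are recorded in the companion paper \cite{modular_paper_one}. Before touching the axioms, I would first check that $\mu$ and $\eta$ are genuine natural transformations. For $\mu$, the maps $\freegraphmonad X[G,\zeta] \to (\freegraphmonad X)(S,\xi)$ of \eqref{eq mu action} must be shown to constitute a cocone over $\conngraph_{\fC}(S,\xi)$, so that they induce a well-defined morphism out of the colimit $\freegraphmonad\freegraphmonad X(S,\xi)$; compatibility with an isomorphism $(f,G,\zeta)\to(f',G',\zeta')$ follows from the functoriality of colored graph substitution established in Construction~\ref{construction assembly}. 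Naturality in $X$ of both $\mu$ and $\eta$ is then immediate, since each is assembled from structural maps of colimits together with the canonical isomorphism $X[G\{H_v\},\tilde\zeta] \cong \bigotimes_{v} X[H_v,\zeta_v]$ (itself resting on the fact that $\otimes$ commutes with colimits in the closed category $\groundcat$), all of which are natural in $X$.

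For the associativity axiom $\mu \circ \freegraphmonad\mu = \mu \circ \mu\freegraphmonad$, I would unwind both composites at an object $(S,\xi)$. A summand of the colimit defining $\freegraphmonad\freegraphmonad\freegraphmonad X(S,\xi)$ is represented by a nested datum: a colored graph $(G,\zeta)$ over $(S,\xi)$, colored graphs $H_v$ substituted at the vertices $v$ of $G$, and colored graphs $K_w$ substituted at the vertices $w$ of the $H_v$. The composite $\mu\circ\freegraphmonad\mu$ first forms the inner substitutions $H_v\{K_w\}$ and then substitutes these into $G$, producing $G\{H_v\{K_w\}\}$; the composite $\mu\circ\mu\freegraphmonad$ first forms $G\{H_v\}$ and then substitutes the $K_w$ at its vertices, producing $(G\{H_v\})\{K_w\}$. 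These agree precisely because iterated graph substitution is associative, $G\{H_v\{K_w\}\} \cong (G\{H_v\})\{K_w\}$, and because the induced colorings $\tilde\zeta$ of Construction~\ref{construction assembly} are transported compatibly across this identification. Passing to colimits over the respective indexing groupoids then yields the equality of natural transformations.

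For the unit axioms I would use that $\eta$ is built from the star graphs $\medstar_S$ via the identification $X[\medstar_S,\xi^\medstar] = X(S,\xi)$ from Definition~\ref{def star coloring}. The law $\mu \circ \freegraphmonad\eta = \id$ amounts to substituting the star $\medstar_{\nbhd(v)}$ at each vertex $v$ of a decorated graph $G$, and the right-unit identity $G\{\medstar_{\nbhd(v)}\} \cong G$ recovers the original datum. Dually, $\mu \circ \eta\freegraphmonad = \id$ amounts to substituting a single graph $G$ into the unique vertex of $\medstar_S$, and the left-unit identity $\medstar_S\{G\} \cong G$ gives the claim. In each case one verifies that the coloring $\tilde\zeta$ reduces to $\zeta$ and that the structural maps into the relevant colimit match on the nose.

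The main obstacle I anticipate is bookkeeping rather than conceptual: carefully tracking the colorings $\zeta$, $\zeta_v$, and $\tilde\zeta$ through the nested substitutions and confirming that the associativity and unit isomorphisms of graph substitution are compatible with them, so that the diagrams of colimits genuinely commute. A secondary subtlety is the degenerate behavior when some $H_v$---or $G$ itself---has no vertices, where the corresponding tensor factors collapse to the unit $1$; here I would invoke the analysis of Remark~\ref{remark mu when G lacks vertices} to confirm that the structural maps still factor correctly. Finally, one must check that nodeless loops, which lie in $\conngraph_{\fC}$ but not in $\graphicalcat$, are handled uniformly throughout the above, so that no case is inadvertently omitted from the cocone arguments.
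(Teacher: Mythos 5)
Your proposal is correct and takes essentially the same route as the paper: the paper's entire proof is a one-line reduction to the associativity and unitality of graph substitution (citing \cite[Theorem 5.32; Lemma 5.31]{yj15}), which is exactly the reduction you carry out, only with the bookkeeping (naturality of $\mu$ and $\eta$, nested colorings, vertex-free degenerate cases) made explicit. The only cosmetic difference is that you attribute the substitution coherences to the companion paper \cite{modular_paper_one}, whereas the paper cites Yau--Johnson \cite{yj15}.
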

\begin{proof}
Graph substitution is associative and unital (\cite[Theorem 5.32; Lemma 5.31]{yj15}) which implies the result.
\end{proof}

\begin{definition}\label{def: csm fixed color}
Given an involutive set of colors $\mathfrak{C}$, the category of algebras over the monad $(\freegraphmonad_{\fC}, \mu, \eta)$ on $\groundcat^{\bijcat_{\fC}}$ is denoted by $\csm_{\fC}(\groundcat)$. Objects of $\csm_{\fC}(\groundcat)$ are called \emph{modular operads} in $\groundcat$ with objects $\fC$. 
\end{definition} 

Given a map $f: \fC \to \mathfrak{D}$ of involutive sets, there is corresponding adjoint pair
\[
	f_! : \csm_{\fC}(\groundcat) \leftrightarrows \csm_{\mathfrak{D}}(\groundcat) : f^*
\]
where $(f^*X)(S,\xi) = X(S,f\circ \xi)$.
It is evident that $(gf)^* = f^*g^*$, so it follows that $g_!f_! = (gf)_!$.

\begin{definition}
\label{definition CSM}
	Let $\csm(\groundcat)$ denote the category of all modular operads. If $X$ has objects $\fC$ and $Y$ has objects $\mathfrak{D}$, then 
	\[
		\csm(\groundcat)(X,Y) = \coprod_{f: \fC \to \mathfrak{D}} \csm_{\fC}(\groundcat)(X, f^*Y)
	\]
	where $f$ ranges over all maps of involutive sets $\fC \to \mathfrak{D}.$
	Composition of morphisms is as usual in the Grothendieck construction.
	More precisely, if $\iset$ is the category of involutive sets, then there is a functor $\iset^{\oprm} \to \cat$ that sends $\mathfrak{C}$ to $\csm_{\fC}(\groundcat)$ and $f : \fC \to \mathfrak{D}$ to $f^*$ defined above.
	Then $\csm(\groundcat) \to \iset$, sending a modular operad to its involutive set of colors, is the associated Grothendieck (cartesian) fibration.
\end{definition}

Each of the categories $\csm_{\fC}(\groundcat)$ will be complete or cocomplete when $\groundcat$ is.
Completeness is standard, while for cocompleteness one should check that $\freegraphmonad_{\fC}$ is a finitary monad.
In her thesis, Sophie Raynor shows that there is a colored operad whose category of algebras is $\csm_{\fC}(\groundcat)$ \cite[\S4.5]{Raynor:Thesis}, which implies this fact.

\begin{remark}
	Since each $f^*$ has a left adjoint $f_!$, the functor $\csm(\groundcat) \to \iset$ is actually a \emph{bifibration} (see, for instance, \cite[Lemma 9.1.2]{Jacobs:CLTT}).
	Given any bifibration with bicomplete base and bicomplete fibers, the total category is also bicomplete (this is classical, see Exercise 9.2.4, p.531 of \cite{Jacobs:CLTT}).
	Since $\iset$ and all $\csm_{\fC}(\groundcat)$ are bicomplete when $\groundcat$ is, it follows that $\csm(\groundcat)$ is also bicomplete when $\groundcat$ is.
\end{remark}

\begin{remark}\label{remark about joyal-kock}
The category of colored modular operads of Definition~\ref{definition CSM} was introduced in \cite{JOYAL2011105}, under the name `compact symmetric multicategories,' using a related monad but only for $\groundcat = \Set$.
One benefit to their approach is that it used a single monad, rather than one for each involutive set of colors. 
One drawback is that it is not clear how to generalize to the cases when $\groundcat$ is different from $\Set$.
Note that in the third paragraph of \S5 of \cite{JOYAL2011105}, the monad is not well-defined at level $n=0$; one needs to add in nodeless loops to the collection of graphs to make this correct.
An alternative approach can be found in \cite{Raynor:DLCSM}.
\end{remark}

\begin{remark}\label{remark about generators and relations definition}
At the beginning of the introduction, we said that (monochrome) modular operads may be specified by composition operations and by contraction operations, which satisfy a small collection of axioms.
Appropriate presentations appear in the non-skeletal setting in Definition 2.1 of \cite{Doubek:MECOA} (stable) and Definition A.4 of \cite{Markl:MEONMO} (unstable).
Of course this works just as well for the $\fC$-colored modular operads of Definition~\ref{def: csm fixed color}, with the understanding that one should replace finite sets with finite sets over $\fC$ and that compositions and contractions will be defined only when the colorings match; this was laid out in \cite[\S2.2]{Raynor:Thesis}.
All of these references cover the case of \emph{non-unital} modular operads.
To our knowledge there is not a similar presentation for the skeletal context (as in Remark~\ref{remark on skeletality}) in the literature.
However, for the case of cyclic operads (with units and involutive color sets), where we have compositions but no contractions, such a system is included in the paper \cite{DrummondColeHackney:DKHCO} of Drummond-Cole and the first author.
In any case, we expect that these types of `biased' definitions of modular operads would play a key role in determining whether modular operads are equivalent to strict inner Kan $\graphicalcat$-presheaves.
\end{remark}

\subsection{The modular operad associated to a graph}\label{sections: graphical csm}
Let us consider $\fC$-colored modular operads with underlying symmetric monoidal category $\groundcat = \Set$.
There is an adjunction
\[
	F_{\fC} \colon \Set^{\bijcat_{\fC}} \rightleftarrows \csm_{\fC} \colon U_{\fC},
\]
(where $U_{\fC}F_{\fC} = \freegraphmonad_{\fC}$)
which we can use to produce new modular operads.
In particular, if $G$ is a graph then we can produce a modular operad $\modopgenned{G}$ whose operations are generated by the vertices of $G$.

\begin{definition}[The modular operad generated by a graph]\label{definition_graphical_modular_operad}
Suppose $G$ is a connected,\footnote{This definition is nearly correct for disconnected graphs as well, but does not produce the expected answer when $G$ has more than one isolated vertex.} possibly unsafe, graph with set of arcs $A$ and set of vertices $V$.
\begin{enumerate}
\item If $\wp(A)$ is the power set of $A$, we consider the object $\underline{G}_{\wp}$ in $\Set^{\wp(A)}$ satisfying $\underline{G}_{\wp} (Z)$ is a point if $Z = i \nbhd(v)$ for some $v$, and otherwise $\underline{G}_{\wp} (Z)$ is empty.
\item The power set $\wp(A)$ of subsets of $A$ includes into the groupoid $\bijcat_A$ by sending a subset $Z \subseteq A$ to $(Z, \incl)$ (see Notation~\ref{notation inclusion}).
We write $\unnameddecoration{G} \in \Set^{\bijcat_A}$ for the left Kan extension of $\underline{G}_{\wp}$.
\item More concretely, $\unnameddecoration{G} \in \Set^{\bijcat_A}$ is given by
\[
	\unnameddecoration{G}(S,\xi)=
		\begin{cases} 
			\{(v,\xi)\} & \text{if $\xi : S \to A$ is injective and $\xi(S) = i\nbhd(v)$} \\ 
			\varnothing & \text{otherwise.} 
		\end{cases}
\]
\item Define a (free $A$-colored) modular operad, \emph{the modular operad generated by $G$}, as $\modopgenned{G}=F_A(\unnameddecoration{G}) \in \csm_A$.
\end{enumerate}
\end{definition} 

Given that an $\fC$-colored modular operad is an algebra over the monad $\freegraphmonad_{\fC}$ in Section~\ref{subsection CSM}, we see that an element in $\modopgenned{G}$ is represented by a $\unnameddecoration{G}$-decorated graph; let us unravel this a bit. 
If $(S,\xi)$ is an object of $\bijcat_A$, then 
\[ \modopgenned{G}(S,\xi) =  (\freegraphmonad \unnameddecoration{G})(S,\xi) = \colim_{(f,K,\zeta) \in \conngraph_{A}(S,\xi)} \unnameddecoration{G}[K,\zeta].\]
Here, $(K,\zeta)$ is an $A$-colored graph (which may be a nodeless loop, see Example~\ref{examples edge and loop}), $f: S \to \eth(K)$ is a bijection so that $\zeta|_{\eth(K)} \circ f = \xi$, and
\[ \unnameddecoration{G}[K,\zeta] =  \prod_{w\in V(K)} \unnameddecoration{G}(\dnbhd{w}, \zeta|_{\dnbhd{w}}). \]
Given the structure of $\unnameddecoration{G}$, the set $\unnameddecoration{G}[K,\zeta]$ will be a point just when, for each $w\in V(K)$, the function $\zeta|_{\dnbhd{w}}$ constitutes a bijection $\dnbhd{w} \to i\nbhd(v)$ for some (unique, since $G$ is connected) vertex $v\in V(G)$.
In all other cases, $\unnameddecoration{G}[K,\zeta]$ is the empty set.

\begin{remark}\label{remark embeddings}
Let $G$ be a safe graph.
An important special case of elements of $\modopgenned{G}$ come from embeddings in the sense of Definition~\ref{def nat trans}.
Specifically, if $K$ is an object of $\graphicalcat$ and $f : K\hookrightarrow G$ is an embedding, then the maps $A(K) \to A(G)$ and $V(K) \to V(G)$ constitute a $\unnameddecoration{G}$-decoration of $K$.
There's a slight ambiguity about where in $\modopgenned{G}$ to locate this element, and we make the following choice.
We have the factorization
\[ \begin{tikzcd}
\eth(K) \dar[hook] \rar{\cong} & \eth(f) \dar[hook] \\
A(K) \rar["f"] & A(G)
\end{tikzcd} \]
and we write $(f|_{\eth(K)})^{-1} : \eth(f) \to \eth(K)$ for inverse of the top map.
Then we associate to $f$ the object $((f|_{\eth(K)})^{-1}, K, f)$ in  $\conngraph_{A(G)}(\eth(f), \incl)$ (see Notation~\ref{notation inclusion} and the discussion following Definition~\ref{definition graph groupoids}), so we think of $f$ as representing an object of $\modopgenned{G}(\eth(f),\incl)$.
There are other choices about where this element should live; for example, we could have it live in $\modopgenned{G}(\eth(K),f|_{\eth(K)})$.
A primary benefit to our choice is that it is invariant under isomorphism of embeddings: if $f_1 : K_1 \to G$ and $f_2 : K_2 \to G$ are two embeddings with $f_1 = f_2z$ ($z$ an isomorphism), then $\eth(f_1) = \eth(f_2)$.
The isomorphism $z$ lives in $\conngraph_{A(G)}(\eth(f_1), \incl)$ so $f_1$ and $f_2$ will represent the same element of $\modopgenned{G}(\eth(f_1),\incl)$.
Had we made the alternative choice, where $f_j$ represents an element of $\modopgenned{G}(\eth(K_j), f_j|_{\eth(K_j)})$, then these two elements would not even be immediately comparable.

In summary, we've both shown how elements of $\bigembeddings(G)$ produce elements of $\modopgenned{G}$, and also that this association factors through $\embeddings(G)$.
That is, we have an inclusion 
\[
	\embeddings(G) \subseteq \coprod_{Z \subseteq A} \modopgenned{G}(Z,\incl).
\]
Be careful, though: if $Z \subseteq A$ is of order two, we may have distinct elements of $\modopgenned{G}(Z,\incl)$ which are both represented by embeddings, just as in \cite[\ONEembuniqueness]{modular_paper_one}.
\end{remark}

\begin{example}\label{example_trivial_graphical_modular}
Let $G = \exceptionaledge$ be the exceptional edge. 
We have $A=  \{ \edgemajor, \edgeminor \}$ and $V=\varnothing$.
Then $\unnameddecoration{G}$ is the initial object in $\Set^{\bijcat_A}$, that is, $\unnameddecoration{G}(S,\xi)=\varnothing$ for each finite set $S$ and each function $\xi : S \to A$.
As $F_A$ is a left adjoint, this implies that $\modopgenned{G} = F_A(\unnameddecoration{G})$ is initial in $\csm_A$.
The considerations above show that we have 
\[
	|\modopgenned{G}(S,\xi)| = \begin{cases}
		1 & \text{if $\xi : S \to A$ is bijective,} \\
		1 & \text{if $S=\varnothing$, and} \\
		0 & \text{otherwise}.
	\end{cases}
\]
The second line comes from the fact that there are two $A$-colorings of a nodeless loop, but they are isomorphic in $\conngraph_A(\varnothing, \incl)$.
We likewise have two $A$-colorings of the exceptional edge, which are isomorphic in $\conngraph_A$, but are incomparable once we consider the extra structure to make them objects of $\conngraph_A(S,\xi)$ for some $(S,\xi)$.
For any $P\in\csm_{\fC}$, we have $\csm(\modopgenned{\exceptionaledge}, P) \cong \fC$: any map $f$ is determined by $f(\edgemajor) \in \fC$.
\end{example}  

A nodeless loop will also generate the modular operad from this example, as the boundary of $G$ does not factor in the definition of $\modopgenned{G}$.

\begin{example} 
\label{example csm generated by medstarzero}
If $G$ is the isolated vertex $\medstar_0$, then we have $A = \varnothing$ and $V = \{v\}$. 
The resulting object $\modopgenned{\medstar_0}$ is in $\csm_{\varnothing}$, hence only has a single set to define. In this case, $\modopgenned{\medstar_0}(\varnothing, \id_{\varnothing})$ is a point.
In fact, $\csm_{\varnothing}$ is equivalent to the category of sets, and $\modopgenned{\medstar_0}$ is a generator.
\end{example} 

We wish to show that the assignment $G\mapsto \modopgenned{G}$ is the object part of a functor from $\graphicalcat\rightarrow \csm$.
As $\modopgenned{G}$ is a free $A$-colored modular operad, it is easy to define maps out of $\modopgenned{G}$.

\begin{lemma}\label{lemma data of map of graphical csm}
Suppose $G$ is a graph and $P$ is a $\fC$-colored modular operad.
A map
\[ f: \modopgenned{G} \to P \] is equivalent to the data: 
\begin{itemize} 
\item an involutive function $f_0: A\rightarrow \fC$, where $A$ is the set of arcs of $G$, and
\item for each vertex $v\in V(G)$, an element $f_1(v)$ in $P(i\nbhd(v), f_0|_{i\nbhd(v)})$.
\end{itemize} 
\end{lemma}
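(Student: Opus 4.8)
The plan is to chain together three adjunctions, peeling off the data $f_0$ and $f_1$ in turn. First I would use the Grothendieck construction of Definition~\ref{definition CSM}: since $\modopgenned{G}$ has involutive color set $A$ and $P$ has color set $\fC$,
\[
\csm(\modopgenned{G}, P) = \coprod_{f_0 : A \to \fC} \csm_A\bigl(\modopgenned{G}, f_0^* P\bigr),
\]
where $f_0$ ranges over all maps of involutive sets. This already produces the involutive function $f_0$, and reduces the problem to showing that, for a fixed $f_0$, the hom-set $\csm_A(\modopgenned{G}, f_0^* P)$ is naturally identified with the set of families $\{f_1(v)\}_{v \in V}$ with $f_1(v) \in P(i\nbhd(v), f_0|_{i\nbhd(v)})$. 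Here I would record that $(f_0^* P)(S,\xi) = P(S, f_0 \circ \xi)$, as noted after Definition~\ref{def: csm fixed color}.

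Next I would exploit freeness. By Definition~\ref{definition_graphical_modular_operad}, $\modopgenned{G} = F_A(\unnameddecoration{G})$, so the adjunction $F_A \dashv U_A$ gives
\[
\csm_A\bigl(F_A \unnameddecoration{G}, f_0^* P\bigr) \cong \Set^{\bijcat_A}\bigl(\unnameddecoration{G}, U_A f_0^* P\bigr).
\]
Writing $Y = U_A f_0^* P \in \Set^{\bijcat_A}$, we have $Y(S,\xi) = P(S, f_0 \circ \xi)$. Now $\unnameddecoration{G}$ is by construction the left Kan extension $\operatorname{Lan}_\iota \underline{G}_{\wp}$ along the inclusion $\iota : \wp(A) \hookrightarrow \bijcat_A$, where $\wp(A)$ is discrete; so its universal property yields
\[
\Set^{\bijcat_A}\bigl(\unnameddecoration{G}, Y\bigr) \cong \Set^{\wp(A)}\bigl(\underline{G}_{\wp}, \iota^* Y\bigr).
\]
As $\wp(A)$ is discrete, a morphism on the right is just a family of functions $\underline{G}_{\wp}(Z) \to Y(Z, \incl)$ indexed by subsets $Z \subseteq A$; and since $\underline{G}_{\wp}(Z)$ is a singleton when $Z = i\nbhd(v)$ for some $v$ and empty otherwise, such a family amounts to a choice of element of $Y(i\nbhd(v), \incl) = P(i\nbhd(v), f_0|_{i\nbhd(v)})$ for each subset of the form $i\nbhd(v)$.

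The final step, which is also where the only real subtlety lies, is to verify that these subsets are indexed bijectively by the vertices of $G$, so that the data is genuinely a family over $V$. This comes down to the injectivity of the assignment $v \mapsto i\nbhd(v)$. Distinct vertices have disjoint neighborhoods $\nbhd(v) = t^{-1}(v) \subseteq D$, hence disjoint --- and, when nonempty, distinct --- images $i\nbhd(v) \subseteq A$; the only way two vertices could share a value is if both are isolated, giving $i\nbhd(v) = \varnothing = i\nbhd(v')$, but a connected graph has at most one isolated vertex (it would then be $\medstar_0$). Thus $v \mapsto i\nbhd(v)$ is injective, the family over subsets of the form $i\nbhd(v)$ is exactly a family $\{f_1(v)\}_{v\in V}$, and unwinding the three bijections identifies the original map $f$ with the pair $(f_0, \{f_1(v)\})$, as claimed. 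One should still check that these identifications are compatible (natural in the evident sense), but that is routine once the bijections above are in place.
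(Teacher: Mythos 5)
Your proof is correct and follows essentially the same route as the paper's: the Grothendieck-construction decomposition supplying $f_0$, the free--forgetful adjunction $F_A \dashv U_A$, and the universal property of $\unnameddecoration{G}$ as a left Kan extension along $\wp(A) \hookrightarrow \bijcat_A$. The injectivity of $v \mapsto i\nbhd(v)$ that you verify at the end is the same point the paper disposes of via connectedness (see the footnote to Definition~\ref{definition_graphical_modular_operad}, which excludes graphs with more than one isolated vertex), so it is a welcome explicit check rather than a departure from the paper's argument.
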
 

\begin{proof} 
The first piece of data just comes from the fact that $A$ and $\fC$ are the color sets for these modular operads.
The data of a map $\modopgenned{G} \to P$ with underlying color map $f_0 : A \to \fC$ is just a map
$\modopgenned{G} \to f_0^*P$ in $\csm_A$.
But $\modopgenned{G}$ is free in $\csm_A$, so this just amounts to a map of $\bijcat_A$ diagrams $\unnameddecoration{G} \to f_0^*P$.
We of course have 
\[
	f_0^*P(S,\xi) = P(S,f_0\circ \xi).
\]
The result then follows from the description in Definition~\ref{definition_graphical_modular_operad} of $\unnameddecoration{G}$ as a left Kan extension.
\end{proof}

\begin{remark}[Composition of maps between graphical modular operads]\label{remark kleisli composition}
Let us describe composition of maps appearing in Lemma~\ref{lemma data of map of graphical csm} whose targets are also modular operads generated by graphs.
As might be expected, this looks a bit like Kleisli composition, but adjusted for the fact that $\csm$ is the Grothendieck construction associated to $\fC \mapsto \csm_{\fC}$ (Definition~\ref{definition CSM}).
	Specifically, suppose that $f : \modopgenned{G} \to \modopgenned{H}$ and $g: \modopgenned{H} \to \modopgenned{K}$ are modular operad maps.
	By the lemma, this is equivalent to maps
	\begin{align*}
		f_1: \unnameddecoration{G} &\to f_0^* \freegraphmonad_{A(H)} (\unnameddecoration{H}) \\
		g_1: \unnameddecoration{H} &\to g_0^* \freegraphmonad_{A(K)} (\unnameddecoration{K})
	\end{align*}
	in the diagram categories $\Set^{\bijcat_{A(G)}}$ and $\Set^{\bijcat_{A(H)}}$.
The map $g_0^*$ is a functor from $\Set^{\bijcat_{A(K)}}$ to $\Set^{\bijcat_{A(H)}}$ with $(g_0^*X)(S,\xi) = X(S,g_0\circ \xi)$; 
likewise, $g_0$ also induces a functor $g_0^*$ between $\csm_{A(K)} \to \csm_{A(H)}$ satisfying $U_{A(H)}g_0^* = g_0^*U_{A(K)}$.
Applying the first of these to the unit $\eta : \id \Rightarrow \freegraphmonad_{A(K)} = U_{A(K)}F_{A(K)}$ for the monad $\freegraphmonad_{A(K)}$ gives a natural transformation 
\[ g_0^* \Rightarrow g_0^*\freegraphmonad_{A(K)} = g_0^*U_{A(K)}F_{A(K)} = U_{A(H)}g_0^*F_{A(K)}. \]
Taking adjoints gives a natural transformation
	\begin{equation}\label{equation unlabeled arrow}
		F_{A(H)}g_0^* \Rightarrow g_0^*F_{A(K)}
	\end{equation}
of functors $\Set^{\bijcat_{A(K)}} \to \csm_{A(H)}$.
	To get $(g \circ f)_1$, we use the diagram in Figure~\ref{figure composition graphical modular}, 
where $\mu$ is the multiplication of the monad $\freegraphmonad_{A(K)}$.

\begin{figure}[t]
\[ \begin{tikzcd}[column sep=large]
\unnameddecoration{G} 
	\rar{f_1} \arrow[dddd, "(g\circ f)_1"]&  
f_0^* \freegraphmonad_{A(H)} (\unnameddecoration{H}) 
	\dar{f_0^* \freegraphmonad_{A(H)}(g_1)}  \\ 
	 & 
f_0^* \freegraphmonad_{A(H)}g_0^* \freegraphmonad_{A(K)} (\unnameddecoration{K}) 
	\dar{=}  \\ &
f_0^* U_{A(H)} F_{A(H)}g_0^* \freegraphmonad_{A(K)} (\unnameddecoration{K}) 
	\dar{\eqref{equation unlabeled arrow}} \\
& f_0^* U_{A(H)} g_0^*F_{A(K)} \freegraphmonad_{A(K)} (\unnameddecoration{K}) \dar{=} \\
(g_0\circ f_0)^* \freegraphmonad_{A(K)} (\unnameddecoration{K}) & 
f_0^* g_0^*\freegraphmonad_{A(K)} \freegraphmonad_{A(K)} (\unnameddecoration{K}) 
	\lar{(g_0 \circ f_0)^* \mu} 
\end{tikzcd}
\]
\caption{Composition of maps between graphical modular operads}\label{figure composition graphical modular}
\end{figure}
\end{remark}

We wish to extend the assignment $G\mapsto \modopgenned{G}$ to a functor $\graphicalcat\rightarrow \csm$. 
As we have seen, defining maps out of $\modopgenned{G}$ is straightforward, since $\modopgenned{G}$ is free in $\csm_{A}$. 
We use  Remark~\ref{remark embeddings} to regard embeddings as elements of $\modopgenned{G}$. 

\begin{definition}[Assignment on morphisms]\label{definition functor to CSM}
Suppose that $\varphi : G \to G'$ is a graphical map in $\graphicalcat$.
Define a morphism of modular operads $f : \modopgenned{G} \to \modopgenned{G'}$, using Lemma~\ref{lemma data of map of graphical csm}, as follows:
\begin{itemize}
\item The map of involutive sets $f_0 : A \to A'$ is just $\varphi_0$.
\item Each $\varphi_1(v) \in \embeddings(G')$ determines an element of $\modopgenned{G'}(\eth(\varphi_1(v)),\incl)$ by Remark~\ref{remark embeddings}.
There is an isomorphism $(\eth(\varphi_1(v)),\incl) \cong (i\nbhd(v),\varphi_0|_{i\nbhd(v)})$ in $\bijcat_{A'}$ by Definition~\ref{graphical category definition}\eqref{graphical map defn: boundary compatibility}, and we let
\[
	f_1(v) \in \modopgenned{G'}(i\nbhd(v),\varphi_0|_{i\nbhd(v)}) \cong \modopgenned{G'}(\eth(\varphi_1(v)),\incl)
\]
be the element corresponding to $\varphi_1(v) \in \embeddings(G')$.
\end{itemize}
\end{definition}

Each isomorphism of $\graphicalcat$ maps to an isomorphism of modular operads.
In Lemma~\ref{lemma on iso classes} we give a partial converse to this statement.
Notice in this lemma that the graphs $G$ and $G'$ are in $\graphicalcat$; in particular, neither of these graphs is a nodeless loop.
Of course nodeless loops will generate the same modular operad as the exceptional edge (the initial object in $\csm_{2\{*\}}$ as in Example~\ref{example_trivial_graphical_modular}), though these are not isomorphic graphs.
See the paragraph preceding Remark~\ref{remark egc nerve theorem} where we consider this extension.
A discussion on a similar topic in the directed setting appears in Section 2 of \cite{hry_factorizations}.

\begin{lemma}\label{lemma on iso classes}
Suppose that $G$ and $G'$ are graphs in $\graphicalcat$.
	If $f: \modopgenned{G} \to \modopgenned{G'}$ is an isomorphism of modular operads, then there exists an isomorphism $\varphi : G \to G'$ in $\graphicalcat$ so that $\varphi \mapsto f$.
\end{lemma}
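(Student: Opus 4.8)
The plan is to recover the graph isomorphism from the combinatorics of the free modular operads, by showing that $f$ must carry the generators (the single-vertex decorated graphs) of $\modopgenned{G}$ to generators of $\modopgenned{G'}$. First I would record that, since $\csm \to \iset$ is a fibration (Definition~\ref{definition CSM}) and functors preserve isomorphisms, the underlying color map $f_0 : A \to A'$ is an isomorphism of involutive sets, with inverse $g_0 = f_0^{-1}$ the color part of $g := f^{-1}$. By Lemma~\ref{lemma data of map of graphical csm}, $f$ and $g$ are encoded by $f_0$ together with elements $f_1(v) \in \modopgenned{G'}(i\nbhd(v), f_0|_{i\nbhd(v)})$ for $v \in V(G)$, and dually $g_1(w)$ for $w \in V(G')$. (The degenerate case $V(G) = \varnothing$, where $G$ is an exceptional edge as in Example~\ref{example_trivial_graphical_modular}, is handled directly and forces $V(G') = \varnothing$ as well.)

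The key device is a vertex-counting invariant. Every element of $\modopgenned{G'}(S,\xi)$ is an isomorphism class of a $\unnameddecoration{G'}$-decorated graph $(K,\zeta)$, since the colimit of Definition~\ref{definition_graphical_modular_operad} is taken over a groupoid; thus $\nu(x) := |V(K)|$ is a well-defined function to $\mathbb{N}$, and it is additive under graph substitution, hence under the Kleisli-type composition of Remark~\ref{remark kleisli composition}. Write $a_v = \nu(f_1(v))$ and $b_w = \nu(g_1(w))$. Since $g\circ f = \id$, the datum $(g\circ f)_1(v)$ is the generator corresponding to $v$, represented by a one-vertex star, so $\nu = 1$. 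I would first argue $a_v \geq 1$ and $b_w \geq 1$ for all $v,w$: if some $f_1(v)$ were represented by a vertex-free graph (an exceptional edge or nodeless loop), then substituting into it in $g\circ f$ would again produce a vertex-free element, contradicting $\nu((g\circ f)_1(v)) = 1$. Granting this, additivity yields $1 = \nu((g\circ f)_1(v)) = \sum_{u} b_{w(u)} \geq |V(K_v)| = a_v \geq 1$, where $u$ ranges over the vertices of the graph $K_v$ representing $f_1(v)$ and $w(u) \in V(G')$ is its decoration; this forces $a_v = 1$, and symmetrically $b_w = 1$.

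Consequently each $f_1(v)$ is a single generator, represented by a one-vertex star decorated by a unique $d(v) \in V(G')$; by Remark~\ref{remark embeddings} this is exactly the class of the embedding $\iota_{d(v)}$. This defines $d : V(G) \to V(G')$, and symmetrically $d' : V(G') \to V(G)$; tracing $g\circ f = \id$ and $f\circ g = \id$ through the substitution shows $d'd = \id$ and $dd' = \id$, so $d$ is a bijection. Finally I would assemble $\varphi = (f_0, d)$: the identification $f_1(v) = [\iota_{d(v)}]$ together with the color identification $(i\nbhd(v), f_0|) \cong (\eth(\iota_{d(v)}), \incl)$ forces $f_0$ to carry $i\nbhd(v)$ bijectively onto $i\nbhd(d(v))$; since $D$ is the disjoint union of the $\nbhd(v)$, this gives $f_0(D) = D'$ and compatibility with $s,t,i$, so $\varphi$ is a graph isomorphism and hence an isomorphism of $\graphicalcat$. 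By construction $\varphi_0 = f_0$ and $\varphi_1(v) = [\iota_{d(v)}]$, so $\varphi \mapsto f$ under the functor of Definition~\ref{definition functor to CSM}.

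The main obstacle is the step ruling out \emph{ghost} vertices: a priori $f_1(v)$ could be a decorated graph with several bivalent vertices whose $g$-images are exceptional edges (units), which would collapse under substitution and so could conspire to leave only one surviving vertex. The observation that a vertex-free summand persists as a vertex-free element under composition --- giving $a_v, b_w \geq 1$ --- is precisely what eliminates this possibility and pins each $f_1(v)$ to a single generator; the remainder is bookkeeping with the color bijection $f_0$.
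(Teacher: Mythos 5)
Your strategy is essentially the paper's own: use the composition identities $(g\circ f)_1 = \eta$ and $(f\circ g)_1 = \eta$ to force each $f_1(v)$ to be a generator, extract a vertex bijection, and assemble the graph isomorphism from this together with $f_0$. Your preliminary step (a vertex-free representative would stay vertex-free after substitution, so $a_v, b_w \geq 1$) is also the paper's first step. But there is a genuine gap at the decisive moment: from $a_v = 1$ you conclude that $f_1(v)$ is ``a single generator, represented by a one-vertex star.'' That implication fails. The colimit defining $\modopgenned{G'}$ runs over \emph{all} connected colored graphs, and a connected graph with exactly one vertex need not be a star: it may have loops at its vertex. A one-vertex graph with $\ell \geq 1$ loops whose vertex is decorated by $w \in V(G')$ (such decorations exist whenever $w$ itself supports loops in $G'$ and the colorings match, with $|\nbhd(w)| = |\nbhd(v)| + 2\ell$) represents a nontrivial contraction of the generator $w$, which is an element of $\modopgenned{G'}$ distinct from the generator. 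Your invariant $\nu$ counts vertices only, so it is blind to this possibility; and your final bookkeeping genuinely needs it to be excluded, since for such an $f_1(v)$ the map $f_0$ would carry $i\nbhd(v)$ injectively onto a \emph{proper} subset of $i\nbhd(d(v))$ (missing the loop arcs), and the assembly of $\varphi = (f_0,d)$ would break down.

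The repair is short, and it is exactly the extra clause in the paper's proof: loops, like vertices, are preserved by substitution provided none of the graphs being substituted in is an edge --- which you have already guaranteed, since every representative of every $g_1(w)$ has at least one vertex. Concretely, once all representatives are known to be one-vertex graphs, loop counts add under substitution; so if $f_1(v)$ were represented by a one-vertex graph with $\ell \geq 1$ loops, then $(g\circ f)_1(v)$ would be represented by a one-vertex graph with at least $\ell$ loops. Since the indexing category of the colimit is a groupoid, non-isomorphic decorated graphs give distinct elements, so this cannot equal $(g \circ f)_1(v) = \eta(v,\incl)$, whose representative is the loop-free star $(\medstar_{i\nbhd(v)}, \xi^\medstar)$; hence $\ell = 0$. (The paper phrases this as: if $g_1(v,\incl)$ has more than one vertex \emph{or a loop at a vertex}, then so does the composite.) With this observation inserted before your ``consequently,'' the remainder of your argument --- the bijection $d$ and the assembly of $\varphi$ --- goes through as written.
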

\begin{proof}
As we know $f_0 : A \to A'$ is an involutive isomorphism, we replace (strictly for convenience) $G'$ with an isomorphic graph $H$ which has the same set of arcs as $G$ and the same vertices as $G'$.
\[ \begin{tikzcd}
\modopgenned{G} \rar["g"] \arrow[dr, "f" swap] & \modopgenned{H} \arrow[d, "\cong" swap] \\
& \modopgenned{G'}
\end{tikzcd} \]
It is sufficient to show that the induced isomorphism $g : \modopgenned{G} \to \modopgenned{H}$ comes from an isomorphism in $\graphicalcat$.
Let $h$ be the inverse to $g$.

We are now just working in $\csm_{A}$, the category of algebras for $\freegraphmonad_A = \freegraphmonad$.
The composition diagram in Remark~\ref{remark kleisli composition} simplifies to the usual Kleisli composition diagrams.
\[ 
\begin{tikzcd}
\unnameddecoration{G} \rar{g_1} \dar[swap]{(h\circ g)_1} & \freegraphmonad(\unnameddecoration{H}) \dar{\freegraphmonad(h_1)} \\
\freegraphmonad (\unnameddecoration{G}) & \freegraphmonad \freegraphmonad (\unnameddecoration{G}) \lar["\mu_{\unnameddecoration{G}}"] 
\end{tikzcd} 
\qquad \& \qquad
\begin{tikzcd}
\unnameddecoration{H} \rar{h_1} \dar[swap]{(g\circ h)_1} & \freegraphmonad(\unnameddecoration{G}) \dar{\freegraphmonad(g_1)} \\
\freegraphmonad (\unnameddecoration{H}) & \freegraphmonad \freegraphmonad (\unnameddecoration{H}) \lar["\mu_{\unnameddecoration{H}}"] 
\end{tikzcd} 
\]
By the assumption that $h= g^{-1}$, we have that $(h\circ g)_1 = \eta_{\unnameddecoration{G}}$ and $(g\circ h)_1 = \eta_{\unnameddecoration{H}}$, where $\eta$ is the unit of the monad.

Suppose that $v$ is a vertex of $G$; then the map
\[
	g_1 : \unnameddecoration{G}(i\nbhd(v), \incl) \to \freegraphmonad(\unnameddecoration{H})(i\nbhd(v), \incl)
\]
takes $(v,\incl)$ (see Definition~\ref{definition_graphical_modular_operad}) to some $\unnameddecoration{H}$-decorated graph $K$.
The (connected) graph $K$ must have at least one vertex, since $(h\circ g)_1(v,\incl)$ is a star, thus is not an edge.
Similarly, in the right-hand diagram we have that $h_1$ only produces graphs that have at least one vertex.

Now if $g_1(v,\incl)$ has more than one vertex, or a loop at a vertex, then so does $(h\circ g)_1(v,\incl)$ since $h_1$ does not send vertices to edges and thus these are preserved by $\mu$. 
Since $(h\circ g)_1(v,\incl)$ is $(\medstar_{i\nbhd(v)}, \xi^\medstar)$, we thus know that $g_1(v,\incl)$ is a $\unnameddecoration{H}$-decorated star.
Likewise, $h_1$ produces only $\unnameddecoration{G}$-decorated stars.

It follows that $g_1$ and $h_1$ induce a bijection between $V(G)$ and $V(H)$.
If $w$ is the vertex of $H$ that is associated to $g_1(v)$, then we have $i : \nbhd(w) \to i\nbhd(v)$ is a bijection, so $\nbhd(v) = \nbhd(w)$.
Thus $g$ induces an isomorphism $G\cong H$ of graphs.
\end{proof}

\begin{proposition}\label{proposition functor graphical to csm} 
The assignment $G \mapsto \modopgenned{G}$ on objects from Definition~\ref{definition_graphical_modular_operad} and the assignment on graphical maps from Definition~\ref{definition functor to CSM} constitute a faithful functor $J : \graphicalcat \to \csm$ which is injective on isomorphism classes of objects.
\end{proposition}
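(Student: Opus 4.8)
The plan is to verify the three assertions of the proposition—that $J$ is a functor, that it is faithful, and that it is injective on isomorphism classes—treating the latter two as essentially immediate consequences of results already established and concentrating the real work on functoriality. For functoriality, preservation of identities is straightforward: the identity graphical map $\id_G$ sends each vertex $v$ to the class of its star embedding $\iota_v : \medstar_v \to G$ (this is forced by condition~\eqref{graphical map defn: boundary compatibility} of Definition~\ref{graphical category definition}, since $\eth(\iota_v) = i\nbhd(v)$), and under Definition~\ref{definition functor to CSM} this is carried to the generating element $(v,\incl)$ of the free modular operad $\modopgenned{G}$ as in Definition~\ref{definition_graphical_modular_operad}. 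A map of free algebras that is the identity on generators is the identity, so $J(\id_G) = \id_{\modopgenned{G}}$.

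Preservation of composition is the substantial point. Given $\varphi : G \to G'$ and $\psi : G' \to G''$, I would compare the graph-substitution description of $\psi \circ \varphi$ in Definition~\ref{def graph map composition} with the Kleisli-style composite of $J(\psi)$ and $J(\varphi)$ laid out in Remark~\ref{remark kleisli composition} and Figure~\ref{figure composition graphical modular}. Concretely, $J(\varphi)_1(v)$ is represented by an embedding $\varphi_v : K_v \hookrightarrow G'$; applying $J(\psi)$ through $\freegraphmonad$ replaces each vertex $w$ of $K_v$ by the embedding $\psi_w : H_w \hookrightarrow G''$ representing $J(\psi)_1(w)$, and the monad multiplication $\mu$ implements graph substitution, assembling the $\psi_w$ into the single embedding $K_v\{H_w\} \hookrightarrow G''$ of \eqref{eq composite embedding}. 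Since this is precisely $(\psi\circ\varphi)_1(v)$, while the arc maps obviously satisfy $(\psi\circ\varphi)_0 = \psi_0 \circ \varphi_0$, the two composites agree. Here I would lean on the compatibility between the monad multiplication and graph substitution (Proposition~\ref{proposition free graph monad}) together with Remark~\ref{remark embeddings} to identify the assembled decoration with the class of the composite embedding.

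Faithfulness is then quick. If $\varphi, \varphi' : G \to G'$ satisfy $J(\varphi) = J(\varphi')$, comparing the two pieces of data of Lemma~\ref{lemma data of map of graphical csm} gives first $\varphi_0 = \varphi_0'$, and second, for each vertex $v$, that $\varphi_1(v)$ and $\varphi_1'(v)$ map to the same element of $\modopgenned{G'}$ under the assignment of Remark~\ref{remark embeddings}. Because that assignment is an \emph{injection} $\embeddings(G') \hookrightarrow \coprod_{Z\subseteq A'} \modopgenned{G'}(Z,\incl)$, we conclude $\varphi_1(v) = \varphi_1'(v)$ for every $v$, hence $\varphi = \varphi'$. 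Injectivity on isomorphism classes is exactly Lemma~\ref{lemma on iso classes}: an isomorphism $\modopgenned{G} \cong \modopgenned{G'}$ forces an isomorphism $G \cong G'$ in $\graphicalcat$.

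I expect the main obstacle to be the composition step in functoriality, since it requires unwinding both the graph-substitution composition of graphical maps and the monadic composite of Remark~\ref{remark kleisli composition}, and then checking that the image under $\mu$ of the decorated graph $K_v\{H_w\}$ coincides with the embedding \eqref{eq composite embedding}. The delicate bookkeeping is twofold: tracking colors along the Grothendieck fibration $\csm \to \iset$ so that the $f^*$/$f_!$ manipulations in Figure~\ref{figure composition graphical modular} line up, and repeatedly invoking Remark~\ref{remark embeddings} to ensure that the assembled $\unnameddecoration{G''}$-decoration of $K_v\{H_w\}$ is genuinely the class of the composite embedding rather than merely an \'etale map.
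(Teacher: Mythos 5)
Your proposal is correct and follows essentially the same route as the paper's proof: functoriality is obtained by comparing the Kleisli-style composite of Figure~\ref{figure composition graphical modular} with graph-substitution composition in Definition~\ref{def graph map composition}, injectivity on isomorphism classes is exactly Lemma~\ref{lemma on iso classes}, and faithfulness uses the inclusion $\embeddings(G') \subseteq \coprod_{Z \subseteq A'} \modopgenned{G'}(Z,\incl)$ from Remark~\ref{remark embeddings}. The only difference is that you spell out identity preservation and the substitution-versus-$\mu$ bookkeeping explicitly, which the paper leaves as a terse comparison.
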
 

\begin{proof}
The fact that $\graphicalcat\to \csm$ is a functor follows by comparing Figure~\ref{figure composition graphical modular} from Remark~\ref{remark kleisli composition} with the composition for $\graphicalcat$ (Definition~\ref{def graph map composition}).
Lemma~\ref{lemma on iso classes} shows that the functor is injective on isomorphism classes of objects.
To see that the functor is faithful, suppose that $\varphi$ and $\psi$ are elements of $\graphicalcat(G,G')$ which map to the same morphism $f : \modopgenned{G} \to \modopgenned{G'}$.
Then the maps on color sets $\varphi_0$ and $\psi_0$ are equal.
Further, for each $v\in V(G)$ the element $f_1(v)$ in \[ \embeddings(G') \subseteq \coprod_{Z \subseteq A'} \modopgenned{G'}(Z,\incl),\] is equal to both $\varphi_1(v)$ and $\psi_1(v)$.
\end{proof} 

\begin{example}\label{example about graphicalcat not being full}
The functor $J: \graphicalcat \to \csm$ is not full.
Here we give two examples.
\begin{itemize}
	\item 
Consider the two graphs from Figure~\ref{figure: etale not embedding}.
There is a map from $\modopgenned{G}$ to $\modopgenned{G'}$ sending generators to generators, where each $v_j$ goes to $v$ and each $w_j$ goes to $w$, but there is no graphical map $G \to G'$ which has this behavior.
This example was explained to us by J. Kock, as an illustration of the difference between \'etale and embedding.
\begin{figure}
\begin{center}
\begin{tikzpicture}


\node [plain] (v) {$v_0$};
\node [plain, below=1cm of v] (v2) {$w_0$};
\node [plain, right=1cm of v] (v3) {$v_1$};
\node [plain, below=1cm of v3] (v4) {$w_1$};

\node[right=.5 cm of v] (q) {};
\node [above=.5cm of q] () {$G$};
\draw [-, bend right=20] (v) to node[black, very near end, swap]{\scriptsize{$1$}} node[black, very near start, swap]{\scriptsize{$1$}}   (v2); 
\draw [-, bend left=20] (v3) to node[black, very near end]{\scriptsize{$1$}} node[black, very near start]{\scriptsize{$1$}}  (v4);
\draw [-] (v3) to node[black, very near end]{\scriptsize{$2$}} node[black, very near start]{\scriptsize{$2$}}  (v2);
\draw [-] (v) to node[black, very near end, swap]{\scriptsize{$2$}} node[black, very near start, swap]{\scriptsize{$2$}}  (v4);

\node[right=.3 cm of v3] (q2) {};
\node[below=.5 cm of q2] (q3) {};
\node[right=2 cm of q3] (q4) {};

\node [plain, right=3cm of v3] (s2) {$v$};
\node [plain, below=.8cm of s2] (w) {$w$};
\node [above=.5cm of s2] () {$G'$};

\draw [-, bend right =50] (s2) to  node[black, very near end, swap]{\scriptsize{$1$}} node[black, very near start,swap]{\scriptsize{$1$}}  (w);
\draw [-, bend left=50] (s2) to node[black, very near end]{\scriptsize{$2$}} node[black, very near start]{\scriptsize{$2$}}  (w);

\draw [black, ->, line width= 1pt, shorten >=0.03cm] (q3) to  (q4);

\end{tikzpicture}
\end{center}
\caption{An \'etale map that is not an embedding.}\label{figure: etale not embedding}
\end{figure}
\item There is a map $\modopgenned{\medstar_0} \to \modopgenned{\exceptionaledge}$ which takes the unique vertex of $\medstar_0$ (see Example~\ref{example csm generated by medstarzero}) to the unique element in $\modopgenned{\exceptionaledge}(\varnothing, \incl)$ (see Example~\ref{example_trivial_graphical_modular}).
In contrast, there are no maps $\medstar_0 \to \exceptionaledge$ in $\graphicalcat$.
\end{itemize}
\end{example}

\section{The nerve theorem}\label{section: nerve theorem}
At this point, we have defined a functor $J: \graphicalcat \to \csm$.
One can consider the associated singular functor, or \emph{nerve functor}, which is specified by $N(-) = \hom(J,-)$ and goes from $\csm$ to the category of $\graphicalcat$-presheaves.
The aim of this section is to prove Theorem~\ref{nerve_theorem}, which says both that $N$ is fully-faithful and identifies the essential image.

\begin{definition}\label{definition_modular_nerve}
The \emph{nerve functor for modular operads} is the functor \[\begin{tikzcd} \csm \arrow[r,"N"] & \pregraphcat\end{tikzcd}\]
which is given on a modular operad $P$ and a graph $G\in\graphicalcat$ by
\[NP_{G}= \csm(\modopgenned{G},P). \] Here $\modopgenned{G}$ is the modular operad generated by $G \in \graphicalcat$ (Definition~\ref{definition_graphical_modular_operad}).
\end{definition}

An element of the set $NP_{G}$ is a $P$-\emph{decoration} of the graph $G$ (Definition~\ref{defn obj of decorations}).
This description comes directly from the fact that $NP_{G}=\csm(\modopgenned{G},P)$ and the description of a graphical map given in Lemma~\ref{lemma data of map of graphical csm}.
\begin{remark}
Given a graph $G\in \graphicalcat$, we now have two ways to assign an object of $\pregraphcat$ to $G$.
The first is to consider the representable presheaf $\rgc[G]$, while the second is to first consider the modular operad $\modopgenned{G}$ and then take the nerve.
In light of Example~\ref{example about graphicalcat not being full}, we do not expect them to always be the same.
The representable $\rgc[G]$ is always a subobject of $N\modopgenned{G}$ (since $J$ is faithful), but, in fact, they nearly never coincide.
To see this, let $K$ be the loop with one vertex and let $k$ be one of the two arcs of $K$.
By Lemma~\ref{lemma data of map of graphical csm}, for each arc $a$ of $G$ there is map
\(
	f : \modopgenned{K} \to \modopgenned{G}
\)
which sends $k$ to $a$ and the unique vertex of $K$ to the edge spanned by $a$.
This type of collapse behavior is precisely what is prohibited by \eqref{graphical map defn: collapse condition} of Definition~\ref{graphical category definition}.
Thus the inclusion $\rgc[G]_K \subseteq N\modopgenned{G}_K$ is strict as long as the arc set of $G$ is non-empty.
On the other hand, we have $\rgc[\medstar_0] = N\modopgenned{\medstar_0}$.
\end{remark}

\begin{remark}\label{remark nary operations pullback}
Suppose we are given an object $(S,\xi)$ of $\bijcat_{\fC}$, and let $\medstar_{S}$ be the graph from Definition~\ref{definition star}.
Recall from Example~\ref{examples edge and loop} that we write $A(\exceptionaledge) = \eearcs$ and define, for each $s\in S$, an embedding 
$h_s : \exceptionaledge \to \medstar_S$ which sends $\edgemajor$ to $s$.
There is a natural map 
\[
	\ell_S : NP_{\medstar_S} \to (NP_{\exceptionaledge})^S
\]
which takes an element $x$ to the function $(s\mapsto x\circ h_s) \in \hom(S,NP_{\exceptionaledge})$.
Under the identifications $NP_\exceptionaledge = \csm(\modopgenned{\exceptionaledge}, P) = \hom(\{\edgemajor\}, \fC) = \fC$, we may regard the function $\xi : S \to \fC$ as an element of the codomain of $\ell_S$.
The preimage of $\xi$ under $\ell_S$ is precisely $P(S,\xi)$.
That is, $P(S,\xi)$ is part of the following pullback diagram.
\[
\begin{tikzcd}
	P(S,\xi) \dar \rar & NP_{\medstar_S}\arrow[d, "\ell_S"] \\
	\ast \rar{\xi} &  (NP_{\exceptionaledge})^S \cong \fC^{|S|}
\end{tikzcd}
\]
We will use this frequently in what follows.
\end{remark}

The Segal core inclusions $\segalcore[G] \hookrightarrow \rgc[G]$, from Definition~\ref{def segal core and graph sub}, are induced by the embeddings $\iota_v : \medstar_v\rightarrow G$ (Definition~\ref{definition star}).
This allows us to give the following generalization of the classical Segal condition for categories.

\begin{definition}[Segal objects]\label{definition_Segal_condition}
Suppose that $X$ is a $\graphicalcat$-presheaf (in $\Set$).
\begin{itemize}
	\item The \emph{Segal map at $G$} is the map 
\[\begin{tikzcd}
\pregraphcat(\rgc[G], X) \rar & \pregraphcat(\segalcore[G], X)\end{tikzcd}\] 
induced by the Segal core inclusion $\segalcore[G] \hookrightarrow \rgc[G]$.
\item The presheaf $X$ is said to satisfy the \emph{Segal condition} if, for every $G\in\graphicalcat$, the Segal map at $G$ is a bijection.
\end{itemize}
\end{definition} 

The reader familiar with the work of Chu and Haugseng may wonder about the relation of this definition with \cite[Definition 2.6]{ChuHaugseng:HCAvSC}.
As observed in Example 3.12 of \cite{ChuHaugseng:HCAvSC}, the category $\graphicalcat^{\oprm}$ admits the structure of an `algebraic pattern.' 
A presheaf $X$ satisfies the Segal condition (in our sense) if and only if it is a `$\graphicalcat^{\oprm}$-Segal object in $\Set$.'  

\begin{remark}
If $X$ is an object of $\pregraphcat$ and $G$ is $\medstar_n$ or $\exceptionaledge$, then the Segal map at $G$ is a bijection.
\end{remark}

We are now prepared to state the first main theorem of this paper.
\begin{theorem}
\label{nerve_theorem}
The nerve functor $N: \csm \to \pregraphcat$ is fully faithful.
Moreover, the following statements are equivalent for $X\in \pregraphcat$.
\begin{enumerate} 
\item There exists a modular operad $P$ and an isomorphism $X\cong NP$.
\item $X$ satisfies the Segal condition.
\end{enumerate} 
\end{theorem}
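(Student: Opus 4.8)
The plan is to organize everything around a single structural fact: the functor $J$ carries the Segal-core presentation of a graph to a colimit presentation of the associated modular operad. Concretely, I would first establish that for every $G\in\graphicalcat$ the canonical comparison from the coequalizer
\[
\colim\left( \coprod_{e\in E_i} \modopgenned{\exceptionaledge} \rightrightarrows \coprod_{v\in V} \modopgenned{\medstar_v} \right) \xrightarrow{\;\cong\;} \modopgenned{G}
\]
is an isomorphism in $\csm$ (equivalently, that $J$ applied to the Segal-core coequalizer of Definition~\ref{def segal core and graph sub} is again a coequalizer). Since $\modopgenned{G}=F_A\unnameddecoration{G}$ is a free modular operad and the decoration $\unnameddecoration{G}$ is visibly glued together from the star decorations, this reduces to a computation with the left adjoint $F_A$ and the monad $\freegraphmonad$, using that $\modopgenned{\exceptionaledge}$ is initial (Example~\ref{example_trivial_graphical_modular}).

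Granting this, the implication (1)$\Rightarrow$(2) is immediate: for a nerve $NP$ the Segal map at $G$ is $\csm(\modopgenned{G},P)\to\pregraphcat(\segalcore[G],NP)$, and applying $\csm(-,P)$ to the displayed colimit, together with the Yoneda identifications $\pregraphcat(\rgc[\medstar_v],NP)=NP_{\medstar_v}=\csm(\modopgenned{\medstar_v},P)$, exhibits both sides as the same limit. Hence $NP$ is Segal. For full faithfulness I would use Remark~\ref{remark nary operations pullback} as a reconstruction device. Faithfulness: a map $f\colon P\to Q$ is determined by its color map and its components $P(S,\xi)\to Q(S,f_{\exceptionaledge}\xi)$, and these are exactly the restrictions of $Nf$ at $\exceptionaledge$ and $\medstar_S$ to the pullback fibers, so $Nf=Ng$ forces $f=g$. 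Fullness: given $\alpha\colon NP\to NQ$, naturality along the swap automorphism of $\exceptionaledge$ yields an involutive color map $\alpha_0$, and naturality along the embeddings $h_s\colon\exceptionaledge\to\medstar_S$ shows $\alpha_{\medstar_S}$ respects the pullback squares, hence restricts to maps $P(S,\xi)\to Q(S,\alpha_0\xi)$. Naturality of $\alpha$ along the active and embedding generators of $\graphicalcat$ then translates into compatibility of these components with the modular operad structure, producing $f\colon P\to Q$; that $Nf=\alpha$ follows by checking on stars and exceptional edges and invoking the Segal condition for $NP$ and $NQ$ just established.

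The genuinely hard direction is (2)$\Rightarrow$(1), the construction of a modular operad from a Segal presheaf $X$, which is where I expect the main obstacle and which occupies \S\ref{section mod op segal presheaf}. I would set $\fC=X_{\exceptionaledge}$ with involution induced by the automorphism of $\exceptionaledge$, define $P(S,\xi)$ as the fiber of $X_{\medstar_S}\to (X_{\exceptionaledge})^{S}$ over $\xi$ (so that $NP\cong X$ holds on stars by design), and define the $\freegraphmonad_{\fC}$-algebra structure $\freegraphmonad_{\fC}P\to P$ one colored graph at a time. For $(f,G,\zeta)\in\conngraph_{\fC}(S,\xi)$ the Segal condition identifies $P[G,\zeta]$ with a fiber of $X_G$, and the active map $\medstar_G\to G$ then sends this fiber into the fiber of $X_{\medstar_S}$, that is, into $P(S,\xi)$; this is the candidate structure map.

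The real work is to verify that these assignments are compatible with the coequalizers defining $\freegraphmonad_{\fC}$ and satisfy the monad-algebra associativity and unit laws, which is where associativity of graph substitution and naturality of the Segal isomorphisms must be combined with care. Two points demand extra attention. First, the empty-boundary case: here the relevant colored graphs may be nodeless loops contributing a distinguished element of $P(\varnothing)$, a feature invisible to $X$ directly, so it must instead be produced from loop-at-a-vertex graphs via the contraction operations. Second, the verification that $NP\cong X$ at an arbitrary graph $G$, which again factors through the colimit presentation of $\modopgenned{G}$ and the Segal condition for $X$, closing the circle with the (1)$\Rightarrow$(2) argument.
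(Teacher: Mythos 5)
Your overall architecture coincides with the paper's. Your ``structural fact'' --- that $J$ carries the Segal-core coequalizer of a graph to a coequalizer presentation of $\modopgenned{G}$ in $\csm$ --- is the mapping-out form of the paper's Lemma~\ref{nerve layers as equalizer}, which states that $NP_G$ is the equalizer of $\prod_v NP_{\medstar_v} \rightrightarrows \prod_e NP_{\exceptionaledge}$ and is proved by combining Lemma~\ref{lemma data of map of graphical csm} with Remark~\ref{remark nary operations pullback}; since $\csm$ is cocomplete the two formulations are equivalent, and your deduction that nerves are Segal is then the paper's Lemma~\ref{Lemma_nerve_statisfies_Segal}. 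Your full-faithfulness argument via the pullback description of $P(S,\xi)$ is Proposition~\ref{nerve_fully_faithful} essentially verbatim, and your construction for (2)$\Rightarrow$(1) --- fibers of $X_{\medstar_S} \to (X_{\exceptionaledge})^S$ over $\xi$, with structure maps obtained from the Segal inclusion $\csmx[G,\zeta] \hookrightarrow X_G$ followed by the map induced by the active map $\medstar_S \to G$ --- is exactly Definitions~\ref{def csmx underlying object} and~\ref{def of gamma restriction}.

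The one genuine divergence, and the one genuine gap, is the nodeless-loop issue you flag. The monad $\freegraphmonad_{\fC}$ has components indexed by colored nodeless loops, so the structure map $\gamma$ must assign to each such colored graph an element of $\csmx(\varnothing,\incl)$; moreover nodeless loops arise as substitutions $G\{H_v\}$ (for instance, exceptional edges substituted into every vertex of a cycle), so these values are entangled with the associativity axiom \eqref{heart}. The paper's solution is to work with $\egc$-presheaves throughout \S\ref{section mod op segal presheaf}: a Segal $\graphicalcat$-presheaf is first right Kan extended along $\iota : \graphicalcat \to \egc$, and this extension remains Segal by the companion paper \cite[\ONEsegaltosegal]{modular_paper_one} (Remark~\ref{remark graphcat vs egc segal and associated}); the Segal condition at a nodeless loop $G$ then gives $X_G \cong X_{\exceptionaledge}$, so the required value of $\gamma$ is supplied by the presheaf itself via the active map $\medstar_0 \to G$ in $\egc$, and the vertex-free instances of associativity collapse to a statement about maps out of a point (Lemma~\ref{lemma no vertex algebra lemma}). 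Your alternative --- stay in $\pregraphcat$ and declare the nodeless-loop value to be the contraction of the unit $1_c$ --- does pick out the element that associativity forces, but you give no argument that the resulting $\gamma$ then satisfies associativity on all components whose substitution yields a nodeless loop. For example, for a cycle $W$ with $n$ vertices decorated entirely by units, one must show that restriction along the active map $\medstar_0 \to W$ sends the all-units element of $X_W$ to the contraction of a single unit; verifying this inside $\pregraphcat$ requires constructing and chasing graphical maps such as $W \to K$ ($K$ the one-vertex loop, with all but one vertex of $W$ collapsed to edges), which your proposal never mentions. This verification can be carried out, so your route is salvageable, but it is precisely the coherence that the paper's $\egc$ detour and the companion-paper input are designed to provide, and as written your proposal leaves it open.
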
 

We will need a bit of scaffolding before we can approach the proof of this theorem, which appears below.

Suppose that $G$ is a graph with at least one vertex.
As in Definition~\ref{def segal core and graph sub}, for each internal edge $e\in E_i$, we choose an ordering $e = [x_{e}^1, x_{e}^2]$ for the two-element equivalence class of arcs comprising $e$.
\begin{itemize}
	\item Write $\outeredge_e : \exceptionaledge \to \medstar_{tx_e^1}$ for the embedding that sends $\edgemajor$ to $(x_e^1)^\dagger \in \eth(\medstar_{tx_e^1})$.
	\item Write $\inneredge_e : \exceptionaledge \to \medstar_{tx_e^2}$ for the embedding that sends $\edgemajor$ to $x_e^2 \in D(\medstar_{tx_e^2})$.
\end{itemize}

\begin{lemma}
\label{nerve layers as equalizer}
Suppose that $P$ is a modular operad and $G$ is a graph with at least one vertex. 
There is an equalizer diagram
\begin{equation} \label{nerve equalizer eq} 
\begin{tikzcd}
NP_G \rar & \prod\limits_{v\in V} NP_{\medstar_v} \rar[shift left, "\outeredge^*"] \rar[shift right, "\inneredge^*" swap] & \prod\limits_{e\in E_i} NP_{\exceptionaledge}
\end{tikzcd} \end{equation}
where $\outeredge^*$ and $\inneredge^*$ are defined so that the diagrams
\[ 
\begin{tikzcd}
\prod\limits_{v\in V} NP_{\medstar_v} \rar["\outeredge^*"] \dar["\pi_{tx_e^1}"] & \prod\limits_{e\in E_i} NP_{\exceptionaledge} \dar["\pi_e"] \\
NP_{\medstar_{tx_e^1}} \rar["NP_{\outeredge_e}"]& NP_{\exceptionaledge}
\end{tikzcd} 
\qquad
\begin{tikzcd}
\prod\limits_{v\in V} NP_{\medstar_v} \rar["\inneredge^*"] \dar["\pi_{tx_e^2}"] & \prod\limits_{e\in E_i} NP_{\exceptionaledge} \dar["\pi_e"] \\
NP_{\medstar_{tx_e^2}} \rar["NP_{\inneredge_e}"]& NP_{\exceptionaledge}
\end{tikzcd} 
\]
commute for each $e\in E_i$.
\end{lemma}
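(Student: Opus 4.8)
The plan is to recognize the claimed equalizer as the hom-set $\pregraphcat(\segalcore[G], NP)$ and then to identify the universal map into it with the Segal map at $G$. Recall from Definition~\ref{def segal core and graph sub} that $\segalcore[G]$ is the coequalizer of
\[
\outeredge, \inneredge \colon \coprod_{e\in E_i} \rgc[\exceptionaledge] \rightrightarrows \coprod_{v\in V} \rgc[\medstar_v].
\]
Since $\pregraphcat(-, NP)$ sends colimits to limits, and since $\pregraphcat(\rgc[H], NP) \cong NP_H$ by the Yoneda lemma, this coequalizer is carried to an equalizer diagram of the shape appearing in \eqref{nerve equalizer eq}; a direct comparison of the structure maps shows that the two parallel arrows agree with $\outeredge^*$ and $\inneredge^*$ as specified. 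It therefore suffices to prove that the Segal map $NP_G \to \pregraphcat(\segalcore[G], NP)$ of Definition~\ref{definition_Segal_condition}, induced by the inclusion $\segalcore[G] \hookrightarrow \rgc[G]$, is a bijection.

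Next I would unwind both sides using Lemma~\ref{lemma data of map of graphical csm}. An element of $NP_G$ is a pair $(f_0, (f_1(v))_v)$ consisting of an involutive function $f_0 \colon A \to \fC$ together with elements $f_1(v) \in P(i\nbhd(v), f_0|_{i\nbhd(v)})$. An element of the target equalizer is a family $(g_v)_v$ with $g_v = (\zeta_v, p_v) \in NP_{\medstar_v}$ satisfying $\outeredge^*(g) = \inneredge^*(g)$; under the identification $NP_{\exceptionaledge} \cong \fC$ of Example~\ref{example_trivial_graphical_modular}, and using the description of $\outeredge_e, \inneredge_e$ from Definition~\ref{def segal core and graph sub}, this condition reads $\zeta_{tx_e^1}((x_e^1)^\dagger) = \zeta_{tx_e^2}(x_e^2)$ for every internal edge $e$. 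The Segal map sends $(f_0, (f_1(v)))$ to the family whose $v$-th entry has coloring $\zeta_v$ obtained by restricting $f_0$ along $\iota_v$ and vertex element $f_1(v)$.

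The inverse is built as follows. Because colimits in $\finset^{\mathscr{I}}$ are computed objectwise, the arc set $A$ is the coequalizer of $\coprod_e A(\exceptionaledge) \rightrightarrows \coprod_v A(\medstar_v)$ appearing in \eqref{eq coequalizer}; consequently an involutive coloring $f_0$ of $A$ amounts precisely to a family of involutive colorings $\zeta_v$ of the stars agreeing under the gluing identifications $(x_e^1)^\dagger \sim x_e^2$ and $x_e^1 \sim (x_e^2)^\dagger$. Invoking the involutivity of each $\zeta_v$, these two identifications collapse to the single matching condition above, so the equalizer condition on $(g_v)$ is exactly what is needed to assemble the $\zeta_v$ into a unique involutive $f_0$. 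Setting $f_1(v) = p_v$ is then legitimate, since $\zeta_v$ and $f_0$ restrict to the same coloring on $i\nbhd(v)$ under the identification induced by $\iota_v$, whence $P(i\nbhd(v), \zeta_v|_{i\nbhd(v)}) = P(i\nbhd(v), f_0|_{i\nbhd(v)})$; this produces an element of $NP_G$, and the two assignments are visibly mutually inverse.

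The main obstacle is the involution bookkeeping: one must check carefully that the defining relations of $\outeredge^*$ and $\inneredge^*$ translate, under $NP_{\exceptionaledge} \cong \fC$, into exactly the color-matching condition at internal edges, and that this condition is equivalent to the objectwise coequalizer (gluing) condition on arcs. Once that equivalence is in hand, everything else follows formally from Lemma~\ref{lemma data of map of graphical csm} and the Yoneda lemma.
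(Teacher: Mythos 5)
Your proposal is correct and takes essentially the same route as the paper, whose proof is exactly the combination of Remark~\ref{remark nary operations pullback} with Lemma~\ref{lemma data of map of graphical csm}: your unwinding of $NP_G$ into pairs $(f_0,(f_1(v))_v)$ and of the equalizer into star-decorations whose colorings satisfy $\zeta_{tx_e^1}((x_e^1)^\dagger)=\zeta_{tx_e^2}(x_e^2)$, together with the observation that involutivity collapses the two arc-gluing relations to this single matching condition, is precisely that combination spelled out. The only cosmetic difference is your opening reformulation via $\segalcore[G]$ and the Segal map, which the paper instead carries out afterwards, in the proof of Lemma~\ref{Lemma_nerve_statisfies_Segal}.
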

\begin{proof}
Combine Remark~\ref{remark nary operations pullback} with  Lemma~\ref{lemma data of map of graphical csm}.
\end{proof}

\begin{lemma}\label{Lemma_nerve_statisfies_Segal}
The nerve of a modular operad satisfies the Segal condition.
\end{lemma}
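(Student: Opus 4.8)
I need to show that for any modular operad $P$, the nerve $NP$ satisfies the Segal condition: for every $G \in \graphicalcat$, the Segal map $\pregraphcat(\rgc[G], X) \to \pregraphcat(\segalcore[G], X)$ is a bijection when $X = NP$. By the Yoneda lemma this Segal map is identified with the canonical comparison $NP_G \to \pregraphcat(\segalcore[G], NP)$, so the plan is to compute the target and show it matches $NP_G$.

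\medskip

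\textbf{The plan.} First I would reduce to the two edge cases already handled by the remark preceding this lemma: when $G = \exceptionaledge$ the Segal core map is the identity by definition, and when $G = \medstar_n$ (a single vertex) the Segal core is all of $\rgc[G]$, so the Segal map is trivially a bijection. So I may assume $G$ has at least one vertex and is not itself a star. The key computation is to identify $\pregraphcat(\segalcore[G], NP)$ as a limit. Since $\segalcore[G]$ is defined in Definition~\ref{def segal core and graph sub} as the coequalizer
\[
\begin{tikzcd}
\coprod\limits_{e \in E_i} \rgc[\exceptionaledge] \rar[shift left, "\outeredge"] \rar[shift right, "\inneredge" swap] & \coprod\limits_{v\in V} \rgc[\medstar_v] \rar & \segalcore[G],
\end{tikzcd}
\]
applying the contravariant $\pregraphcat(-, NP)$ turns this coequalizer into an equalizer, and using Yoneda on each representable summand gives
\[
\begin{tikzcd}
\pregraphcat(\segalcore[G], NP) \rar & \prod\limits_{v\in V} NP_{\medstar_v} \rar[shift left, "\outeredge^*"] \rar[shift right, "\inneredge^*" swap] & \prod\limits_{e\in E_i} NP_{\exceptionaledge}.
\end{tikzcd}
\]

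\medskip

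\textbf{The main step.} Now I compare this with Lemma~\ref{nerve layers as equalizer}, which asserts precisely that $NP_G$ fits into an equalizer diagram with the very same parallel pair $\outeredge^*, \inneredge^*$ into $\prod_{v} NP_{\medstar_v} \rightrightarrows \prod_{e} NP_{\exceptionaledge}$. The crux is to check that the map $NP_G \to \prod_v NP_{\medstar_v}$ appearing in Lemma~\ref{nerve layers as equalizer} is exactly the map induced by the embeddings $\iota_v : \medstar_v \to G$, i.e.\ the Segal map precomposed with Yoneda. This is essentially a bookkeeping matter: both arrows into $\prod_v NP_{\medstar_v}$ are given by restricting a decoration of $G$ along $\iota_v$, so they agree. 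Granting this, both $NP_G$ and $\pregraphcat(\segalcore[G], NP)$ are equalizers of the identical diagram, hence canonically isomorphic, and the isomorphism is the Segal map. Therefore the Segal map is a bijection.

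\medskip

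\textbf{The main obstacle.} I expect the delicate part is not the categorical formalism but confirming the \emph{compatibility of the two equalizer presentations}—specifically that the maps $\outeredge_e, \inneredge_e$ used to build $\segalcore[G]$ in Definition~\ref{def segal core and graph sub} induce, under $\pregraphcat(-, NP)$ and Yoneda, the \emph{same} parallel pair as the $\outeredge^*, \inneredge^*$ defined via the embeddings in Lemma~\ref{nerve layers as equalizer}. Both are described in terms of sending $\edgemajor$ to $(x_e^1)^\dagger$ resp.\ $x_e^2$, so this is a matter of unwinding the conventions and matching the choice of ordering $e = [x_e^1, x_e^2]$ consistently; once the indexing is lined up, the two equalizer diagrams are literally the same and the result is immediate. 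Thus the work is almost entirely in making the bookkeeping precise, and the conclusion follows by the uniqueness of equalizers.
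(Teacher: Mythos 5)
Your proposal is correct and takes essentially the same approach as the paper: apply $\pregraphcat(-,NP)$ to the coequalizer presentation of $\segalcore[G]$ to get an equalizer, identify the resulting parallel pair with $\outeredge^*,\inneredge^*$ via Yoneda, and invoke Lemma~\ref{nerve layers as equalizer} to conclude that the equalizer is $NP_G$. The only differences are cosmetic: you spell out the edge cases ($G=\exceptionaledge$ or $\medstar_n$) and the bookkeeping check that the two parallel pairs coincide, both of which the paper leaves implicit.
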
 
\begin{proof} 
If $P$ is a modular operad, then 
\begin{equation}\label{coeq eq exchange}
\pregraphcat(\segalcore[G], NP)= \pregraphcat(\coequalizer(\outeredge,\inneredge),NP) \cong \equalizer\Big( \pregraphcat(\outeredge, NP), \pregraphcat(\inneredge, NP) \Big).\end{equation} 
Here $\pregraphcat(\outeredge, NP)$ is the top map of the following commutative diagram
\[ \begin{tikzcd}
\pregraphcat\Big(\coprod\limits_{v\in V} \rgc[\medstar_v], NP\Big) \rar{\outeredge} \dar{\cong} & 
\pregraphcat\Big(\coprod\limits_{e \in E_i} \rgc[\exceptionaledge], NP\Big) \dar{\cong} \\
\prod\limits_{v\in V} NP_{\medstar_v} \rar["\outeredge^*"] & \prod\limits_{e\in E_i} NP_{\exceptionaledge}
\end{tikzcd} \]
and likewise for $\pregraphcat(\inneredge, NP)$.
By Lemma~\ref{nerve layers as equalizer}, the equalizer in \eqref{coeq eq exchange} coincides with $NP_G$.
\end{proof} 

Let us now verify the first statement in Theorem~\ref{nerve_theorem}.

\begin{proposition}\label{nerve_fully_faithful}
The nerve functor $N : \csm \to \pregraphcat$ 
is fully faithful.
\end{proposition}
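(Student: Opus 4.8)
The plan is to show that the nerve functor $N\colon \csm \to \pregraphcat$ is fully faithful by establishing a bijection
\[
	\csm(P, Q) \cong \pregraphcat(NP, NQ)
\]
natural in both arguments. Since $N$ is a singular functor of the form $\hom(J, -)$, the standard strategy is to exploit the density of the representables and the universal property of colimits. Concretely, I would first reduce computing maps $NP \to NQ$ to computing, compatibly, maps on the values at each $G \in \graphicalcat$; a natural transformation $NP \to NQ$ is determined by its effect on elements $NP_G = \csm(\modopgenned{G}, P)$.

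The key reduction is to restrict attention to the \emph{representing} graphs $\medstar_S$ and the exceptional edge $\exceptionaledge$. By Example~\ref{example_trivial_graphical_modular} we have $\csm(\modopgenned{\exceptionaledge}, P) \cong \fC$, so evaluating a natural transformation $NP \to NQ$ at $\exceptionaledge$ produces the underlying map of color sets $f_0\colon \fC \to \mathfrak{D}$. Next, by Remark~\ref{remark nary operations pullback}, each value $P(S,\xi)$ sits in a pullback square built from $NP_{\medstar_S}$ and $(NP_{\exceptionaledge})^S$; a natural transformation $NP \to NQ$ therefore induces maps $P(S,\xi) \to Q(S, f_0\circ\xi)$ for every $(S,\xi) \in \bijcat_{\fC}$, assembling into a morphism of $\bijcat_{\fC}$-diagrams $P \to f_0^* Q$. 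This is exactly the data of a morphism $P \to Q$ in $\csm$ once one checks it respects the monad algebra structure, which is where the Segal data at stars of higher valence comes in.

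Thus the two main steps are: (i) produce, from a natural transformation $\alpha\colon NP \to NQ$, a candidate morphism of modular operads by reading off $f_0$ at $\exceptionaledge$ and the family $\{P(S,\xi)\to Q(S,f_0\xi)\}$ via the pullbacks of Remark~\ref{remark nary operations pullback}; and (ii) check this assignment is inverse to $N$. For injectivity (faithfulness), observe that a morphism of modular operads $P \to Q$ is determined by its underlying color map together with its effect on each operation set $P(S,\xi)$, and these are recovered from $N(P \to Q)$ by evaluating at $\exceptionaledge$ and $\medstar_S$; hence distinct morphisms give distinct nerve maps. For surjectivity (fullness), one must verify that the candidate maps $P(S,\xi) \to Q(S,f_0\xi)$ extracted in step (i) are actually compatible with composition and contraction, i.e.\ constitute an algebra map for the monad $\freegraphmonad$.

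The main obstacle is precisely this last compatibility check in step (i). Naturality of $\alpha$ only directly controls the behavior along morphisms of $\graphicalcat$, so I would leverage that every modular-operad structure map arises, up to the equalizer presentation, from graph substitutions built out of stars joined along exceptional edges. The technical heart is to show that the algebraic relations in $\csm$ (associativity and equivariance of the composition/contraction, encoded by $\mu$ for $\freegraphmonad$) are forced by naturality of $\alpha$ against the graphical maps $\outeredge_e, \inneredge_e$ and the active maps $\medstar_S \to G$ from Definition~\ref{def active and star active}. I expect this to follow by combining the equalizer description of $NP_G$ in Lemma~\ref{nerve layers as equalizer} with the pullback description of $P(S,\xi)$, so that compatibility with all structure maps is detected on graphs with at most two vertices — exactly the locality built into $\graphicalcat$. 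The routine but necessary verifications of naturality and of the two composite identities $(\,\widehat{\alpha}\,)\mapsto\alpha$ and $\alpha\mapsto\widehat{\alpha}$ I would carry out by unwinding Lemma~\ref{lemma data of map of graphical csm} and Remark~\ref{remark embeddings}, but these are formal once the structural compatibility is in hand.
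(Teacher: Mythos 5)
Your construction of the candidate inverse is exactly the paper's: faithfulness via evaluation at $\exceptionaledge$ together with the pullback squares of Remark~\ref{remark nary operations pullback}, and, for fullness, extraction of the color map $f_0$ from $NP_{\exceptionaledge}\to NQ_{\exceptionaledge}$ and of maps $P(S,\xi)\to Q(S,f_0\xi)$ from naturality at the stars $\medstar_S$. Where you diverge is the final verification that these extracted data form a map of $\freegraphmonad$-algebras. The paper does not use the equalizer of Lemma~\ref{nerve layers as equalizer} there, nor the maps $\outeredge_e,\inneredge_e$, nor any locality reduction: for each object $(h,G,\zeta)\in\conngraph_{\fC}(S,\xi)$, with $G$ of \emph{arbitrary} size, the square expressing compatibility of $P[G,\zeta]\to Q[G,f_0\zeta]$ with the structure maps down to $P(S,\xi)\to Q(S,f_0\xi)$ is literally a sub-diagram of the single naturality square of your $\alpha$ at the active map $\medstar_S\to G$ determined by $h$ (Definition~\ref{def active and star active}). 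This works because $P[G,\zeta]$ sits inside $NP_G$ via Lemma~\ref{lemma data of map of graphical csm}, and the algebra action on that piece is computed by precomposition with $J(\medstar_S\to G)$. So the check is one naturality square per colored graph, with no induction.

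The one point in your plan that would need repair is the expectation that ``compatibility with all structure maps is detected on graphs with at most two vertices.'' The monad $\freegraphmonad$ is unbiased --- the algebra condition quantifies over all colored graphs --- so verifying it only on two-vertex graphs and deducing the general case requires decomposing an arbitrary graph substitution into elementary ones and propagating via associativity of the $P$- and $Q$-actions; that is a biased-versus-unbiased comparison (cf.\ Remark~\ref{remark about generators and relations definition}) which the paper deliberately avoids and which your sketch does not supply. Fortunately it is also unnecessary: since you already list naturality against the active maps $\medstar_S\to G$ for all $G$ among your ingredients, the direct argument above closes the proof, and you should simply drop the two-vertex reduction.
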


\begin{proof} 
Throughout, let $P$ be in $\csm_{\fC}$ and $Q$ be in $\csm_{\mathfrak{D}}$.
First we will show that the nerve functor is faithful.
Suppose we are given $f,g: P\rightarrow Q$ in $\csm$ with the property that $Nf=Ng$.
In particular, $f$ and $g$ are equal as involutive functions from $NP_{\exceptionaledge}=\fC$ to $ NQ_{\exceptionaledge}=\mathfrak{D}$.
As we mentioned in Remark~\ref{remark nary operations pullback}, the set $P(S,\xi)$ is the pullback of
\[\begin{tikzcd} NP_{\medstar_S}\arrow[r] &  (NP_{\exceptionaledge})^S&  \ast \arrow[l,  "\xi" swap] \end{tikzcd}\] and similarly for $Q(S,f\circ \xi) = Q(S,g\circ \xi)$.
As we have a commutative diagram
\[\begin{tikzcd} NP_{\medstar_S}\arrow[r]\arrow[d, "Nf_{\medstar_S}=Ng_{\medstar_S}" description] &  
(NP_{\exceptionaledge})^S \arrow[d, "f_0^S= g_0^S"]&  \ast \arrow[l,"
\xi" swap]\arrow[d,"="] \\
NQ_{\medstar_S}\arrow[r] &  
(NQ_{\exceptionaledge})^S &  \ast \arrow[l,"
f_0\xi" swap],
 \end{tikzcd}\] it follows that $f=g$ on each set $P(S,\xi)$.
Thus $f$ and $g$ are identical.

To show that the nerve functor is full, now suppose we have a map $\tilde{f}: NP\rightarrow NQ$ in $\pregraphcat$.
We wish to exhibit a modular operad map $f:P\rightarrow Q$ so that $\tilde{f} = Nf$.
By definition, the map $\tilde{f}:NP_{\exceptionaledge}\rightarrow NQ_{\exceptionaledge}$ is a map of involutive sets $f_0:\fC\rightarrow\mathfrak{D}$.

Similar to previous argument, we know that for each $m$ we have a map of diagrams 
\[\begin{tikzcd} NP_{\medstar_S}\arrow[r]\arrow[d, "\tilde{f}"] &  
(NP_{\exceptionaledge})^S \arrow[d, "f_0^S"]&  \ast \arrow[l,"\xi" swap]\arrow[d,"="] \\
NQ_{\medstar_S}\arrow[r] &  
(NQ_{\exceptionaledge})^S &  \ast \arrow[l, "f_0\xi" swap],
 \end{tikzcd}\]
which induces a map of pullbacks $f:P(S,\xi)\rightarrow Q(S,f_0\xi)$.

We've now defined a map $f$ in $\groundcat^{\bijcat_{\fC}}$ from $P$ to $f_0^* Q$.
It remains to show that $f$ is modular operad map, at which point it is automatic that $Nf=\tilde{f}$.
This amounts to showing that the diagram 
\[\begin{tikzcd}
\freegraphmonad_{\fC}P(S,\xi)\arrow[r, "\freegraphmonad_{\fC}f"]\arrow[d] & \left(f_0^*\freegraphmonad_{\mathfrak{D}}Q\right)(S,\xi) \rar{=} \dar & \freegraphmonad_{\mathfrak{D}}Q(S,f_0\xi)\dar \\
P(S,\xi)\arrow[r,"f"]& (f_0^*Q)(S,\xi) \rar{=} &Q(S,f_0\xi).
\end{tikzcd} \] commutes, where the vertical maps $\freegraphmonad_{\fC}P \to P$ and $\freegraphmonad_{\mathfrak{D}}Q \to Q$ are the algebra structure maps for $P$ and $Q$.

Consider an object $(h,G,\zeta) \in \conngraph_{\fC}(S,\xi)$; that is, $(G,\zeta)$ is a $\fC$-colored graph and $h : S \to \eth(G)$ is a bijection so that $\zeta|_{\eth(G)} \circ h = \xi$.
It suffices, by Definition~\ref{defn obj of decorations}, to show that for any such object $(h,G,\zeta)$ that the diagram
\[ 
\begin{tikzcd}
P[G,\zeta]\arrow[r]\arrow[d] & Q[G, f_0\zeta]\arrow[d]\\
P(S,\xi)\arrow[r,"f"]& Q(S,f_0\xi).
\end{tikzcd} \] commutes.
This is a automatic, as this is a sub-diagram of
\[\begin{tikzcd}
NP_{G}\arrow[r]\arrow[d] & NQ_{G}\arrow[d]\\
NP_{\medstar_S}\arrow[r,"f"]& NQ_{\medstar_S}
\end{tikzcd}\]
where $\medstar_S\rightarrow G$ is the active map determined by $h : S \to \eth(G)$ (Definition~\ref{def active and star active}).
\end{proof} 

\subsection{The modular operad associated to a Segal presheaf}
\label{section mod op segal presheaf}
As we saw in Lemma~\ref{Lemma_nerve_statisfies_Segal}, the nerve functor factors through the full subcategory of Segal presheaves.
We now turn to the last remaining part of Theorem~\ref{nerve_theorem}, namely that every Segal presheaf is (up to isomorphism) the nerve of a modular operad.
This requires a construction $X \rightsquigarrow \csmx_X$ taking a Segal presheaf to a modular operad.

It is technically convenient to work with the extended graphical category $\egc$ in this section.
In a moment, we will fix a Segal $\egc$-presheaf $X$ and endeavor to define $\csmx = \csmx_X$, which we call the \emph{modular operad associated to $X$} (Definition~\ref{def modular operad associated to X}).
As the underlying object of $\csmx$ is defined via certain pullbacks (Definition~\ref{def csmx underlying object}), our construction will only produce an isomorphism class, and is invariant under isomorphism of $\egc$-presheaves.
Thus the following remark is harmless.

\begin{remark}
\label{remark graphcat vs egc segal and associated}
If $Y$ is a Segal $\graphicalcat$-presheaf, then its right Kan extension $\iota_*Y$ along the inclusion $\iota : \graphicalcat \to \egc$ is a Segal $\egc$-presheaf \cite[\ONEsegaltosegal]{modular_paper_one}.
By definition, the \emph{modular operad associated to $Y$} is just the modular operad associated to the Segal $\egc$-presheaf $\iota_*Y$ (Definition~\ref{def modular operad associated to X}).
On the other hand, if $X$ is a Segal $\egc$-presheaf, then its restriction $\iota^*X$ is a Segal $\graphicalcat$-presheaf and $X\cong \iota_*\iota^*X$.
Thus the modular operad associated to $X$ is the the same as the modular operad associated to the restriction $\iota^*X$.
\end{remark}

\begin{notation}
\label{notation segal core subscript}
If $G$ is a graph in $\egc$, then we will write $\segalcore[G] \subseteq \regc[G]$ for the relevant subobject of the representable object. 
When $G$ a safe graph, this is just the left Kan extension of the usual inclusion $\segalcore[G] \to \rgc[G]$ (Definition~\ref{def segal core and graph sub}), while if $G$ is a nodeless loop then it is of the form $\regc[\exceptionaledge] \to \regc[G]$.
If $X$ is a $\egc$-presheaf, we write
\[
	X_{\segalcore[G]} = \hom(\segalcore[G], X).
\]
\end{notation}

Fix an arbitrary $\egc$-presheaf $X$ satisfying the Segal condition, and let $\fC$ be the involutive set $X_{\exceptionaledge}$.
We start by constructing the underlying object $\csmx$ in $\Set^{\bijcat_{\fC}}$.

\begin{definition}
\label{def csmx underlying object}
For each function $\xi$ from a set $S$ to $X_{\exceptionaledge} = \fC$ we define a set $\csmx(S,\xi)$ as the pullback of 
\[\begin{tikzcd} X_{\medstar_S}\arrow[r] &  (X_{\exceptionaledge})^S&  \ast \arrow[l, "\xi" swap].
\end{tikzcd}\] 
This defines an object $\csmx$ in $\Set^{\bijcat_{\fC}}$. 
\end{definition}

In particular, $\csmx(\varnothing, \incl)$ is isomorphic to $X_{\medstar_0}$.

In order to now exhibit the object $\csmx$ as an $\freegraphmonad_{\fC}$-algebra, we need to produce a map $\gamma: \freegraphmonad\csmx\to \csmx$.
We again follow the notation that was introduced just before Definition~\ref{defn obj of decorations} and abbreviate, for a $\fC$-colored graph $(G,\zeta)$
\begin{equation}\label{eq X zetavnbhd}
	X(\zetavnbhd) = X(\dnbhd{v}, \zeta|_{\dnbhd{v}}) \qquad \& \qquad \conngraph_{\fC}(\zetavnbhd) = \conngraph_{\fC}(\dnbhd{v}, \zeta|_{\dnbhd{v}}).
\end{equation}
Let $(f,G,\zeta)$ be an object of $\conngraph_{\fC}(S,\xi)$ (that is, $(G,\zeta)$ is a $\fC$-colored graph and $f: S \to \eth(G)$ is a bijection with $\zeta|_{\eth(G)} \circ f = \xi$).
Since $X$ satisfies the Segal condition and $\csmx(\zetavnbhd)$ is a subset of $X_{\medstar_v}$, we have an inclusion
\[ \csmx[G,\zeta] = \prod_{v\in V} \csmx(\zetavnbhd) \hookrightarrow X_G. \]
Note that when $G$ has an empty vertex set, then $\csmx[G,\zeta]$ is a one-point set and this inclusion is essentially equivalent to the coloring $\zeta$.

\begin{definition}[Action on $\csmx$]
\label{def of gamma restriction}
We define the algebra structure map $\gamma: \freegraphmonad\csmx \to \csmx$.
\begin{itemize}
\item Suppose that $(f,G,\zeta)$ is an object of $\conngraph_{\fC}(S,\xi)$. 
We have the following commutative diagram 
\[\begin{tikzcd}
\csmx[G, \zeta] \arrow[drr, bend left] \arrow[dr, dotted, "\exists!"]\arrow[dd, bend right]&& \\ 
& \csmx(S,\xi) \arrow[r]\arrow[d]	& \ast \arrow[d, "\xi"] \\
X_G \rar["d"] & X_{\medstar_S} \arrow[r] & (X_{\exceptionaledge})^S
\end{tikzcd}\] 
where the bottom square is the pullback used to define $\csmx(S,\xi)$ and the map $d:X_G \rightarrow X_{\medstar_S}$ is induced by the active map $\medstar_S \to G$ coming from the bijection $f: S \to \eth(G)$.
We write $\gamma_{f,G,\zeta}$ for the induced map $\csmx[G, \zeta] \to \csmx(S,\xi)$.
\item
Since $\freegraphmonad\csmx (S,\xi)=\colim_{\conngraph_{\fC}(S,\xi)}\csmx[G,\zeta]$ we have defined a map $\freegraphmonad\csmx(S,\xi)\rightarrow \csmx(S,\xi)$ on each component of the colimit and this can be extended to the whole colimit.
Since $(S,\xi)$ was arbitrary, we have a map 
\[
\gamma :\freegraphmonad\csmx\rightarrow \csmx
\]
in $\bijcat_{\fC}$.
\end{itemize}
\end{definition}

In other words, the structure map $\gamma$ is ultimately induced by the composites (see Notation~\ref{notation segal core subscript})
\[ \begin{tikzcd}
\csmx[G,\zeta] \subseteq X_{\segalcore[G]} & X_G \rar \lar["\cong" swap] & X_{\medstar_G}
\end{tikzcd} \]
where $\medstar_G = \medstar_{\eth(G)} \to G$ is the active map induced by the identity on $\eth(G)$.

It remains to show that the map $\gamma$ from Definition~\ref{def of gamma restriction} turns $\csmx$ into a $\freegraphmonad$-algebra.
Let us first address the unit axiom.
\begin{lemma}
\label{lemma gamma unit}
The diagram 
\[ \begin{tikzcd}
\csmx \rar{\eta_{\csmx}} \arrow[dr, "=" swap] & \freegraphmonad \csmx \dar{\gamma} \\
& \csmx 
\end{tikzcd} \]
commutes.
\end{lemma}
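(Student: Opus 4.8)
The plan is to verify the identity $\gamma \circ \eta_{\csmx} = \id$ objectwise in $\bijcat_{\fC}$, exploiting that the unit is a coprojection into a colimit. Fix $(S,\xi)$. By the definition of $\eta$ (following Definition~\ref{def star coloring}), the map $\eta_{\csmx,(S,\xi)}$ is exactly the coprojection into the colimit $\freegraphmonad\csmx(S,\xi) = \colim_{\conngraph_{\fC}(S,\xi)} \csmx[G,\zeta]$ of the component indexed by the object $(\id_S, \medstar_S, \xi^\medstar)$. Consequently, the composite $\gamma \circ \eta_{\csmx}$ at $(S,\xi)$ is precisely the single structure map $\gamma_{\id_S, \medstar_S, \xi^\medstar} \colon \csmx[\medstar_S, \xi^\medstar] \to \csmx(S,\xi)$ from Definition~\ref{def of gamma restriction}. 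Since $\csmx[\medstar_S, \xi^\medstar] = \csmx(S,\xi)$ (the star $\medstar_S$ has a single vertex $v$ with $\dnbhd{v} = S$ and $\xi^\medstar$ restricting to $\xi$ there), the problem reduces to showing that this particular structure map is the identity of $\csmx(S,\xi)$.

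To do this I would unwind the defining pullback diagram of $\gamma_{\id_S, \medstar_S, \xi^\medstar}$. The map $d \colon X_{\medstar_S} \to X_{\medstar_S}$ appearing there is induced by the active map $\medstar_S \to \medstar_S$ associated, via Definition~\ref{def active and star active}, to the bijection $f = \id_S \colon S \to \eth(\medstar_S) = S$. But this is exactly the canonical active map $\medstar_{\medstar_S} = \medstar_{\eth(\medstar_S)} \to \medstar_S$, which is the identity; hence $d = \id_{X_{\medstar_S}}$. Therefore the composite $\csmx[\medstar_S, \xi^\medstar] \hookrightarrow X_{\medstar_S} \xrightarrow{d} X_{\medstar_S}$ is simply the inclusion $\csmx(S,\xi) \hookrightarrow X_{\medstar_S}$, which under our identification is precisely the pullback projection used to define $\csmx(S,\xi)$ in Definition~\ref{def csmx underlying object}.

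I would then close the argument with the uniqueness clause of the universal property. By the previous paragraph, the cone over $X_{\medstar_S} \to (X_{\exceptionaledge})^S \leftarrow \ast$ that determines $\gamma_{\id_S, \medstar_S, \xi^\medstar}$ is the very cone defining $\csmx(S,\xi)$; since the identity map $\csmx(S,\xi) \to \csmx(S,\xi)$ already induces that cone, uniqueness forces $\gamma_{\id_S, \medstar_S, \xi^\medstar} = \id_{\csmx(S,\xi)}$. As equality of maps in $\Set^{\bijcat_{\fC}}$ is checked objectwise, this yields $\gamma \circ \eta_{\csmx} = \id$.

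I do not expect a genuine obstacle here: the content is entirely the bookkeeping of identifications. The only two points demanding care are verifying that the active map induced by $\id_S$ is literally the identity of $\medstar_S$ (so that $d$ is the identity), and matching the inclusion $\csmx[\medstar_S, \xi^\medstar] \hookrightarrow X_{\medstar_S}$ with the defining pullback projection of $\csmx(S,\xi)$. Once those identifications are pinned down, the uniqueness argument finishes the proof immediately.
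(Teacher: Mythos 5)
Your proposal is correct and follows essentially the same route as the paper's proof: reduce to the single structure map $\gamma_{\id_S,\medstar_S,\xi^\medstar}$ via the identification $\csmx(S,\xi)=\csmx[\medstar_S,\xi^\medstar]$, observe that the map $d$ in Definition~\ref{def of gamma restriction} is the identity in this case, and conclude by the uniqueness clause of the pullback's universal property. The paper states this more tersely, but the content and the key uniqueness argument are identical.
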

\begin{proof}
The composite is 
\begin{equation}
\label{eq some composite}
\begin{tikzcd}[column sep=huge]
\csmx(S,\xi) \rar["=", "\eta_{(S,\xi)}" swap] & \csmx[\medstar_S, \xi^\medstar] \rar["\gamma_{\id_S, \medstar_S, \xi^\medstar}" swap] & \csmx(S,\xi)
\end{tikzcd}
\end{equation}
Of course when $G=\medstar_S$ and $d$ is the identity in Definition~\ref{def of gamma restriction}, then the identity map makes the diagram commute (hence is the unique map making the diagram commute).
It follows that the composite \eqref{eq some composite} is the identity on $\csmx(S,\xi)$.
\end{proof}

It remains to show that the diagram
\begin{equation}\label{heart}
\begin{tikzcd}
\freegraphmonad\freegraphmonad\csmx \arrow[r, "\freegraphmonad\gamma"] \arrow[d, "\mu"] & \freegraphmonad\csmx \arrow[d, "\gamma"]\\
\freegraphmonad\csmx \arrow[r, "\gamma"] &  \csmx 
\end{tikzcd}
\end{equation} 
commutes, where $\gamma : \freegraphmonad\csmx \to \csmx$ is the proposed $\freegraphmonad$-algebra structure map from Definition~\ref{def of gamma restriction}.
It is enough to show, for each $\fC$-colored graph $(G,\zeta)$, that the diagram commutes when restricted to $(\freegraphmonad\csmx)[G,\zeta] \to (\freegraphmonad\freegraphmonad\csmx)(\eth(G), \incl)$ coming from considering $(\id_{\eth(G)}, G,\zeta)$ as an object of $\conngraph_{\fC}(\eth(G),\incl)$.
\begin{lemma}
\label{lemma no vertex algebra lemma}
If $G$ has no vertices (that is, if $G$ is a nodeless loop or the exceptional edge), then the diagram 
\[ \begin{tikzcd}[column sep=small, row sep=small]
(\freegraphmonad\csmx)[G,\zeta] \arrow[rr] \arrow[dd] \arrow[rrrddd] & & \csmx[G,\zeta]\arrow[dr]  \\
& & & (\freegraphmonad\csmx)(\eth(G), \incl) \\
\csmx[G,\zeta] \arrow[dr] \\
& (\freegraphmonad\csmx)(\eth(G), \incl) & & (\freegraphmonad\freegraphmonad\csmx)(\eth(G), \incl) \arrow[uu, "\freegraphmonad \gamma"] \arrow[ll, "\mu"]
\end{tikzcd} \]
commutes.
Here, the diagonal maps are the structural maps for the colimits and the top and left maps are just the unique map on the point.
This implies that \eqref{heart} commutes when restricted to $(\freegraphmonad\csmx)[G,\zeta]$.
\end{lemma}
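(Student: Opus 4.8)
The plan is to exploit the fact that, because $G$ has no vertices, the empty products $\csmx[G,\zeta] = \prod_{v\in V(G)} \csmx(\zetavnbhd)$ and $(\freegraphmonad\csmx)[G,\zeta] = \prod_{v\in V(G)} (\freegraphmonad\csmx)(\zetavnbhd)$ are both terminal objects, i.e.\ one-point sets. Thus all three objects in the upper-left of the displayed diagram are points, and the top arrow and the left arrow, each a map of one-point sets, necessarily coincide with the unique such map. This single observation collapses most of the work: once the two triangles adjacent to $(\freegraphmonad\freegraphmonad\csmx)(\eth(G),\incl)$ are shown to commute, the equality of the top and left arrows will force $\freegraphmonad\gamma$ and $\mu$ to agree after precomposition with the long diagonal.

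First I would verify the triangle involving $\freegraphmonad\gamma$. Unwinding $\freegraphmonad\gamma$ at $(\eth(G),\incl)$, it is the induced map of colimits whose restriction to the $(\id_{\eth(G)}, G,\zeta)$-component is the product over $v\in V(G)$ of the relevant components of $\gamma$, followed by the structural map into $(\freegraphmonad\csmx)(\eth(G),\incl)$. Since $V(G)=\varnothing$, this empty product is the identity of the point, which is exactly the top arrow; hence $\freegraphmonad\gamma$ precomposed with the long diagonal equals the top arrow followed by the structural map. This is the upper triangle.

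Next I would treat the triangle involving $\mu$, and here the only non-formal input is Remark~\ref{remark mu when G lacks vertices}. That remark records that when $G$ has no vertices, the multiplication $\mu$ restricted to the $(\id_{\eth(G)}, G,\zeta)$-component degenerates: the composite \eqref{eq mu action} factors through the structural map $\csmx[G,\zeta] \to (\freegraphmonad\csmx)(\eth(G),\incl)$ at the object $(\id_{\eth(G)}, G,\zeta)$. Thus $\mu$ precomposed with the long diagonal equals the left arrow --- the unique map on the point --- followed by that same structural map, which is the lower triangle.

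Finally, combining the two triangles with the equality of the top and left arrows shows that $\freegraphmonad\gamma$ and $\mu$ agree after precomposition with the long diagonal, as maps into $(\freegraphmonad\csmx)(\eth(G),\incl)$; post-composing with $\gamma$ then recovers exactly the restriction of \eqref{heart} to $(\freegraphmonad\csmx)[G,\zeta]$. I expect no genuine obstacle here, as the content lies entirely in the point-set degeneration of the decorations and in reading off $\mu$ from Remark~\ref{remark mu when G lacks vertices}. The only place demanding care is the bookkeeping of which structural map of which colimit each arrow represents, so that the invocation of that remark lands on the correct component.
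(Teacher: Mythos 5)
Your proposal is correct and follows essentially the same route as the paper's proof: the paper likewise combines the componentwise description of $\freegraphmonad\gamma$, Remark~\ref{remark mu when G lacks vertices} for $\mu$, and the fact that $\csmx[G,\zeta]$ is a point (so factorizations through its structural map are unique) to see both composites agree. The only cosmetic difference is that you verify the two triangles separately and then compare, whereas the paper phrases it as both composites factoring through the same structural map at $(\id,G,\zeta)$ with a unique such factorization.
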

\begin{proof}
The long diagonal followed by either $\freegraphmonad \gamma$ or $\mu$ factors through the structural map \[
\csmx [G,\zeta] \to \colim_{(f,K,\zeta') \in \conngraph_{\fC}(\eth(G),\incl)} \csmx[K,\zeta'] = (\freegraphmonad\csmx)(\eth(G), \incl)
\]
at $(\id, G, \zeta) \in \conngraph_{\fC}(\eth(G),\incl)$.
For $\freegraphmonad \gamma$ this follows because $\freegraphmonad \gamma$ is is given componentwise on 
\[
	\colim_{(f,K,\zeta') \in \conngraph_{\fC}(\eth(G),\incl)} \freegraphmonad\csmx[K,\zeta'] = (\freegraphmonad\freegraphmonad\csmx)(\eth(G), \incl),
\]
while for $\mu$ it follows from Remark~\ref{remark mu when G lacks vertices}.
But $\csmx[G,\zeta]$ is a point, so there is exactly one way for a function for factor through this structural map.
\end{proof}

\begin{proposition}
\label{prop modular operad associated to X}
The pair $(\csmx, \gamma)$ (from Definition~\ref{def csmx underlying object} and Definition~\ref{def of gamma restriction}) is an algebra over $\freegraphmonad_{\fC}$.
\end{proposition}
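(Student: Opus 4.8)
The unit axiom is already settled by Lemma~\ref{lemma gamma unit}, so the plan is to verify the associativity axiom, that is, the commutativity of the square \eqref{heart}. As explained in the paragraph preceding Lemma~\ref{lemma no vertex algebra lemma}, it suffices to check this after restricting along the structural map $(\freegraphmonad\csmx)[G,\zeta]\to(\freegraphmonad\freegraphmonad\csmx)(\eth(G),\incl)$ attached to $(\id_{\eth(G)},G,\zeta)$, for each $\fC$-colored graph $(G,\zeta)$. Lemma~\ref{lemma no vertex algebra lemma} disposes of the case where $G$ has no vertices, so I would assume $G$ has at least one vertex. Since $(\freegraphmonad\csmx)[G,\zeta]=\prod_{v\in V}\freegraphmonad\csmx(\zetavnbhd)$ is a product of colimits, I would reduce further to a single component: fix for each $v$ an object $(m_v,H_v,\zeta_v)\in\conngraph_{\fC}(\zetavnbhd)$ together with an element $\beta_v\in\csmx[H_v,\zeta_v]$, and follow $\prod_v\beta_v$ through the two composites $\gamma\circ\freegraphmonad\gamma$ and $\gamma\circ\mu$.

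Next I would make both composites explicit using the description of $\gamma$ as being induced by the restrictions $\csmx[K,\zeta']\subseteq X_{\segalcore[K]}\xleftarrow{\cong}X_K\to X_{\medstar_{\eth(K)}}$ along the active maps $\medstar_{\eth(K)}\to K$. Writing $a\colon\medstar_{\eth(G)}\to G$, $c_v\colon\medstar_v\to H_v$, and $b\colon\medstar_{\eth(G)}\to G\{H_v\}$ for the relevant active maps (using $\eth(G)\cong\eth(G\{H_v\})$), the composites unwind as follows. The map $\freegraphmonad\gamma$ applies $\gamma$ at each $(m_v,H_v,\zeta_v)$, producing $a_v\in\csmx(\zetavnbhd)\subseteq X_{\medstar_v}$ as the image of $\beta_v$ under $c_v^*$; then $\gamma$ at $(G,\zeta)$ sends the assembled $\prod_v a_v\in\csmx[G,\zeta]\subseteq X_G$ to $a^*\bigl(\prod_v a_v\bigr)\in X_{\medstar_{\eth(G)}}$. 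On the other side, $\mu$ forms the graph substitution $G\{H_v\}$ and delivers $\prod_v\beta_v$ reindexed as a decoration of $G\{H_v\}$, after which $\gamma$ at $(G\{H_v\},\tilde\zeta)$ applies $b^*$ to its image in $X_{G\{H_v\}}$. Both composites thus land in $X_{\medstar_{\eth(G)}}$, and since $\csmx(\eth(G),\incl)$ is a subset of $X_{\medstar_{\eth(G)}}$ it suffices to compare them there.

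The crux is then to compare $a^*\bigl(\prod_v a_v\bigr)$ with $b^*$ of the image of $\prod_v\beta_v$. For this I would factor the active map $b$ as
\[
\medstar_{\eth(G)}\xrightarrow{\;a\;}G=G\{\medstar_v\}\xrightarrow{\;G\{c_v\}\;}G\{H_v\},
\]
where $G\{c_v\}$ is the active map blowing up each vertex $v$ of $G$ into $H_v$ via $c_v$; this equality is precisely the associativity (and star-unitality) of graph substitution underlying Proposition~\ref{proposition free graph monad}, since the single vertex of $\medstar_{\eth(G)}$ may be blown up into $G\{H_v\}$ in two stages. Applying the presheaf $X$ gives $b^*=a^*\circ(G\{c_v\})^*$, and under the Segal-core presentations $X_G\cong\lim_v X_{\medstar_v}$ and $X_{G\{H_v\}}\cong\lim_v X_{H_v}$ (the latter obtained by first gluing the $\medstar_w$ along the internal edges of each $H_v$, then along those of $G$) the map $(G\{c_v\})^*$ is identified with $\lim_v c_v^*$ by naturality of the Segal cores. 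Since $c_v^*$ carries the $v$-component $\beta_v$ to $a_v$, this yields $(G\{c_v\})^*\bigl(\prod_v\beta_v\bigr)=\prod_v a_v$, whence both composites equal $a^*\bigl(\prod_v a_v\bigr)$, as needed.

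I expect the main obstacle to be this last step: justifying the factorization $b=G\{c_v\}\circ a$ at the level of active maps and, above all, verifying that $(G\{c_v\})^*$ really is $\lim_v c_v^*$ under the two Segal-core presentations. This is exactly where the Segal hypothesis on $X$ enters essentially, and it rests on knowing that the isomorphism $X_K\cong X_{\segalcore[K]}$ is natural in $K$ and compatible with the way the Segal core of a substituted graph $G\{H_v\}$ decomposes in terms of the Segal cores of the $H_v$ and the internal edges of $G$ — structural facts about $\graphicalcat$, $\egc$, and graph substitution that I would extract from the companion paper \cite{modular_paper_one}.
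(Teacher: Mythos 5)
Your proposal is correct and takes essentially the same approach as the paper: after the identical reductions (unit axiom via Lemma~\ref{lemma gamma unit}, the vertex-free case via Lemma~\ref{lemma no vertex algebra lemma}, and restriction to a fixed component $\prod_v \csmx[H_v,\zeta_v]$ of the colimit), your key step --- factoring the active map $\medstar_{\eth(G)} \to G\{H_v\}$ through the blow-up map $G \to G\{H_v\}$ and using the Segal condition to identify that blow-up's action with $\prod_v c_v^*$ --- is exactly the content of the paper's diagram~\eqref{reduced diagram} of $\egc$-presheaves, with your factorization corresponding to its triangle and your identification to its square. The only difference is presentational: you chase elements, while the paper establishes commutativity of a diagram of presheaves and then applies $\hom(-,X)$.
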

\begin{definition}
\label{def modular operad associated to X}
The pair $(\csmx, \gamma)$ is called the \emph{modular operad associated to $X$}.
\end{definition}
\begin{proof}[Proof of Proposition~\ref{prop modular operad associated to X}]
By Lemma~\ref{lemma gamma unit} we know that this pair satisfies the unit axiom.
Thus we must show that \eqref{heart} commutes, and it is enough to show that it commutes when restricted along the structural maps $(\freegraphmonad\csmx)[G,\zeta] \to (\freegraphmonad\freegraphmonad\csmx)(\eth(G), \incl)$.
Lemma~\ref{lemma no vertex algebra lemma} covers the case when $G$ has no vertices, so from now on we assume that $G$ has at least one vertex.
In particular, $G$ is an object of $\graphicalcat$.

The object $(\freegraphmonad\csmx)[G,\zeta]$ is (using the shorthand from \eqref{eq X zetavnbhd} appearing before Definition~\ref{def of gamma restriction})
\[
	\prod_{v\in G} \freegraphmonad\csmx(\zetavnbhd) = \prod_{v\in G} \colim_{\substack{(m_v, H_v, \zeta_v) \in \\ \conngraph_{\fC}(\zetavnbhd)}} \csmx[H_v, \zeta_v]; 
\]
we fix a collection $(m_v, H_v, \zeta_v) \in \conngraph_{\fC}(\zetavnbhd)$
and show that \eqref{heart} commutes when restricted to the natural map
\[
	\prod_{v\in G} \csmx[H_v, \zeta_v] \to (\freegraphmonad\csmx)[G,\zeta] \to (\freegraphmonad\freegraphmonad\csmx)(\eth(G), \incl).
\]

Composing with the arrow on the left of \eqref{heart} factors as
\begin{equation} \label{eq establishing dashed map} \begin{tikzcd}
\prod\limits_{v\in G} \csmx[H_v, \zeta_v] \rar \dar[dashed] & (\freegraphmonad\freegraphmonad\csmx)(\eth(G), \incl) \dar{\mu}  \\
 \csmx[G\{H_v\}, \tilde \zeta] \rar & (\freegraphmonad\csmx)(\eth(G), \incl) 
\end{tikzcd} \end{equation}
where $\tilde\zeta$ is from Construction~\ref{construction assembly}.
The dashed map comes as follows: each $\csmx[H_v, \zeta_v]$ is a subset of \[ X_{\segalcore[H_v]} = \hom(\segalcore[H_v], X) \xleftarrow{\cong} X_{H_v} \] 
while $\csmx[G\{H_v\}, \tilde \zeta]$ is a subset of 
\[
	X_{\segalcore[G\{H_v\}]} \xleftarrow{\cong} X_{G\{H_v\}}.
\]
We have a coequalizer diagram
\[ \begin{tikzcd}
\coprod\limits_{e \in E_i} \regc[\exceptionaledge] \rar[shift left, "\outeredge"] \rar[shift right, "\inneredge" swap] & \coprod\limits_{v\in V} \segalcore[H_v] \rar & \segalcore[G\{H_v\}],
\end{tikzcd} \]
and by applying $\hom(-,X)$, we have a monomorphism $X_{\segalcore[G\{H_v\}]} \hookrightarrow \prod_{v\in G} X_{\segalcore[H_v]}$.
Compatibility at the boundaries of the $H_v$ implies that the images of the monomorphisms
\[
	\csmx[G\{H_v\}, \tilde \zeta] \hookrightarrow X_{\segalcore[G\{H_v\}]} \hookrightarrow \prod_{v\in G} X_{\segalcore[H_v]}
\]
and $\prod \csmx[H_v, \zeta_v] \hookrightarrow \prod X_{\segalcore[H_v]}$ coincide.
This provides the dashed map in \eqref{eq establishing dashed map}.

The left bottom composite of \eqref{heart} is induced from the zigzag
\[
	\coprod_{v\in G} \segalcore[H_v] \to \segalcore[G\{H_v\}] \to \regc[G\{H_v\}] \leftarrow \regc[\medstar_G]
\]
(where $\medstar_G \to G\{H_v\}$ is the active map arising from $\eth(G) \cong \eth(G\{H_v\})$), that is, this zigzag induces
\[ \begin{tikzcd}
\prod\limits_{v\in G} \csmx[H_v, \zeta_v] \dar[hook] \rar[dashed, hook] & 
\csmx[G\{H_v\}, \tilde \zeta] \dar[hook] \arrow[rr,dashed] & & \csmx(\eth(G),\incl) \dar[hook] \\
\prod\limits_{v\in G} X_{\segalcore[H_v]} &  X_{\segalcore[G\{H_v\}]} \lar[hook'] & X_{G\{H_v\}} \lar["\cong"] \rar & X_{\medstar_G}
\end{tikzcd} \]

Let us turn to the top right of \eqref{heart}.
The top arrow arises from the diagram
\[ \begin{tikzcd}
\prod\limits_{v\in G} \csmx[H_v, \zeta_v] \dar[hook] \arrow[rr]  &  & \prod\limits_{v\in G} \csmx(\zetavnbhd) \dar[hook] \\
\prod\limits_{v\in G} X_{\segalcore[H_v]} &  \prod\limits_{v\in G} X_{H_v} \lar["\cong"] \rar & \prod\limits_{v\in G} X_{\medstar_v} 
\end{tikzcd} \]
where $\medstar_v \to H_v$ is the active map determined by $\eth(\medstar_v) = \nbhd(v)^\dagger \cong \dnbhd{v} \xrightarrow{m_v} \eth(H_v)$ for each $v$.
The vertical inclusion on the right factors through $X_{\segalcore[G]}$.
The arrow on the right of \eqref{heart} then comes from the diagram
\[ \begin{tikzcd}
\prod\limits_{v\in G} \csmx(\zetavnbhd)  \dar[hook] \arrow[rr, dashed] & & \csmx(\eth(G),\incl) \dar[hook] \\
X_{\segalcore[G]} & X_G \lar["\cong" swap] \rar & X_{\medstar_G}
\end{tikzcd} \]

We thus deduce commutativity of \eqref{heart} from commutativity of the following diagram of $\egc$-presheaves
\begin{equation} 
\label{reduced diagram}
\begin{tikzcd}
\segalcore[G] \rar \dar & \regc[G] \dar & \regc[\medstar_G] \lar \arrow[dl] \\
Z \rar & \regc[G\{H_v\}]  \\
\segalcore[G\{H_v\}] \arrow[ur] \uar
\end{tikzcd} \end{equation}
where the zig-zag on the left comes from the following pair of maps of coequalizers
\[ \begin{tikzcd}
\coprod\limits_{e \in E_i} \regc[\exceptionaledge] \rar[shift left, "\outeredge"] \rar[shift right, "\inneredge" swap] \dar["=" swap] & 
\coprod\limits_{v\in V} \regc[\medstar_v] \rar \dar & 
\segalcore[G]\dar 
\\
\coprod\limits_{e \in E_i} \regc[\exceptionaledge] \rar[shift left, "\tilde \outeredge"] \rar[shift right, "\tilde \inneredge" swap] & 
\coprod\limits_{v\in V} \regc[H_v] \rar & 
Z
\\
\coprod\limits_{e \in E_i} \regc[\exceptionaledge] \rar[shift left] \rar[shift right] \uar["="] & 
\coprod\limits_{v\in V} \segalcore[H_v] \rar  \uar& 
\segalcore[G\{H_v\}] \uar
\end{tikzcd} \]
But commutativity of \eqref{reduced diagram} is relatively straightforward. For instance, commutativity of the square follows from commutativity of 
\[ \begin{tikzcd}
\medstar_v \rar["\iota_v"] \dar & G \dar \\
H_v \rar & G\{H_v\}
\end{tikzcd} \] in $\egc$.
\end{proof}

\begin{lemma}
\label{lemma reverse direction of nerve theorem}
Let $Y$ be a $\graphicalcat$-presheaf satisfying the Segal condition and let $\csmx$ be the modular operad associated to $Y$ (Remark~\ref{remark graphcat vs egc segal and associated} and Definition~\ref{def modular operad associated to X}).
There is a canonical bijection \[\begin{tikzcd} Y_G\arrow[r, "f_{G}"] & (N\csmx)_G\end{tikzcd} \] for every $G\in\graphicalcat$.
The map $f$ is a morphism of $\pregraphcat$.
\end{lemma}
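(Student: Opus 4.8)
The plan is to exploit that both $Y$ and $N\csmx$ satisfy the Segal condition---the former by hypothesis, the latter by Lemma~\ref{Lemma_nerve_statisfies_Segal}---so that each is determined by its values on the stars $\medstar_v$ and the exceptional edge $\exceptionaledge$, together with the maps between these induced by leg embeddings. First I would pin down the bijection on these elementary graphs. Writing $\fC = X_\exceptionaledge = Y_\exceptionaledge$, Example~\ref{example_trivial_graphical_modular} gives $(N\csmx)_\exceptionaledge = \csm(\modopgenned{\exceptionaledge},\csmx) = \fC = Y_\exceptionaledge$. For a star, Lemma~\ref{lemma data of map of graphical csm} expresses $(N\csmx)_{\medstar_S} = \csm(\modopgenned{\medstar_S},\csmx)$ as the disjoint union, over involutive $f_0 \colon 2S \to \fC$ (equivalently over $\xi = f_0|_S \colon S \to \fC$), of the sets $\csmx(S,\xi)$. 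By Definition~\ref{def csmx underlying object} and Remark~\ref{remark nary operations pullback} this disjoint union is exactly $X_{\medstar_S}$; since $X = \iota_*Y$ restricts to $Y$ on $\graphicalcat$ (Remark~\ref{remark graphcat vs egc segal and associated}), this yields a bijection $f_{\medstar_S} \colon Y_{\medstar_S} \xrightarrow{\cong} (N\csmx)_{\medstar_S}$. I would then check that $f_{\medstar_S}$ and $f_\exceptionaledge$ are compatible with the maps induced by the leg embeddings $\exceptionaledge \to \medstar_S$: both sides simply read off the color $\xi(s) \in \fC$, which is precisely the content of Remark~\ref{remark nary operations pullback}.

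Next, for $G \in \graphicalcat$ with at least one vertex, I would use Lemma~\ref{nerve layers as equalizer} to present $(N\csmx)_G$ as the equalizer of $\prod_v (N\csmx)_{\medstar_v} \rightrightarrows \prod_e (N\csmx)_\exceptionaledge$, and the Segal condition on $Y$ (via the coequalizer defining $\segalcore[G]$) to present $Y_G \cong Y_{\segalcore[G]}$ as the equalizer of $\prod_v Y_{\medstar_v} \rightrightarrows \prod_e Y_\exceptionaledge$. Because $\exceptionaledge$ has no vertices, both parallel pairs amount to color-matching conditions, and they are carried to one another by the bijections of the previous paragraph. Hence $\prod_v f_{\medstar_v}$ restricts to a bijection $f_G$ on equalizers; for $G = \exceptionaledge$ (the only vertexless object of $\graphicalcat$) we set $f_G = f_\exceptionaledge$. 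Unwinding, $f_G$ sends $y \in Y_G$ to the modular operad map whose color part $f_0$ assigns to an arc $a$ the image of $y$ under the arc embedding $\exceptionaledge \to G$ carrying $\edgemajor$ to $a$, and whose vertex part sends $v$ to the image of $y$ under $\iota_v^* \colon Y_G \to Y_{\medstar_v}$, which lands in $\csmx(i\nbhd(v), f_0|_{i\nbhd(v)})$ by construction.

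It remains to prove that $f = \{f_G\}$ is natural, and this is the main obstacle. Given $\varphi \colon H \to G$ with $H$ having a vertex (the case $H = \exceptionaledge$ being the color statement), the fact that $(N\csmx)_H$ embeds into $\prod_w (N\csmx)_{\medstar_w}$ via the $\iota_w^*$ means it suffices to verify $\iota_w^* f_H \varphi^* = \iota_w^* \varphi^* f_G$ for each vertex $w$ of $H$; using $\iota_w^* f_H = f_{\medstar_w}\iota_w^*$ and $\iota_w^*\varphi^* = (\varphi\iota_w)^*$, this reduces naturality to the case of a single map $\psi = \varphi\iota_w \colon \medstar_w \to G$ out of a star, namely $\psi^* f_G = f_{\medstar_w}\psi^*$. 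The color parts agree at once, since $\psi^*$ postcomposed with an arc embedding is again an arc embedding. For the vertex part I would invoke the orthogonal factorization system (active maps followed by embeddings) to write $\psi = \beta\alpha$ with $\alpha \colon \medstar_w \to K$ active and $\beta \colon K \to G$ an embedding. Compatibility with $\beta$ is straightforward, since embeddings act by restriction of decorations on both $Y$ (as a subobject of a product of star-values) and $N\csmx$ (via Definition~\ref{definition functor to CSM} and Remark~\ref{remark kleisli composition}).

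The crux is therefore compatibility with the active map $\alpha \colon \medstar_w \to K$, which encodes a single total composition in the modular operad. On the $N\csmx$ side this total composition is computed by the structure map $\gamma$ of $\csmx$, whereas on the $Y$ side it is the Segal identification $Y_K \cong Y_{\segalcore[K]}$ followed by restriction along the active map $\medstar_K \to K$; these are identified by the very definition of $\gamma$ (Definition~\ref{def of gamma restriction}, together with the associativity established in Proposition~\ref{prop modular operad associated to X}), which was built from the zig-zag $\csmx[K,\zeta] \subseteq X_{\segalcore[K]} \xleftarrow{\cong} X_K \to X_{\medstar_K}$. Tracing $y$ through both composites and appealing to the naturality of the Segal isomorphism then yields $\psi^* f_G = f_{\medstar_w}\psi^*$, completing the verification that $f$ is a morphism of $\pregraphcat$.
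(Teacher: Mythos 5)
Your proposal is correct, and its core coincides with the paper's argument: you identify $Y$ and $N\csmx$ on the exceptional edge and on stars (your computation $(N\csmx)_{\medstar_S}\cong\coprod_{\xi}\csmx(S,\xi)\cong Y_{\medstar_S}$ is exactly the paper's first step), check compatibility with the leg embeddings $\exceptionaledge\to\medstar_S$, and then transport the bijection to general $G$ through the Segal presentations of both sides. Your equalizer presentation via Lemma~\ref{nerve layers as equalizer} is the same datum as the paper's isomorphism between $\pregraphcat(\segalcore[G],-)$ and the equalizer of $\prod_v(-)_{\medstar_v}\rightrightarrows\prod_e(-)_{\exceptionaledge}$, so your $f_G$ agrees with the paper's map $\lim f$. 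Where you genuinely go beyond the paper is the last assertion, that $f$ is a morphism of $\pregraphcat$: the paper's proof stops after the levelwise construction and leaves naturality implicit, whereas you prove it, by reducing along the Segal monomorphism $(N\csmx)_H\hookrightarrow\prod_w(N\csmx)_{\medstar_w}$ to maps out of stars, factoring those as active followed by embedding, and then observing that naturality against an active map holds by the very definition of the structure map $\gamma$ (Definition~\ref{def of gamma restriction}), which was built as the Segal inclusion $\csmx[K,\zeta]\subseteq X_{\segalcore[K]}\cong X_K$ followed by restriction along the active map $\medstar_{\eth(K)}\to K$. This verification is worth having, since naturality is not formal here: graphical maps send vertices to embeddings rather than to vertices, so the Segal-core identification is not obviously functorial in $G$. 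The one step you should flesh out is the embedding case you call straightforward: for an embedding $\beta\colon K\to G$ and a vertex $u$ of $K$, the composite $\beta\iota_u$ equals $\iota_{\beta(u)}$ only up to an isomorphism of stars, so you also need the (easy, but not free) naturality of the star-level bijections with respect to isomorphisms in $\graphicalcat$.
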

\begin{proof}
By definition of $\csmx$, there exist bijections 
\[\begin{tikzcd}
Y_{\exceptionaledge} \arrow[r, "\cong"] & (N\csmx)_{\exceptionaledge}=\csm(\rgc[\exceptionaledge], \csmx) \end{tikzcd} \] 
and 
\[\begin{tikzcd}
Y_{\medstar_m} \arrow[r, "\cong"] & (N\csmx)_{\medstar_m}=\csm(\rgc[\medstar_m], \csmx)=X_{\medstar_m}
\end{tikzcd} \] 
which are compatible with the embeddings ${\exceptionaledge} \to \medstar_m$.

For a graph $G$ with at least one internal edge, the map $f_{G}$ is given by the composition 
\[ 
\begin{tikzcd}
\pregraphcat(\rgc[G], Y) \arrow[d, "\cong"] \arrow[r,  "="]& Y_G\arrow[r, "f_G"] & (N\csmx)_{G}\arrow[r, "="]& \pregraphcat(\rgc[G], N\csmx) \dar["\cong"] \\
 \pregraphcat(\segalcore[G], Y) \arrow[rrr, "\lim f", "\cong" swap]&& & \pregraphcat(\segalcore[G], N\csmx) 
\end{tikzcd}
\] 
where the vertical arrows are the Segal maps of Definition~\ref{definition_Segal_condition}, and the two bijections in the top come from the Yoneda Lemma.
The bottom isomorphism follows from the first paragraph, and the Segal map for the nerve $N\csmx$ is a bijection by Lemma~\ref{Lemma_nerve_statisfies_Segal}.
\end{proof}

\begin{proof}[Proof of Theorem~\ref{nerve_theorem}]
We've already shown that the nerve functor is fully faithful in Proposition~\ref{nerve_fully_faithful}.
Satisfying the Segal condition is preserved by isomorphism, so one direction follows immediately from Lemma~\ref{Lemma_nerve_statisfies_Segal}.
The other direction is Lemma~\ref{lemma reverse direction of nerve theorem}.
\end{proof}

We already know (by \cite[\ONEsegaltosegal]{modular_paper_one}) that the category of Segal $\egc$-presheaves is equivalent to the category of Segal $\graphicalcat$-presheaves.
This latter category is equivalent to $\csm$, but we can say a bit more.
The functor $\graphicalcat \to \csm$ from \S\ref{sections: graphical csm} extends to $\egc \to \csm$ by sending a nodeless loop, with arc set $A$, to the initial object of $\csm_A$. Note that this extended functor is not injective on isomorphism classes of objects, as it was in Proposition~\ref{proposition functor graphical to csm}: the exceptional edge maps to the same modular operad as in Example~\ref{example_trivial_graphical_modular}.
\begin{remark}\label{remark egc nerve theorem}
The analogue of Theorem~\ref{nerve_theorem} holds for the functor $\egc \to \csm$.
Temporarily write $N'$ for the associated functor from $\csm$ to $\egc$-presheaves.
We have $N= \iota^* N'$. 
As any nodeless loop and the exceptional edge produce the same modular operad, we can conclude (using also Lemma~\ref{Lemma_nerve_statisfies_Segal}) that $N'(P)$ is Segal.
This also shows that $N' = \iota_* N$ by \cite[\ONEsegaltosegal]{modular_paper_one} (or more precisely, that the unit $N'(Q) \to \iota_* \iota^* N'(Q)$ is an isomorphism for each $Q$).
We have
\begin{align*}
\hom(N'(P), N'(Q)) &= \hom(N'(P), \iota_* \iota^* N'(Q)) \\
&= \hom(\iota^*N'(P), \iota^* N'(Q)) \\
&= \hom(N(P), N(Q)) \\
&= \csm(P,Q).
\end{align*}
so we see that $N'$ is fully-faithful.
Finally, the construction in \S\ref{section mod op segal presheaf} was already phrased in terms of Segal $\egc$-presheaves and the proof of Lemma~\ref{lemma reverse direction of nerve theorem} holds for $N'$.
\end{remark}

\begin{remark}
There is a category of colored cyclic operads (see \cite{DrummondColeHackney:DKHCO,Shulman:2CDCPMA}) $\mathbf{Cyc}$, which can be defined using monads as in Section~\ref{subsection CSM}, except only using simply-connected graphs with nonempty boundary.
Let $\graphicalcat_{\cycrm}$ denote the full subcategory of $\graphicalcat$ on the simply-connected graphs with nonempty boundary \cite[\ONEsecsimplconn]{modular_paper_one}.
There is a forgetful functor $\csm \to \mathbf{Cyc}$, and the composite
\[
	\graphicalcat_{\cycrm} \to \graphicalcat \to \csm \to \mathbf{Cyc}
\]
(where the middle functor is from Proposition~\ref{proposition functor graphical to csm}) is fully-faithful and injective on objects.
The reader should contrast this situation with Example~\ref{example about graphicalcat not being full} and \cite[Example 5.7]{hry_cyclic}.
We expect that $\graphicalcat_{\cycrm} \to \mathbf{Cyc}$ is thus amenable to the techniques of \cite{weber} and \cite{bmw}; in particular, the analogue of Theorem~\ref{nerve_theorem} may formally follow from Weber theory. 
\end{remark}

\section{The nerve theorem of Joyal and Kock}
\label{section: JK Nerve}

In Section~\ref{sections: graphical csm} we indicated how each graph determines a modular operad and that this constitutes a functor $J: \graphicalcat \to \csm$.
In fact, this factors as 
\[ \graphicalcat \xrightarrow{\iota} \jkgraphcat \xrightarrow{I} \csm,\]
where $\jkgraphcat$ (previously seen in Remark~\ref{refinement remark}) is the category of Feynman graphs of Joyal and Kock.
The latter functor in this composition appeared in \cite{JOYAL2011105}, though its existence shouldn't be surprising: 
Remark~\ref{remark embeddings} extends to \'etale maps, that is, every \'etale map $f: K \to G$ determines an element of $\modopgenned{G}(\eth(K),f|_{\eth(K)})$\footnote{Note, though, that $f|_{\eth(K)}$ need not be injective when $f$ is not an embedding. This implies that we cannot make the same choices we made in Remark~\ref{remark embeddings} for \'etale maps.}.

On page 112 of \cite{JOYAL2011105}, the following theorem is announced.
Details were promised in a forthcoming manuscript, which has not appeared in the intervening eight years.

\begin{theorem}[Joyal and Kock]
\label{jk nerve theorem}
The functor $I: \jkgraphcat \to \csm$ induces a fully faithful functor
\[
	\jknerve : \csm \to \prejkgraphcat
\]
where $\jknerve(P) = \csm(I(-), P)$.
The essential image of $\jknerve$ is characterized by the Segal condition.
\end{theorem}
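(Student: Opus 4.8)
The plan is to deduce Theorem~\ref{jk nerve theorem} from Theorem~\ref{nerve_theorem} by passing through the restriction functor along $\iota \colon \graphicalcat \to \jkgraphcat$. Since $J = I \circ \iota$, the Yoneda lemma gives $N = \iota^* \circ \jknerve$, where $\iota^* \colon \prejkgraphcat \to \pregraphcat$ is restriction. I would first record that $\jknerve(P)$ satisfies the Segal condition for every modular operad $P$. Indeed, $\iota$ is a bijection on objects, so the Segal core of $G$ in $\prejkgraphcat$ is assembled from the same stars $\medstar_v$ and exceptional edges $\exceptionaledge$, glued along the embeddings $\outeredge_e$, $\inneredge_e$, and $\iota_v$, all of which are morphisms of $\graphicalcat$. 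Consequently the Segal map at $G$ for $\jknerve(P)$ is literally the Segal map at $G$ for $N(P) = \iota^* \jknerve(P)$, which is a bijection by Lemma~\ref{Lemma_nerve_statisfies_Segal}; hence $\jknerve(P)$ is Segal.

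The technical heart is to prove that $\iota^*$ restricts to a fully faithful functor (in fact an equivalence onto its image) from Segal $\jkgraphcat$-presheaves to Segal $\graphicalcat$-presheaves. Faithfulness is routine: for a Segal presheaf $X$ the Segal map exhibits $X_G$ as a limit of a diagram whose entries are the sets $X_{\medstar_v}$ and $X_{\exceptionaledge}$ and whose arrows are induced by $\graphicalcat$-morphisms, so any natural transformation of Segal presheaves is determined by its components at stars and edges, and $\iota^*$ retains exactly this data. The fullness (and essential surjectivity) requires showing that a map of the underlying Segal $\graphicalcat$-presheaves extends uniquely to a map of $\jkgraphcat$-presheaves, i.e. that its induced action, defined componentwise through the Segal bijections at each $G$, is natural with respect to all morphisms of $\jkgraphcat$ and not merely those of $\graphicalcat$.

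With this lemma in hand the theorem follows formally. For full faithfulness of $\jknerve$: faithfulness is immediate because $\iota^* \jknerve = N$ is faithful; for fullness, a map $\alpha \colon \jknerve(P) \to \jknerve(Q)$ has $\iota^*\alpha = N(f)$ for a unique $f \colon P \to Q$ by fullness of $N$, and then $\iota^*(\jknerve f) = \iota^* \alpha$ forces $\jknerve f = \alpha$ since $\iota^*$ is faithful on Segal presheaves. For the essential image: $\jknerve(P)$ is Segal by the first paragraph, and conversely, if $X$ is a Segal $\jkgraphcat$-presheaf then $\iota^* X$ is a Segal $\graphicalcat$-presheaf, hence isomorphic to $N(P) = \iota^*\jknerve(P)$ for $P = \csmx_{\iota^* X}$ by Theorem~\ref{nerve_theorem}; since $\iota^*$ is fully faithful on Segal presheaves this isomorphism lifts to an isomorphism $X \cong \jknerve(P)$.

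The main obstacle is precisely the fullness claim in the second paragraph: because $\iota$ is not full (Example~\ref{example about graphicalcat not being full} exhibits \'etale maps of $\jkgraphcat$ that are not graphical maps), one must check that the componentwise extension respects the action of these extra morphisms. A convenient way to sidestep explicit naturality computations is to exploit that $I \colon \jkgraphcat \to \csm$ is a full subcategory inclusion, so that $\jknerve(P) = \csm(I(-),P)$ is automatically functorial over all of $\jkgraphcat$; thus the target $\jknerve(\csmx_{\iota^* X})$ of the comparison already carries the correct $\jkgraphcat$-action, and the task reduces to transporting it across the Segal identification supplied by Theorem~\ref{nerve_theorem}.
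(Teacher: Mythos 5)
Your argument splits into two halves, and they fare very differently. The full-faithfulness half is correct, and in fact takes a cleaner route than the paper: you use only the identity $N = \iota^*\jknerve$, fullness and faithfulness of $N$ (Proposition~\ref{nerve_fully_faithful}), and faithfulness of $\iota^*$ (which is automatic since $\iota$ is a bijection on objects, so $\iota^*$ does not forget any components). The paper instead invokes full faithfulness of $I : \jkgraphcat \to \csm$ (quoted from Joyal--Kock) together with a cited proposition from \cite{MR2366560}; your argument avoids both inputs. Likewise your first paragraph, showing $\jknerve(P)$ is Segal, matches the paper's Lemma~\ref{jk nerve is segal}.

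The essential-image half, however, has a genuine gap, and you have located it yourself: everything rests on the claim that $\iota^*$ is \emph{full} on Segal presheaves, which you state but never prove, and your proposed sidestep is circular. Given a Segal $X \in \prejkgraphcat$ and the isomorphism $g : \iota^*X \cong N(P) = \iota^*\jknerve(P)$ from Theorem~\ref{nerve_theorem}, transporting the $\jkgraphcat$-action of $\jknerve(P)$ across the bijections $g_G$ merely produces a $\jkgraphcat$-presheaf isomorphic to $\jknerve(P)$ \emph{by construction}; to conclude $X \cong \jknerve(P)$ you must show the transported action agrees with the action $X$ already has, and that is exactly the naturality statement against the extra morphisms of $\jkgraphcat$ that you set out to avoid. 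Note the asymmetry that makes this nontrivial: morphisms \emph{into} elementary graphs agree in the two categories, but morphisms \emph{out of} them do not (the collapse $\medstar_0 \to \exceptionaledge$ and composites with non-injective \'etale maps of Example~\ref{example about graphicalcat not being full} are $\jkgraphcat$-morphisms with no graphical counterpart), and the action of such morphisms---e.g.\ the contracted units $\jknerve(P)_{\exceptionaledge} \to \jknerve(P)_{\medstar_0}$---is not determined by the $\graphicalcat$-restriction through any formal Segal argument. The paper circumvents precisely this obstruction with the adjunction $\iota^* \dashv \iota_*$: the comparison map is taken to be the composite $X \to \iota_*\iota^*X \cong \iota_*N(P) \cong \jknerve(P)$ (Lemma~\ref{fully faithful and right kan extension}), which is a morphism of $\jkgraphcat$-presheaves automatically, being assembled from a unit and Kan extensions; one then only needs it to be invertible, which is checked on elementary graphs via the triangle identity and Lemma~\ref{lemma elem graphs} (this uses the easy direction, that $\jkgraphcat(H,K) = \graphicalcat(H,K)$ for $K$ elementary) and propagated to all graphs by Segality of both sides. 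Your proof needs either this device or an honest verification of naturality against the generic--free factorization of $\jkgraphcat$-morphisms, including the collapse maps; as written, neither is supplied.
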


The reader should note the similarities between this theorem and our Theorem~\ref{nerve_theorem}.
The purpose of the present section is to show how our nerve theorem implies that of Joyal and Kock.
This provides an independent proof of this theorem (whose original proof was never made public) using alternative techniques.
We would also like to point the reader to the thesis of Sophie Raynor \cite{Raynor:Thesis}, which takes a different approach to prove a related nerve theorem for modular operads.

The functor $\iota : \graphicalcat \to \jkgraphcat$ induces adjunctions
\begin{equation*}
\begin{tikzcd}[column sep=large]
		\prejkgraphcat \rar["\iota^*" description]  & \pregraphcat
	\lar[bend right=30, "\iota_!" swap] 
	\lar[bend left=30, "\iota_*"]  \end{tikzcd}
\end{equation*}
where $\iota_*$ (resp. $\iota_!$) is given by right (resp. left) Kan extension along $\iota^{\oprm} : \graphicalcat^{\oprm} \to \jkgraphcat^{\oprm}$.
The functor $\iota_!$ is the left adjoint of $\iota^*$, which in turn is the left adjoint of $\iota_*$.

The categories $\graphicalcat$ and $\jkgraphcat$ have the same set of objects, and by the Yoneda lemma the object $\iota_!\rgc[G]$ is the representable object $\jkgraphcat(-,G)$.
One can define the Segal condition via Segal cores exactly as in this paper, and then one would see that $\iota_! \segalcore[G] \to \iota_! \rgc[G]$ is the $\jkgraphcat$-analogue of the Segal core inclusion since $\iota_!$ is cocontinuous.
As the diagram
\begin{equation}
\label{adjunctions diagram}
\begin{tikzcd}
\pregraphcat(\rgc[G], \iota^* X) \rar \dar{\cong} & \pregraphcat(\segalcore[G], \iota^* X) \dar{\cong}\\
\prejkgraphcat(\iota_!\rgc[G], X) \rar &  \prejkgraphcat(\iota_!\segalcore[G], X)
\end{tikzcd} \end{equation}
is commutative for every $G$, we see that an object $X\in \prejkgraphcat$ satisfies the Segal condition if and only if $\iota^*X \in \pregraphcat$ satisfies the Segal condition.

\begin{lemma}\label{jk nerve is segal}
If $P$ is a modular operad, then $\jknerve(P) \in \prejkgraphcat$ satisfies the Segal condition.
\end{lemma}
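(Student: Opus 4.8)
The plan is to reduce this immediately to the nerve $N$ on $\graphicalcat$, exploiting the criterion for the Segal condition established just before the lemma. The key observation is that $\iota^* \jknerve(P)$ is nothing but $N(P)$. Indeed, since $J = I \circ \iota$ (as recorded at the start of this section), for every $G \in \graphicalcat$ we have
\[
	(\iota^* \jknerve(P))_G = \jknerve(P)_{\iota(G)} = \csm(I(\iota G), P) = \csm(J(G), P) = N(P)_G,
\]
and these identifications are clearly natural in $G$, so $\iota^* \jknerve(P) = N(P)$ as objects of $\pregraphcat$.

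With this in hand, the argument is a short formal chain. By Lemma~\ref{Lemma_nerve_statisfies_Segal}, the presheaf $N(P)$ satisfies the Segal condition. The commutative square \eqref{adjunctions diagram} shows that an object $X \in \prejkgraphcat$ satisfies the Segal condition if and only if $\iota^* X \in \pregraphcat$ does; applying this with $X = \jknerve(P)$ and using that $\iota^* \jknerve(P) = N(P)$ is Segal, we conclude that $\jknerve(P)$ satisfies the Segal condition.

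There is essentially no obstacle here beyond making the identification $\iota^* \jknerve(P) = N(P)$ precise; once that is checked, the statement follows purely formally from the transport of the Segal condition along $\iota^*$ via \eqref{adjunctions diagram} together with Lemma~\ref{Lemma_nerve_statisfies_Segal}. The only point requiring a moment of care is confirming that the naturality in the Segal-core inclusions is respected, which is exactly what \eqref{adjunctions diagram} encodes, since $\iota_!$ is cocontinuous and hence sends the Segal core inclusion $\segalcore[G] \hookrightarrow \rgc[G]$ to its $\jkgraphcat$-analogue $\iota_!\segalcore[G] \to \iota_!\rgc[G]$.
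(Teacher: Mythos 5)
Your proof is correct and follows essentially the same route as the paper: both arguments identify $\iota^*\jknerve(P)$ with $N(P)$ (which the paper asserts directly from $J = I\circ\iota$, as you spell out), invoke Lemma~\ref{Lemma_nerve_statisfies_Segal} for the Segal property of $N(P)$, and transport it back through the commutative square \eqref{adjunctions diagram}.
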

\begin{proof}
Consider the diagram \eqref{adjunctions diagram} for $X = \jknerve(P)$ and for an arbitrary graph $G$.
Since $N(P) \cong \iota^* \jknerve(P)$, we know that the top map is an isomorphism by Lemma~\ref{Lemma_nerve_statisfies_Segal}, hence so is the bottom map.
\end{proof}

\begin{lemma}
\label{fully faithful and right kan extension}
The functor $\jknerve : \csm \to \prejkgraphcat$ is fully faithful.
Furthermore, there is a natural isomorphism of functors $\jknerve \cong \iota_* N$.
\end{lemma}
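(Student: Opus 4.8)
The plan is to reduce both assertions to the single statement that, for every modular operad $P$, the unit
$\eta_P \colon \jknerve(P) \to \iota_* \iota^* \jknerve(P)$
of the adjunction $\iota^* \dashv \iota_*$ is an isomorphism. First I would record that $\iota^* \jknerve = N$: unwinding definitions, since $J = I\iota$ we have $(\iota^* \jknerve P)_G = \jknerve(P)_{\iota G} = \csm(I\iota G, P) = \csm(\modopgenned{G}, P) = NP_G$ for every $G \in \graphicalcat$, naturally in $G$. Thus the unit reads $\eta_P \colon \jknerve P \to \iota_* NP$, and once it is a natural isomorphism we obtain the promised $\jknerve \cong \iota_* N$ immediately. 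Full faithfulness then follows from the chain
\begin{align*}
\prejkgraphcat(\jknerve P, \jknerve Q)
  &\cong \prejkgraphcat(\jknerve P, \iota_* N Q) \\
  &\cong \pregraphcat(\iota^* \jknerve P, N Q)
   = \pregraphcat(N P, N Q) \cong \csm(P, Q),
\end{align*}
using $\eta_Q$ to rewrite the target, the adjunction $\iota^* \dashv \iota_*$, and the full faithfulness of $N$ from Theorem~\ref{nerve_theorem}.

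To prove $\eta_P$ is an isomorphism I would evaluate the right Kan extension $\iota_* NP$ pointwise. For a graph $G$, $(\iota_* NP)_G$ is the limit of the values $NP_H = \csm(\modopgenned{H}, P)$ indexed by the comma category whose objects are Joyal--Kock maps $\alpha \colon \iota H \to G$ with $H \in \graphicalcat$. Because $\csm(-,P)$ carries colimits to limits, this limit is $\csm\bigl(\colim_\alpha \modopgenned{H},\, P\bigr)$, and under this identification $\eta_{P,G}$ is precisely $\csm(c_G, P)$, where $c_G \colon \colim_\alpha \modopgenned{H} \to \modopgenned{G} = IG$ is the canonical map out of the colimit assembled from the morphisms $I\alpha$. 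By the Yoneda lemma, then, $\eta_{P}$ is an isomorphism for all $P$ if and only if $c_G$ is an isomorphism for every $G$, i.e.\ if and only if $\modopgenned{G}$ is the colimit over this comma category of the modular operads $\modopgenned{H}$.

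The step I expect to be the main obstacle is exactly this colimit computation. Because $\iota \colon \graphicalcat \to \jkgraphcat$ is faithful but \emph{not} full (Example~\ref{example about graphicalcat not being full}: étale maps need not be embeddings, and collapses are permitted), the indexing comma category contains many non-graphical objects, so I cannot simply invoke a counit isomorphism $\iota^*\iota_* \cong \id$. The crux is to show that the subdiagram consisting of the vertex-stars $\iota_v \colon \medstar_v \hookrightarrow G$ together with the internal edges $\exceptionaledge \to \medstar_v$ (all of which are embeddings, hence graphical) is final in this comma category, so that the colimit collapses to the modular-operad Segal core $\colim\bigl(\coprod_v \modopgenned{\medstar_v} \leftleftarrows \coprod_e \modopgenned{\exceptionaledge}\bigr)$. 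This presentation should hold because $\modopgenned{G}$ is freely generated by its vertices; it is the modular-operad avatar of the equalizer description of $NP_G$ in Lemma~\ref{nerve layers as equalizer}, and indeed $\csm(\colim_v\modopgenned{\medstar_v}\leftleftarrows,\,P)$ recovers that equalizer, which is $NP_G$. Verifying this finality carefully — accounting for automorphisms of the stars, for the collapse behaviour, and for nodeless loops (as in the extension discussed around Remark~\ref{remark egc nerve theorem}) — is the technical heart of the argument, after which $c_G$ is an isomorphism and both claims of the lemma follow.
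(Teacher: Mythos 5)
Your formal reductions are sound: the identification $\iota^*\jknerve = N$, the deduction of full faithfulness from the unit $\jknerve(P) \to \iota_*\iota^*\jknerve(P)$ being an isomorphism, and the pointwise-Kan-extension-plus-Yoneda reduction to the statement that the canonical map $c_G \colon \colim_{(H,\alpha)} \modopgenned{H} \to \modopgenned{G}$ is an isomorphism (using cocompleteness of $\csm$) are all correct. The gap is in the step you yourself flag as the technical heart, and it is not merely unverified but false as stated: the stars-and-edges subdiagram is \emph{not} final in the comma category. Finality for colimits requires that every object admit a morphism \emph{into} the subdiagram, with connected category of such morphisms. But a morphism in your indexing category from $(H, \alpha\colon \iota H \to G)$ to $(\medstar_v, \iota_v)$ is a graphical map $g \colon H \to \medstar_v$ with $\iota_v \circ \iota g = \alpha$. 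Take the object $(G, \id_G)$ with $G$ having at least two vertices: such a $g$ would exhibit $\id_G$ as factoring through $\medstar_v$ in $\jkgraphcat \subseteq \csm$, which is impossible, since the composite $I\iota_v \circ Ig$ lands in the image of $I\iota_v \colon \modopgenned{\medstar_v} \to \modopgenned{G}$, and that image misses every generator $w \neq v$ of the free modular operad $\modopgenned{G}$, whereas the identity does not. So the relevant comma categories are empty and no cofinality argument can collapse the colimit onto the Segal core diagram; the canonical maps in your indexing category run \emph{out of} the stars into the other objects, which is the wrong direction both for finality (colimits) and for initiality (the limit description of $\iota_*$). Note that the truth of ``$c_G$ is an isomorphism'' is, by your own Yoneda reduction, \emph{equivalent} to the lemma, so deferring it leaves the entire content unproved.

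What your outline never uses is the one essential non-formal input: full faithfulness of $I \colon \jkgraphcat \to \csm$, cited by the paper from \cite[\S 6]{JOYAL2011105}. Your comma category contains objects $(\medstar_w, h)$ for \emph{arbitrary} Joyal--Kock maps $h$ from stars into $G$; these correspond to arbitrary elements of the modular operad $\modopgenned{G}$, and any identification of the comma-category colimit must control exactly these components --- that is where fullness of $I$ has to enter, and no manipulation purely internal to $\graphicalcat$ and its Segal cores (such as Lemma~\ref{nerve layers as equalizer}, which only yields the coequalizer presentation $\coprod_e \modopgenned{\exceptionaledge} \rightrightarrows \coprod_v \modopgenned{\medstar_v} \to \modopgenned{G}$) can substitute for it. The paper's own proof bypasses the colimit computation entirely: it records that $I$ (by \cite[\S 6]{JOYAL2011105}) and $N$ (by Proposition~\ref{nerve_fully_faithful}) are fully faithful, and then invokes the formal Proposition 1.1 of \cite{MR2366560}, which yields both full faithfulness of $\jknerve$ and the isomorphism $\jknerve \cong \iota_* N$ at once. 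If you want a hands-on argument along your lines, you would in effect be reproving that proposition, and the place where you must handle non-graphical maps out of elementary graphs is precisely where the Joyal--Kock full-faithfulness result is unavoidable.
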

\begin{proof}
Consider the composition of functors
\[
	\graphicalcat \xrightarrow{\iota} \jkgraphcat \xrightarrow{I} \csm.
\]
The following two functors are fully faithful:
\begin{itemize}
	\item the functor $I : \jkgraphcat \to \csm$ (see \cite[\S 6]{JOYAL2011105}), and
	\item the functor $N(-) = \csm(I\iota,-)$ from $\csm$ to $\pregraphcat$ (Proposition~\ref{nerve_fully_faithful}).
\end{itemize}
Both statements of the lemma are then consequences of \cite[Proposition 1.1]{MR2366560}.
\end{proof}

We say that a graph is \emph{elementary} if it is isomorphic to either the exceptional edge $\exceptionaledge$ or to a star $\medstar_n$.

For the proof of the following lemma, it is convenient to utilize the pointwise description of right Kan extension (see, for instance, Theorem 1 of \cite[X.3]{maclane}).
Recall that if $Y\in \pregraphcat$, then
\[
	(\iota_*Y)_G = \lim_{\substack{G\downarrow \iota^{\oprm} \\ G \to H}} Y_H
\]
where $G\downarrow \iota^{\oprm}$ has objects $\jkgraphcat^{\oprm}(G,H) = \jkgraphcat(H,G)$ as $H$ varies, and morphisms from $G \to H$ to $G \to H'$ are those maps in $\graphicalcat^{\oprm}(H,H') = \graphicalcat(H',H)$ making the diagram 
\[ \begin{tikzcd}
G \rar \dar & H  \arrow[dl] \\
H' & 
\end{tikzcd} \]
commute in $\jkgraphcat^{\oprm}$.

\begin{lemma}
\label{lemma elem graphs}
The counit $\iota^*\iota_* \Rightarrow \id$ of the adjunction $\iota^* \dashv \iota_*$
is an isomorphism on each elementary graph.
In other words, for each presheaf $Y\in \pregraphcat$ and each elementary graph $K$, we have $(\iota_* Y)_K \cong Y_K$.
Furthermore, if $K$ and $K'$ are elementary graphs and $f \in \jkgraphcat(K,K')$, then $f\in \graphicalcat(K,K')$ and $(\iota_*Y)_f \cong Y_f$.
\end{lemma}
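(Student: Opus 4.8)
The plan is to run everything through the pointwise formula for $\iota_*$ recalled just before the statement. Fix $K\in\graphicalcat$ and write $\mathcal{J}=K\downarrow\iota^{\oprm}$, so that the objects of $\mathcal{J}$ are the $\jkgraphcat$-morphisms $\psi\colon H\to K$ (with $H\in\graphicalcat$) and the morphisms $\psi\to\psi'$ are the graphical maps over $K$; by construction $(\iota_*Y)_K=\lim_{\mathcal{J}}Y$, and the component of the counit at $K$ is exactly the leg of the limit cone at the object $\id_K$. I would therefore prove that $\id_K$ is an \emph{initial} object of $\mathcal{J}$ whenever $K$ is elementary. Since a limit over a category with an initial object is computed by evaluation at that object, this simultaneously yields a canonical bijection $(\iota_*Y)_K\cong Y_K$ and identifies it with the counit. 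Unwinding the definition of $\mathcal{J}$, there is a (necessarily unique) morphism $\id_K\to(\psi\colon H\to K)$ precisely when $\psi$ is itself graphical; hence the first assertion reduces entirely to the statement that \emph{every $\jkgraphcat$-morphism whose codomain is elementary already lies in $\graphicalcat$}.

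To prove this I would use that $I\colon\jkgraphcat\to\csm$ is fully faithful, so that $\psi$ is the same datum as a map $\modopgenned{H}\to\modopgenned{K}$, i.e.\ (by Lemma~\ref{lemma data of map of graphical csm} together with the extension of Remark~\ref{remark embeddings} to \'etale maps) an involutive color map together with an \'etale map $\psi_1(v)$ into $K$ for each vertex $v$ of $H$. The comparison with Definition~\ref{graphical category definition} then has three moving parts. First, when $K=\exceptionaledge$ the absence of vertices forces $H$ to be vertex-free, whence $H=\exceptionaledge$ and $\psi$ is an isomorphism, which is graphical. Second, when $K=\medstar_n$ I would show that every connected \'etale map into $\medstar_n$ is an embedding: an internal edge or loop in its source consists of an arc $a$ and its involute $ia$, both lying in $D$ (that is, at vertices), so \'etaleness forces their images into the neighborhood $\nbhd(v_0)$ of the unique vertex $v_0$ of $\medstar_n$; but the image of $ia$ is the involute of the image of $a$, and the involute of an arc in $\nbhd(v_0)$ is a boundary arc of $\medstar_n$, not in $\nbhd(v_0)$ — a contradiction. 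Thus the source has no internal edges, so it is an edge or a copy of $\medstar_n$, each $\psi_1(v)$ is an embedding, and condition \eqref{graphical map defn: boundary compatibility} is met. Third, I would check that the two conditions separating $\graphicalcat$ from $\jkgraphcat$ hold automatically for an elementary codomain: condition \eqref{graphical map defn: no double vertex covering} because a connected source cannot cover the single vertex of $\medstar_n$ twice without producing a forbidden internal edge, and condition \eqref{graphical map defn: collapse condition} because a boundary-free source all of whose vertices map to edges would have a nodeless loop as its image, which an elementary graph is not.

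With the main claim in hand the \emph{furthermore} clause is short. If $K,K'$ are elementary and $f\in\jkgraphcat(K,K')$, then $f$ is graphical because its codomain $K'$ is elementary. The two bijections $(\iota_*Y)_K\cong Y_K$ and $(\iota_*Y)_{K'}\cong Y_{K'}$ are the components of the counit $\iota^*\iota_*Y\Rightarrow Y$, a natural transformation of $\graphicalcat$-presheaves; naturality along the graphical map $f$ then conjugates the structure map $(\iota_*Y)_f$ to $Y_f$, giving $(\iota_*Y)_f\cong Y_f$. I expect the genuinely delicate step to be the third one above — verifying the collapse condition \eqref{graphical map defn: collapse condition}, together with the no-internal-edge analysis that excludes non-embedding \'etale pieces — since this is exactly where the refinement of $\graphicalcat$ over $\jkgraphcat$ is felt and where one must argue carefully about total collapses and nodeless loops.
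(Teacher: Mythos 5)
Your proposal follows exactly the paper's strategy: both use the pointwise formula for $\iota_*$, reduce the lemma to showing that $\id_K^{\oprm}$ is initial in $K\downarrow\iota^{\oprm}$, observe that this initiality amounts to the assertion that every $\jkgraphcat$-morphism with elementary codomain lies in $\graphicalcat$, and obtain the final sentence from naturality. The difference is that the paper asserts the hom-set equality $\jkgraphcat(H,K)=\graphicalcat(H,K)$ in a single sentence, whereas you attempt to prove it --- and your proof of that claim is where the proposal breaks down.

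First, in your case $K=\exceptionaledge$, the inference ``$K$ has no vertices, hence $H$ is vertex-free'' is false. A $\jkgraphcat$-morphism sends each \emph{vertex} of $H$ to an \'etale map into $K$; when $K=\exceptionaledge$ this forces the sources of those \'etale maps to be vertex-free, but says nothing of the sort about $H$. For example, there is a perfectly good graphical map $\medstar_2\to\exceptionaledge$ sending the unique vertex to the edge (condition \eqref{graphical map defn: collapse condition} of Definition~\ref{graphical category definition} is vacuous here since $\eth(\medstar_2)\neq\varnothing$), and more generally every linear graph maps to $\exceptionaledge$ in both categories; your argument would prove these nonempty hom-sets are empty. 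Moreover, your own starting point --- identifying $\jkgraphcat(H,K)$ with $\csm(\modopgenned{H},\modopgenned{K})$ via full faithfulness of $I$ --- contradicts your conclusion in this case: Example~\ref{example about graphicalcat not being full} exhibits a map $\modopgenned{\medstar_0}\to\modopgenned{\exceptionaledge}$, so under that identification $\jkgraphcat(\medstar_0,\exceptionaledge)\neq\varnothing$, yet $\medstar_0\not\cong\exceptionaledge$ and $\graphicalcat(\medstar_0,\exceptionaledge)=\varnothing$.

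Second, and more seriously, your claim that condition \eqref{graphical map defn: collapse condition} ``holds automatically'' for an elementary codomain is not correct, and this is precisely the delicate point. Let $G$ be the loop with one bivalent vertex $v$ (so $\eth(G)=\varnothing$) and let $n\geq 1$. Take $\psi_0$ to be the involutive map carrying the two arcs of $G$ to $\{1,1^\dagger\}\subseteq A(\medstar_n)$, and take $\psi_1(v)$ to be the embedding $\exceptionaledge\hookrightarrow\medstar_n$ onto the first edge. According to the description of $\jkgraphcat$ in Remark~\ref{refinement remark}, this is a morphism $G\to\medstar_n$ of $\jkgraphcat$: the vertex image is \'etale (indeed an embedding), the boundary compatibility is an honest bijection here, and conditions \eqref{graphical map defn: no double vertex covering} and \eqref{graphical map defn: collapse condition} are simply not imposed in $\jkgraphcat$. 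Its image is the edge $\{1,1^\dagger\}$ of $\medstar_n$ --- no nodeless loop occurs anywhere --- so the obstruction you invoke does not arise; and the morphism fails to be graphical exactly because it violates \eqref{graphical map defn: collapse condition}. The same total-collapse phenomenon gives $\jkgraphcat$-morphisms from cycles onto $\exceptionaledge$. These morphisms are objects of $K\downarrow\iota^{\oprm}$ receiving no map from $\id_K$, so by your own (correct) criterion the initiality you need fails; no sharper case analysis will exclude them. The parts of your argument that do work are the \'etale-implies-embedding analysis for stars, the automatic verification of \eqref{graphical map defn: no double vertex covering}, and the closing naturality argument; but the central claim is not established, and the counterexamples above show that it is genuinely problematic --- this is also the one point that the paper's own one-sentence justification passes over in silence, so the difficulty your attempt runs into is a real one rather than a defect of your overall plan.
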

\begin{proof}
If $K$ is an elementary graph, then $\jkgraphcat(H,K) = \graphicalcat(H,K)$.
This implies that the object $\id_K^{\oprm}$ is initial in the category $K\downarrow \iota^{\oprm}$.
Thus the inclusion $\{ \id_K^{\oprm} \} \hookrightarrow K\downarrow \iota^{\oprm}$ induces an isomorphism
\[
	(\iota_*Y)_K = \lim_{K\downarrow \iota^{\oprm}} Y_H \to \lim_{\{ \id_K^{\oprm} \}} Y_K = Y_K.
\]
This proves the first statement.
The final sentence of the lemma follows immediately from naturality and the fact that $\jkgraphcat(K,K') = \graphicalcat(K,K')$.
\end{proof}

\begin{lemma}
\label{jk segal implies nerve}
If $X\in \prejkgraphcat$ satisfies the Segal condition, then $X\cong \jknerve(P)$ for some $P\in \csm$.
\end{lemma}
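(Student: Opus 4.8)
The plan is to deduce this from the reverse direction of Theorem~\ref{nerve_theorem} together with the identification $\jknerve \cong \iota_* N$ of Lemma~\ref{fully faithful and right kan extension}, and then to check that the unit of the adjunction $\iota^* \dashv \iota_*$ is an isomorphism on $X$.

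First I would restrict. Since $X$ satisfies the Segal condition, the commutative square \eqref{adjunctions diagram} shows that $\iota^* X \in \pregraphcat$ also satisfies the Segal condition. By Theorem~\ref{nerve_theorem} (concretely, by Lemma~\ref{lemma reverse direction of nerve theorem}) there is a modular operad $P$, namely the one associated to $\iota^* X$, and an isomorphism $\iota^* X \cong N(P)$. Applying $\iota_*$ and invoking Lemma~\ref{fully faithful and right kan extension} gives $\iota_* \iota^* X \cong \iota_* N(P) \cong \jknerve(P)$; in particular $\iota_*\iota^* X$ is a Segal $\jkgraphcat$-presheaf by Lemma~\ref{jk nerve is segal}. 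It therefore suffices to prove that the unit $\eta_X \colon X \to \iota_*\iota^* X$ of $\iota^* \dashv \iota_*$ is an isomorphism.

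I would establish this in two steps. On elementary graphs, the triangle identity $\varepsilon_{\iota^* X} \circ \iota^*(\eta_X) = \id_{\iota^* X}$ holds, where $\varepsilon$ is the counit $\iota^*\iota_* \Rightarrow \id$. By Lemma~\ref{lemma elem graphs}, $\varepsilon_{\iota^* X}$ is an isomorphism at every elementary graph $K$; since $\iota^*$ does not alter the value of a presheaf on an object, $(\iota^*\eta_X)_K = (\eta_X)_K$, and the triangle identity forces $(\eta_X)_K$ to be the inverse of $(\varepsilon_{\iota^* X})_K$, hence an isomorphism, for every elementary $K$. For a general graph $G$, both $X$ and $\iota_*\iota^* X$ are Segal, so each value at $G$ is recovered through the natural Segal bijection $(-)_G \cong \prejkgraphcat(\iota_!\segalcore[G], -)$. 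As $\iota_!$ is cocontinuous, $\iota_!\segalcore[G]$ is the coequalizer of coproducts of the representables $\iota_!\rgc[\medstar_v]$ and $\iota_!\rgc[\exceptionaledge]$, so $\prejkgraphcat(\iota_!\segalcore[G], -)$ is an equalizer of products of evaluations at elementary graphs. Since $\eta_X$ is an isomorphism on each of these evaluations, it induces an isomorphism of the corresponding equalizers, and naturality of the Segal maps then gives that $(\eta_X)_G$ is an isomorphism. Hence $\eta_X$ is an isomorphism and $X \cong \iota_*\iota^* X \cong \jknerve(P)$.

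I expect the second step to be the main obstacle: bootstrapping from elementary graphs to all graphs. The triangle-identity argument only controls $\eta_X$ on elementary graphs, and it is precisely the Segal condition on both $X$ and $\iota_*\iota^* X$ that pins down their values at an arbitrary $G$ as a limit built from the elementary pieces of the Segal core; without it there would be no reason for the unit to be invertible (in general $\iota_*\iota^* X \not\cong X$). Some care is also needed to confirm that $\iota_!\segalcore[G]$ really is a colimit of elementary representables matching the Segal-core coequalizer, so that $\prejkgraphcat(\iota_!\segalcore[G], -)$ genuinely reduces to evaluations on elementary graphs.
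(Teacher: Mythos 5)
Your proposal is correct and follows essentially the same route as the paper's own proof: restrict along $\iota$, invoke Theorem~\ref{nerve_theorem} to get $\iota^*X \cong N(P)$, identify $\iota_*N(P)$ with $\jknerve(P)$ via Lemma~\ref{fully faithful and right kan extension}, prove the unit is an isomorphism on elementary graphs using the triangle identity and Lemma~\ref{lemma elem graphs}, and then use the Segal condition on both sides to promote this to all graphs. Your elaboration of the final bootstrap step (reducing $\prejkgraphcat(\iota_!\segalcore[G],-)$ to an equalizer of evaluations at elementary graphs via cocontinuity of $\iota_!$) is exactly the argument the paper leaves implicit in its closing sentence.
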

\begin{proof}
As mentioned above, $X$ satisfying the Segal condition is equivalent to $\iota^*X$ satisfying the Segal condition by the square \eqref{adjunctions diagram}.
Since $\iota^*X \in \pregraphcat$ is Segal, there exists a $P\in \csm$ and an isomorphism $\iota^* X \xrightarrow{\cong} N(P)$ by Theorem~\ref{nerve_theorem}.
We thus have an isomorphism 
\[ \iota_* \iota^* X \xrightarrow{\cong} \iota_* N(P)
\]
after right Kan extension; by Lemma~\ref{fully faithful and right kan extension} we know $\jknerve(P) \cong \iota_* N(P)$.

Write $f$ for the composite
\[
	X \to \iota_* \iota^* X \xrightarrow{\cong} \jknerve(P)
\]
where the first map is the unit of the adjunction $\iota^* \dashv \iota_*$.
We claim that for each elementary graph $K$, the first map $X_K \to (\iota_* \iota^* X)_K$ is an isomorphism. 
Indeed, this map is the first map in the composite
\[
	(\iota^*X)_K \to (\iota^*\iota_*\iota^*X)_K \to (\iota^*X)_K
\]
which is the identity function by one of the triangle identities for an adjunction (see Theorem 1(ii)(8) in \cite[XI.1, p.82]{maclane}).
We showed that the second map $(\iota^*\iota_*\iota^*X)_K \to (\iota^*X)_K$ was an isomorphism in Lemma~\ref{lemma elem graphs}, so the claim follows.

We now know that morphism $f$ of $\prejkgraphcat$ has the property that $f_K : X_K \to \jknerve(P)_K$ is an isomorphism for every elementary graph $K$.
Since both $X$ and $\jknerve(P)$ are Segal, this implies that $f$ is an isomorphism.
\end{proof}

\begin{proof}[Proof of Theorem~\ref{jk nerve theorem}]
By Lemma~\ref{fully faithful and right kan extension}, we know that $\jknerve$ is fully faithful.
As satisfying the Segal condition is invariant under isomorphism, we know by Lemma~\ref{jk nerve is segal} that every $X$ in the essential image of $\jknerve$ satisfies the Segal condition.
Lemma~\ref{jk segal implies nerve} provides the reverse containment.
\end{proof}

In this section we showed that Theorem~\ref{nerve_theorem} implies Theorem~\ref{jk nerve theorem}. 
This implication was mostly formal, relying on that fact that $\iota$ is a bijection on objects, the coincidence of the subcategories of elementary graphs, and fully faithfulness of $I: \jkgraphcat \to \csm$.
As there is no backwards version of Proposition 1.1 of \cite{MR2366560}, it seems unlikely that one can recover Theorem~\ref{nerve_theorem} from Theorem~\ref{jk nerve theorem}.

\bibliographystyle{amsalpha}
\bibliography{modular}
\end{document}